\newif\ifPDF
\newtheorem{theorem}{Theorem}[section]
\newtheorem{lemma}[theorem]{Lemma}
\newtheorem{remark}[theorem]{Remark} 
\newtheorem{corollary}[theorem]{Corollary}
\newcommand{\diag}{{\rm diag}}
\newcommand{\eps}{\varepsilon}
\newcommand{\sfc}{\mathsf c}
\newcommand{\sfC}{\mathsf C}
\newcommand{\bbR}{\mathbb R} \newcommand{\bbS}{\mathbb S}
\newcommand{\bzero}{{\mathbf 0}}
\newcommand{\bnu}{{\boldsymbol \nu}}
\newcommand{\bxi}{\boldsymbol \xi}
\newcommand{\bk}{\mathbf k} 
 \newcommand{\bn}{\mathbf n}
 \newcommand{\bp}{\mathbf p}
\newcommand{\bq}{\mathbf q} 
\newcommand{\bs}{\mathbf s} 
\newcommand{\bu}{\mathbf u} \newcommand{\bv}{\mathbf v} 
 \newcommand{\bx}{\mathbf x} 
 \newcommand{\bz}{\mathbf z}
 \newcommand{\bP}{\mathbf P} 
\newcommand{\bQ}{\mathbf Q}
\newcommand{\cA}{\mathcal A} 
 \newcommand{\cF}{\mathcal F}
 \newcommand{\cH}{\mathcal H}
 \newcommand{\cL}{\mathcal L}
\newcommand{\cO}{\mathcal O}
 \newcommand{\cV}{\mathcal V}
 \newcommand{\cX}{\mathcal X} 
\newcommand{\cY}{\mathcal Y} 
\newcommand{\hN}{\frac{(N+1)}{2}}
\DeclareMathOperator*{\argmin}{arg\,min}
\newenvironment{keywords}
{\noindent{\bf Key words.}\small}{\par\vspace{1ex}}
\newcommand{\note}[1]{#1}
\title{Quantitative PAT with simplified $P_N$ approximation}
\author{
    Hongkai Zhao\thanks{
            Department of Mathematics, Duke University, NC 27705, zhao@math.duke.edu .
    }
    \and
    Yimin Zhong\thanks{
    		Department of Mathematics, Duke University, NC 27705, yimin.zhong@duke.edu .
	}
}
\date{}
\begin{document}

\maketitle



\begin{abstract}
    The photoacoustic tomography (PAT) is a hybrid modality that combines the optics and acoustics to obtain high resolution and high contrast imaging of heterogeneous media. In this work, our objective is to study the inverse problem in the quantitative step of PAT which aims to reconstruct the optical coefficients of the governing radiative transport equation from the ultrasound measurements. In our analysis, we take the simplified $P_N$ approximation of the radiative transport equation as the physical model and then show the uniqueness and stability for this modified inverse problem. Numerical simulations based on synthetic data are presented to validate our analysis.
\end{abstract}


\begin{keywords}
photoacoustic tomography (PAT), radiative transport equation, simplified $P_N$ method, diffusion approximation, numerical reconstruction
\end{keywords}




\section{Introduction}
\label{SEC:Intro} 
The photoacoustic tomography (PAT)~\cite{bal2013hybrid,beard2011biomedical,cox2009challenges,kuchment2007mathematics,wang2017photoacoustic,scherzer2010handbook, ren2015inverse, ren2013quantitative} is an emerging hybrid imaging modality that reconstructs high resolution images of optical properties of heterogeneous media. The PAT experiment uses a pulse of near-infra-red (NIR) laser into the medium of interest (e.g. fat, bone, tumor tissues). These photons propagate inside the medium  by following the radiative transport process.  During the propagation, a portion of the photons is absorbed by the medium and then converted into heat which causes a local thermoelastic expansion. Such expansion induces a transient pressure change and leads to the propagation of ultrasound. The ultrasound signals are measured around the boundary of the medium and we need to infer the optical properties from the acoustic measurements.

The photon transport process is usually modeled by radiative transport equation. Let $X =\Omega\times\bbS^2$, where $\Omega$ is the physical domain and $\bbS^2$ denotes the unit sphere in 3D, the photon density function $u(\bx ,\bv)$ satisfies the following
\begin{equation}\label{EQ:RTE}
\begin{aligned}
\bv \cdot \nabla u(\bx, \bv) + \sigma_t(\bx) u(\bx, \bv) &=\sigma_s(\bx) \int_{\bbS^2} p(\bv\cdot\bv') u(\bx, \bv') d\bv'\quad&\text{ in }&X\,, \\ 
u(\bx, \bv) &= f(\bx, \bv)\quad &\text{ on }&\Gamma_{-}\,,
\end{aligned}
\end{equation}
where $\Gamma_{-} = \{ (\bx, \bv)\in \partial\Omega\times\bbS^{d-1}\mid -\bnu(\bx)\cdot \bv > 0\}$ is the incoming boundary set.  $\sigma_s, \sigma_t$ are the scattering and total absorption coefficients respectively, $\sigma_a:= \sigma_t - \sigma_s$ is the intrinsic absorption coefficient. $f(\bx, \bv)$ is the external illumination source. The scattering phase function $p(\bv\cdot\bv')$ is usually chosen as the Henyey-Greenstein function
\begin{equation}
p(\cos\theta) = \frac{1}{4\pi} \frac{1-g^2}{(1 + g^2 - 2g\cos\theta)^{3/2}}\,,
\end{equation}
where $g\in (-1,1)$ is the anisotropy parameter. 

The energy absorbed by the medium is $\sigma_a \int_{\bbS^2} u(\bx, \bv) d\bv$, then the initial pressure field generated by the \emph{photoacoustic effect} is:
\begin{equation}
    H(\bx) := \Upsilon(\bx) \sigma_a(\bx) \int_{\bbS^2} u(\bx, \bv) d\bv,
\end{equation}
where $\Upsilon(\bx)$ is the dimensionless Gr\"uneisen coefficient which measures the efficiency of the photoacoustic effect.

Then the initial pressure field $H(\bx)$ propagates the ultrasound wave, which satisfies the following equation~\cite{stefanov2009thermoacoustic}:
\begin{equation}
    \begin{aligned}
        \frac{1}{c^2(\bx)} \frac{\partial^2 p(\bx, t)}{\partial t^2} - \Delta_{\bx} p(\bx, t) &= 0,\quad &\text{ in }& \bbR^3\times [0, \infty),\\
        p(\bx, 0) = H(\bx),\quad \frac{\partial p}{\partial t}(\bx, 0) &= 0, \quad &\text{ in }&\bbR^3.
    \end{aligned}
\end{equation}
Here $c(\bx)$ is the wave speed of the underlying medium. The measured acoustic signals are $p(\bx ,t)$ on $\partial\Omega\times [0, T]$ for sufficient large observation time $T$. 

The usual reconstruction of PAT is a two-step process. The first step is to reconstruct the initial pressure field $H(\bx)$ from the ultrasound measurements. This problem has been studied extensively by~\cite{agranovsky2007reconstruction,ammari2012photoacoustic,stefanov2009thermoacoustic,haltmeier2005filtered,hristova2009time} and the references therein. Here we assume this step has been finished and recovered the initial pressure field $H(\bx)$ and we focus on the second step to reconstruct the optical properties $(\sigma_a, \sigma_s, \Upsilon)$ from the quantity $H(\bx)$. Under the diffusion approximation, this quantitative PAT (qPAT) problem has been well studied~\cite{ding2015one,ren2013hybrid,bal2010inverse,bal2011multi}. However, with the radiative transport equation~\eqref{EQ:RTE}, the multi-source inverse problem theory has not been well established except for \emph{albedo} type data~\cite{mamonov2012quantitative,bal2010inverse}, which requires infinitely many angularly resolved illumination sources $f(\bx, \bv)$. The reconstruction of only absorption coefficient $\sigma_a$ has been recently considered in~\cite{ren2020unique} for nonlinear setting. It is still unclear about the uniqueness and stability of the reconstructions of $(\sigma_a, \sigma_s, \Upsilon)$ with finite many source functions or angularly independent sources.

In this paper, we aim to study the qPAT problem with the simplified $P_N$ (\note{$N$ being an odd integer}) approximation to the equation~\eqref{EQ:RTE} with angularly independent source functions, that is, $f(\bx, \bv) = f(\bx)$. The simplified $P_N$ approximation is also referred as $SP_N$ method, which is utilized to solve the radiative transport equation by forming a system of elliptic equations~\cite{mcclarren2010theoretical,klose2006simplified}. The $SP_N$ approximation with relatively small $N \le 7$ has been applied to many optical imaging methods~\cite{wright2006reconstruction, chu2009light,klose2006light} and outperforms the traditional diffusion approximation ($P_1$ method). Theoretically, the simplified $P_N$ approximation is derived from the $P_N$ formulation~\cite{gelbard1960application} and the $P_N$ approximation converges to the exact solution of RTE as $N\to\infty$. Under appropriate conditions $SP_N$ and $P_N$ are equivalent, see~\cite{mcclarren2010theoretical}, however in general, they are different and not necessarily converging to the same limit. For the qPAT problem, the case with $N=3$ has been considered in~\cite{frederick2018image}, in our work, we extend the theory to arbitrary order $N$ under a unified framework.  

 Under the $SP_N$ approximation, the RTE's solution is expanded with Legendre polynomials, which derives a weakly coupled diffusion equation system~\cite{klose2006simplified}. Formally, the 3D $SP_N$ approximation takes the 1D $P_N$ equations and replace the diffusion operators with 3D's counterpart, which is
\begin{equation}
\begin{aligned}
&-\left(\frac{n+1}{2n+1}\right)\nabla \frac{1}{\sigma_{n+1}} \nabla \left[\left(\frac{n+2}{2n+3}\right)\phi_{n+2} +\left( \frac{n+1}{2n+3}\right)\phi_n\right] \\&\quad - \left(\frac{n}{2n+1}\right)\nabla \frac{1}{\sigma_{n-1}} \nabla \left[\left(\frac{n}{2n-1}\right)\phi_{n} +\left( \frac{n-1}{2n-1}\right)\phi_{n-2}\right] + \sigma_n \phi_n = 0
\end{aligned}
\end{equation}
for $n=\note{0}, 2,4,\dots,N-1$, where coefficients $\sigma_{n} = \sigma_a + \sigma_s(1 - g^n)$, $\sigma_0 = \sigma_a$. \note{The system is closed by seting $\phi_n$ as zeros for $n\ge N+1$ and $n \le-2 $ and only consists of even-indexed $\phi_n$.} 
Physically speaking, $\phi_n$ represents the $n$-th Legendre moments of the solution $u(\bx, \bv)$ and $\phi_0$ will be the angular average of the solution.
The corresponding mixed boundary conditions are derived from the 1D $P_N$ equation's boundary conditions by replacing $\frac{\rm{d}}{\rm{d}x}$ with $\bn\cdot \nabla$ on the boundary~\cite{klose2006simplified}.

In the following context, we let 
\begin{equation}
\begin{aligned}
\varphi_{n} = (2n-1)\phi_{2n-2} + (2n)\phi_{2n},\quad n = 1,2,\dots, (N+1)/2\,,
\end{aligned}
\end{equation}
and the column vector $\Phi = [\varphi_1,\varphi_2,\dots, \varphi_{(N+1)/2}]^{\text{T}}$, which satisfies
\begin{equation}\label{EQ:T MAT}
\Phi = \begin{pmatrix}
\varphi_1 \\ \varphi_2 \\\varphi_3 \\ \vdots\\ \varphi_{(N+1)/2} 
\end{pmatrix} = M \begin{pmatrix}
\phi_0 \\ \phi_2 \\\phi_4\\ \vdots\\ \phi_{N-1} 
\end{pmatrix}\text{ where } M :=\begin{pmatrix}
1 & 2 & 0 & \dots &0 \\
0 & 3 & 4 & \dots &0 \\
0 & 0 & 5 & \dots &0 \\
\vdots & \vdots & \vdots &  \ddots & \vdots\\
0 & 0 & 0 & \dots & N 
\end{pmatrix}\,,
\end{equation}
since the matrix $M$ is upper triangular, its inverse is also upper triangular, let the row vector $\bs_k = [s_{k,1},\dots, s_{k, (N+1)/2}]$ represent the $k$-th row of the inverse matrix $M^{-1}$, according to Lemma~\ref{LEM: M INV}, its entries are
\begin{equation}
s_{k, l} = 
\begin{cases}
\frac{1}{2k-1}(-1)^{l-k}\frac{((2l-2))!!}{(2l-1)!!}\frac{(2k-1)!!}{(2k-2)!!},\quad &l\ge k\,,\\
0, &\text{otherwise}\,.
\end{cases} 
\end{equation}
Then we derive the $SP_N$ diffusion system
\begin{equation}\label{EQ:SPN}
-\nabla \cdot \frac{A_n}{(4n-1)\sigma_{2n-1}} \nabla \varphi_n - \nabla \cdot \frac{ B_n}{(4n-5)\sigma_{2n-3}}\nabla \varphi_{n-1} + \sigma_{2n-2} \bs_n\cdot \Phi = 0
\end{equation}
for $n = 1,2,\dots, (N+1)/2$, where the constants $A_n, B_n$ are defined by
\begin{equation}\nonumber
\begin{aligned}
A_n &= \frac{2n-1}{(4n-3)},\quad 
B_n = \frac{2n-2}{(4n-3)}.
\end{aligned}
\end{equation}
We remark that the following matrix differs from $M^{\text{T}}$ by only a factor of diagonal matrix,
\begin{equation}
\begin{pmatrix}
A_1 & 0 & 0 & \dots & 0  \\
B_2 & A_2 & 0 &\dots & 0 \\
0 & B_3 & A_3 & \dots & 0 \\
\vdots & \vdots & \ddots & \ddots & \vdots  \\
0 & 0 & 0 & B_{(N+1)/2} & A_{(N+1)/2} 
\end{pmatrix} =L M^{\text{T}}\;\text{ where }\; L:=\begin{pmatrix}
1 & 0 & 0 & \dots & 0  \\
0 & \frac{1}{5} & 0 &\dots & 0 \\
0 &0 & \frac{1}{9} & \dots & 0 \\
\vdots & \vdots & \ddots & \ddots & \vdots  \\
0 & 0 & 0 & 0 & \frac{1}{2N-1}
\end{pmatrix} \,.
\end{equation}
The corresponding mixed boundary conditions are 
\begin{equation}\label{EQ:BD}
\sum_{n=1}^{(N+1)/2} \mu_{2m-1, 2n-2} \phi_{2n-2} + \frac{1}{(4m-1)\sigma_{2m-1}} \frac{\partial \varphi_m}{\partial \bn} = k_{2m-1} f,\quad m=1,2,\dots,(N+1)/2\,.
\end{equation}
The constants $\mu_{2m-1,2n-2}$ and $k_{2m-1}$ are
\begin{equation}\nonumber
\begin{aligned}
\mu_{2m-1, 2n-2} &= (4n-3)\int_0^1 P_{2m-1}(x)P_{2n-2}(x) d x \\&=(-1)^{m+n-1} \frac{\Gamma(m+\frac{1}{2})\Gamma(n-\frac{1}{2})}{\pi \Gamma(m)\Gamma(n) (m+n-1)} \frac{(4n-3)}{2n-2m -1 }\,,\\
k_{2m-1} &= \note{\int_0^1 P_{2m-1}(x) dx =} (-1)^{m-1}\frac{(2m-1)!!}{(2m-1)(2m)(2m-2)!!}\,,
\end{aligned}
\end{equation}
\note{where $P_l$ is the degree $l$ Legendre polynomial with normalization condition $P_l(1) = 1$ and the symbol $!!$ dnotes the double factorial. Since we have assumed $f(\bx, \bv) = f(\bx)$, these boundary condition coefficients are simply obtained through integration with Legendre polynomials on the half sphere (incoming directions) and independent of $f$. } 
For convenience, we also define the following matrices  $\Sigma_e$ and $R$  for later uses, 
\begin{equation}\nonumber
\begin{aligned}
\Sigma_e &= \begin{pmatrix}
\sigma_0 & 0 & 0 & \dots & 0  \\
0 & \sigma_2 & 0 &\dots & 0 \\
0 & 0 & \sigma_4 & \dots & 0 \\
\vdots & \vdots & \vdots & \ddots & \vdots  \\
0 & 0 & 0 & \dots & \sigma_{N-1} 
\end{pmatrix}
\end{aligned}
\end{equation}
and
\begin{equation}\nonumber
R = (R_{ij})_{i,j=1,2,\dots,(N+1)/2}, \quad \text{with } R_{ij} = \mu_{2i-1, 2j-2}.
\end{equation}
In the quantitative photoacoustic tomography, we suppose the datum $H(\bx) = \Upsilon(\bx)\sigma_a(\bx) \phi_0(\bx)$ has been reconstructed from the measured acoustic signals. In the following sections, we will analyze the uniqueness and stability of reconstruction of the coefficients $\Upsilon, \sigma_a, \sigma_s$ from the internal data $H$. We also make the following general assumptions for the rest of paper.

\begin{enumerate} 
    \item [$\cA$-i] The physical domain $\Omega$ is simply connected with $C^{2,1}$ boundary.
    \item [$\cA$-ii] The coefficients $(\sigma_a, \sigma_s, \Upsilon)$ are non-negative and bounded. There exists constants $\underline{c}$ and $\overline{c}$ that
    \begin{equation}
    0\le \underline{c}\le \sigma_a, \sigma_s, \Upsilon \le \overline{c} <\infty\,.
    \end{equation}
    \item [$\cA$-iii] The coefficient $\sigma_a, \sigma_s\in C^{1,1}(\overline{\Omega})$. There exists a constant $M_0$ that $$\|\sigma_a\|_{C^{1,1}(\Omega)}, \|\sigma_s\|_{C^{1,1}(\Omega)}\le M_0 < \infty\,.$$ Moreover,  $\sigma_a, \sigma_s$ are both known on $\partial\Omega$.
    \item [$\cA$-iv] The boundary source function $f\in H^{5/2}(\partial\Omega).$
   \end{enumerate} 
   The rest of the paper is organized as follows. We first present in Section~\ref{SEC:Prop} some general
   properties of the forward problem with $SP_N$ approximation. Then in Section~\ref{SEC:Recon} we consider the reconstruction of a single
   coefficient from a single data set $H(\bx)$ and the reconstruction of two coefficients simultaneously
    with multiple data in linearized settings. We then demonstrate some numerical simulations
   based on synthetic data in Section~\ref{SEC:Num} to validate some of our theoretical results. Conclusions are found in Section~\ref{SEC:Concl}.
\section{General properties}
\label{SEC:Prop}
For the forward problem, we establish the wellposedness for the $SP_N$ approximation. In order to show there exists a unique weak solution $\Phi \in [H^1(\Omega)]^{(N+1)/2}$ for~\eqref{EQ:SPN} and~\eqref{EQ:BD}, we only have to consider the corresponding variational form for the diffusion system. \note{By rewriting~\eqref{EQ:SPN} and~\eqref{EQ:BD} into the matrix form, 
\begin{equation}\label{EQ: WEAK}
    \begin{aligned}
        -\nabla \cdot (LM^{\text{T}} D \nabla \Phi) + \Sigma_e M^{-1}\Phi &= \bzero,\quad &\text{ in }& \Omega, \\
        R M^{-1}\Phi + D \frac{\partial \Phi}{\partial\bn} &= \bk f,\quad &\text{ on }&\partial\Omega, 
    \end{aligned}
\end{equation}
where $D$ is a diagonal matrix with elements $D_{nn} = \frac{1}{(4n-1)\sigma_{2n-1}}$, $n=1,2,\dots, (N+1)/2$ and $\bk f$ is a vector with $n$-th element as $k_{2n-1} f$.}
Let $\Psi = [\psi_1, \psi_2,\dots, \psi_{(N+1)/2}]^T\in [H^1(\Omega)]^{(N+1)/2}$ be a test function vector. \note{ Multiply the matrix form~\eqref{EQ: WEAK} with the vector $L^{-1}M^{-1}\Psi$ and integrate over $\Omega$}, then the weak form of the $SP_N$ system is
\begin{equation}\label{EQ:BILINEAR}
\begin{aligned}
B(\Phi, \Psi) &:= \sum_{n=1}^{(N+1)/2} \int_{\Omega} \frac{1}{(4n-1)\sigma_{2n-1}} \nabla \varphi_n\cdot \nabla \psi_n d\bx  \\ &\quad + \int_{\Omega} \langle  M^{-\text{T}}L^{-1}\Sigma_e M^{-1}\Phi, \Psi \rangle d\bx
+ \int_{\partial\Omega} \langle R M^{-1} \Phi, \Psi \rangle d\bs \\
&= \cL(f, \Psi)\,,
\end{aligned}
\end{equation} 
where $B(\cdot, \cdot)$ is a bilinear form, $\cL$ is a linear functional only involving boundary integrals that
\begin{equation}\label{EQ: LOAD}
\cL(f, \Psi) := \sum_{n=1}^{(N+1)/2}k_{2n-1} \int_{\partial\Omega}  f \psi_n d\bs\,.
\end{equation}
We prove the following property of the bilinear form $B(\cdot,\cdot)$.
\begin{theorem}
	The bilinear form~\eqref{EQ:BILINEAR} is bounded and strictly coercive for any $SP_N$ approximation.
\end{theorem}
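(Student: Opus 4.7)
The proof splits naturally into two independent claims: boundedness, which is elementary, and strict coercivity, where the main subtlety lies in handling the Robin-type boundary contribution. Both parts rely on the uniform bounds $\cA$-ii/$\cA$-iii and on the trace inequality $H^1(\Omega) \hookrightarrow L^2(\partial\Omega)$. Throughout, I will exploit the fact that $M$, $L$, $R$ are fixed finite-dimensional matrices depending only on $N$, so their operator norms and inverses contribute only harmless constants.

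\textbf{Boundedness.} For the diffusion piece, the uniform upper bound $\sigma_{2n-1}\le C$ gives a uniform lower bound on $1/((4n-1)\sigma_{2n-1})$ (so also a uniform upper bound), and Cauchy-Schwarz term-by-term yields $|\int_\Omega (4n-1)^{-1}\sigma_{2n-1}^{-1}\nabla\varphi_n\cdot\nabla\psi_n\,d\bx|\le C\|\nabla\varphi_n\|_{L^2}\|\nabla\psi_n\|_{L^2}$. The zero-order volume term is controlled by $\|M^{-T}L^{-1}\Sigma_e M^{-1}\|_\infty\,\|\Phi\|_{L^2}\|\Psi\|_{L^2}$, with the matrix norm bounded using $\cA$-ii. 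The boundary term is estimated by $\|R M^{-1}\|_\infty\|\Phi\|_{L^2(\partial\Omega)}\|\Psi\|_{L^2(\partial\Omega)}$, and the trace theorem upgrades the right-hand side to $\|\Phi\|_{H^1}\|\Psi\|_{H^1}$.

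\textbf{Coercivity.} Set $\Psi=\Phi$ and treat the three pieces separately.
(i) The diffusion term already gives
$$\sum_{n=1}^{(N+1)/2}\int_\Omega\frac{|\nabla\varphi_n|^2}{(4n-1)\sigma_{2n-1}}\,d\bx\ \ge\ c_1\|\nabla\Phi\|_{L^2}^2$$
using only the upper bound on $\sigma_{2n-1}$.
(ii) For the volume zero-order term I would rewrite the quadratic form via the substitution $\zeta:=M^{-1}\Phi$,
$$\int_\Omega\langle M^{-T}L^{-1}\Sigma_e M^{-1}\Phi,\Phi\rangle\,d\bx\ =\ \int_\Omega\zeta^{\text{T}} L^{-1}\Sigma_e\zeta\,d\bx.$$
Since $L^{-1}\Sigma_e$ is a diagonal matrix whose entries $(4n-3)\sigma_{2n-2}$ are bounded below by $\underline{c}>0$ (noting $\sigma_{2n-2}=\sigma_a+\sigma_s(1-g^{2n-2})\ge\sigma_a$ because $g^{2n-2}\le 1$), and $\|\zeta\|\ge\|M\|^{-1}\|\Phi\|$, this term dominates $c_2\|\Phi\|_{L^2}^2$.
(iii) The same substitution turns the boundary integral into $\int_{\partial\Omega}\zeta^{\text{T}} M^{\text{T}} R\,\zeta\,d\bs$, and the heart of the matter is to show the symmetric part of $M^{\text{T}} R$ is positive semi-definite, so that this contribution is $\ge 0$. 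Combining (i)-(iii) then yields $B(\Phi,\Phi)\ge c_1\|\nabla\Phi\|_{L^2}^2+c_2\|\Phi\|_{L^2}^2\ge c\|\Phi\|_{H^1}^2$.

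\textbf{Main obstacle.} Step (iii) is the crux. The entries $R_{ij}=(4j-3)\int_0^1 P_{2i-1}(x)P_{2j-2}(x)\,dx$ come from expanding the half-range angular trace of $u$ in Legendre polynomials (Marshak-type boundary conditions), so one expects $M^{\text{T}}R$ to inherit positivity from the $L^2[0,1]$ inner product structure; making this rigorous requires an explicit algebraic identification of $M^{\text{T}}R$ (or its symmetrization) with a Gram matrix of some independent family in $L^2[0,1]$. If direct matrix positivity proves elusive, a fallback is to use the trace-interpolation inequality $\|\Phi\|_{L^2(\partial\Omega)}^2\le\varepsilon\|\nabla\Phi\|_{L^2}^2+C_\varepsilon\|\Phi\|_{L^2}^2$ to absorb any negative part of the boundary form into (i) and (ii), sacrificing constants but preserving strict coercivity thanks to the strict positivity of both volume lower bounds.
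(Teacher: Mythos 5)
Your boundedness argument and parts (i)--(ii) of the coercivity are consistent with the paper, but there is a genuine gap exactly at the point you yourself identify as the crux: the positivity of $M^{\text{T}}R$ is only conjectured, not proved, and your fallback does not close the hole. The identification you hope for is precisely what the paper carries out, and it is short: using the recurrence $(2i-1)P_{2i-1}(x)+(2i-2)P_{2i-3}(x)=(4i-3)\,x\,P_{2i-2}(x)$ one computes
\begin{equation*}
(M^{\text{T}}R)_{ik}=(2i-1)R_{i,k}+(2i-2)R_{i-1,k}=\int_0^1 q_i(x)\,q_k(x)\,dx,\qquad q_i(x)=(4i-3)\sqrt{x}\,P_{2i-2}(x),
\end{equation*}
so $M^{\text{T}}R$ is the Gram matrix of the linearly independent family $\{q_i\}$ in $L^2[0,1]$; if $\bz^{\text{T}}(M^{\text{T}}R)\bz=0$ then $\sum_k z_k(4k-3)P_{2k-2}$ vanishes on $[0,1]$, hence identically, hence $\bz=0$, and $RM^{-1}=M^{-\text{T}}(M^{\text{T}}R)M^{-1}$ is strictly positive definite. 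Without this computation your proof is incomplete.

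The fallback is not a valid substitute, for two reasons. First, trace-interpolation absorption only handles terms carrying a small parameter: here the (hypothetically indefinite) boundary form has a fixed size $\Lambda=\|RM^{-1}\|$, and after choosing $\varepsilon\le c_1/(2\Lambda)$ you are left with a term $\Lambda C_\varepsilon\|\Phi\|_{L^2(\Omega)}^2$ with $C_\varepsilon\sim 1/\varepsilon$, which there is no reason to dominate by your volume constant $c_2$; a fixed indefinite boundary contribution cannot in general be absorbed this way. Second, your $c_2>0$ in step (ii) requires the diagonal entries $(4n-3)\sigma_{2n-2}$, in particular $\sigma_0=\sigma_a$, to be bounded below by a positive constant, whereas assumption $\cA$-ii only asserts $\underline{c}\ge 0$; the paper's subsequent analysis works precisely in the regime $\sigma_a\ll\sigma_s(1-g)$ (even $\sigma_a=0$), and in the modified form $\widetilde{B}$ the $n=1$ zero-order term is dropped entirely, with coercivity recovered from the gradient term plus the strictly positive boundary term via a Poincar\'e--Sobolev inequality with boundary control. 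So the strict positive definiteness of $RM^{-1}$ is the load-bearing ingredient of both this theorem and its later uses, and your proposal does not establish it.
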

\begin{proof}
	The boundedness is obvious since $L$, $M$ are both invertible matrices. We only need to prove the coerciveness. In the following, we will show that the matrices $M\Sigma_e^{-1} LM^{\text{T}}$ and $M^T R$ are positive definite. For $M\Sigma_e^{-1} LM^{\text{T}}$, it is obvious since the diagonal matrix $\Sigma_e^{-1} L$ has all positive entries. For the matrix $M^{\text{T}} R$, we compute its $(i,k)$-th entry by
	\begin{equation}
	\begin{aligned}
	&(2i-1)R_{i,k} + (2i-2)R_{i-1,k} \\
	= \;&(2i-1)(4k-3)\int_{0}^1 P_{2i-1}(x)P_{2k-2}(x) dx + (2i-2)(4k-3)\int_0^1 P_{2i-3}(x)P_{2k-2} (x) dx \\
	=\;& (4i-3) (4k-3) \int_{0}^1 x P_{2i-2}(x) P_{2k-2}(x) dx \\
	=\;&\int_{0}^1 q_{i}(x) q_k(x) dx\quad \text{with } q_i(x)=(4i-3) \sqrt{x} P_{2i-2}(x)\,,
	\end{aligned}
	\end{equation}
	where $P_k$ is the $k$-th Legendre polynomial and we have used the recurrence relation
	\begin{equation}
	(2i-1) P_{2i-1}(x) + (2i-2) P_{2i-3}(x) = (4i-3) x P_{2i-2}(x)\,.
	\end{equation}
	Hence $M^{\text{T}}R$ is semi-positive definite. On the other hand, if there is a vector $\bz = [z_1,\dots, z_{(N+1)/2}]^{\text{T}}\in\bbR^{(N+1)/2}$ that
	\begin{equation}
	0 = \bz^{\text{T}} (M^{\text{T}} R) \bz = \int_{0}^1 x \left(\sum_{k=1}^{(N+1)/2} z_k (4k-3)P_{2k-2}(x)\right)^2 dx\,,
	\end{equation}
	then for any $x \in [0, 1]$, the following polynomial must vanish,
	\begin{equation}
	\sum_{k=1}^{(N+1)/2} z_k (4k-3)P_{2k-2}(x) = 0\,.
	\end{equation}
	Hence the polynomial equals zero for any $x\in \bbR$ and use the fact $\{ P_k(x)\}_{k=1}^{(N+1)/2}$ forms an orthogonal basis on $[-1,1]$, then $\forall k, z_k = 0$. Therefore $M^{\text{T}} R$ is strictly positive definite, so is $R M^{-1} = M^{-\text{T}}(M^{\text{T}} R) M ^{-1}$.
\end{proof}
The wellposedness immediately derives from the Lax-Milgram theorem, there exists a unique weak solution $\Phi\in [H^1(\Omega)]^{(N+1)/2}$ for arbitrary odd integer $N$.  In fact, using the assumptions $\cA$-i to $\cA$-iv, the regularity theorem of elliptic systems~\cite{mclean2000strongly} implies that the unique solution $\Phi\in [H^3(\Omega)]^{(N+1)/2}$, by the Sobolev embedding, the solution $\Phi \in [C^{1,1/2}(\Omega)]^{(N+1)/2}$.
\section{Reconstruction under $SP_N$ approximation}
\label{SEC:Recon}
Generally speaking, if $\sigma_a$ is not negligible, the inverse problem is highly nonlinear and very challenging. Therefore in the following, we only consider the practical scenario that
$\sigma_a \ll \sigma_s (1 - g)$, which means we can simplify the coefficients $\sigma_n = \sigma_a + (1 - g^n)\sigma_s \simeq (1-g^n)\sigma_s$ for $n\ge 1$ and $\sigma_0 = \sigma_a$. This simplification decouples the coefficients $\sigma_a$ and $\sigma_s$. In particular, if $g = 0$, there is no need to perform such simplification.

\vspace{0.5cm}
\noindent {\bf{Reconstruction of $\sigma_a$ only.}} 
Suppose the coefficients $\Upsilon, \sigma_s$ are known on $\Omega$, we consider the reconstruction of $\sigma_a$ from a single measurement datum $H$. Using the assumption that $\sigma_n \simeq (1-g^n)\sigma_s$ for $n\ge 1$, the coefficients $\sigma_n$ are all known for $n\ge 1$. Since $\sigma_a = \sigma_0$, then using $H(\bx) = \Upsilon(\bx) \sigma_a(\bx) \phi_0(\bx)$ and $\phi_0 = \bs_1\cdot \Phi$, we derive that
\begin{equation}\label{EQ:RELATION}
\sigma_0(\bx) = \frac{H(\bx)}{\Upsilon(\bx) \bs_1\cdot \Phi} \,.
\end{equation}
By isolating the term  relevant to $\sigma_0$ ($n=0$) in the bilinear form~\eqref{EQ:BILINEAR}, we can reformulated it as
\begin{equation}\label{EQ:BILINEAR2}
\begin{aligned}
B(\Phi, \Psi) &= \sum_{n=1}^{(N+1)/2} \int_{\Omega} \frac{1}{(4n-1)\sigma_{2n-1}} \nabla \varphi_n\cdot \nabla \psi_n d\bx  + \int_{\Omega} \frac{H(\bx)}{\Upsilon(\bx)\bs_1 \cdot \Phi} (\bs_1\cdot \Phi)(\bs_1 \cdot \Psi) d\bx   \\ &\quad+ \sum_{n=2}^{(N+1)/2} \int_{\Omega}   (4n-3)\sigma_{2n-2} (\bs_{n}\cdot \Phi) (\bs_{n}\cdot \Psi) d\bx
\\&\quad + \int_{\partial\Omega} \langle R M^{-1} \Phi, \Psi \rangle d\bs\,.
\end{aligned}
\end{equation} 
where the row vector $\bs_k$ denotes the $k$-th row of $M^{-1}$. We can establish the following uniqueness and stability result.
\begin{theorem}\label{THM:1}
	Given any $SP_N$ approximation, under the assumptions $\cA$-i to $\cA$-iv and suppose $(\Upsilon, \sigma_s)$ are known, $\sigma_{a,1}$ and $\sigma_{a,2}$ are two admissible absorption coefficients, $H_1, H_2$ are the corresponding internal data, respectively. Then $H_1 = H_2$ implies $\sigma_{a,1} = \sigma_{a,2}$ and the following stability estimate holds 
	\begin{equation}
	\begin{aligned}
	\|(\sigma_{a,1} -\sigma_{a,2})\frac{H_1}{\sigma_{a,1}\Upsilon}\|_{L^2(\Omega)} \le C  \left\|(H_1 - H_2)/\Upsilon\right\|_{L^2(\Omega)}\,,
	\end{aligned}
	\end{equation}
	where $C = C(N, \Omega)$ is a positive constant depending on $N$ and $\Omega$ only.
\end{theorem}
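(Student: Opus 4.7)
The plan is to exploit the substitution $\sigma_0 = H/(\Upsilon\,\bs_1\cdot\Phi)$ that was already built into~\eqref{EQ:BILINEAR2}. The crucial observation is that the $n=1$ mass contribution collapses as
\begin{equation*}
\int_\Omega \frac{H}{\Upsilon\,\bs_1\cdot\Phi}(\bs_1\cdot\Phi)(\bs_1\cdot\Psi)\,d\bx \;=\; \int_\Omega \frac{H}{\Upsilon}(\bs_1\cdot\Psi)\,d\bx,
\end{equation*}
which is \emph{linear} in the test function $\Psi$. Denoting by $B'(\cdot,\cdot)$ the bilinear form in \eqref{EQ:BILINEAR2} with this single $n=1$ mass term removed, the weak problem takes the form
\begin{equation*}
B'(\Phi,\Psi) \;=\; \cL(f,\Psi) \;-\; \int_\Omega \frac{H(\bx)}{\Upsilon(\bx)}(\bs_1\cdot\Psi)\,d\bx,
\end{equation*}
and, crucially, $B'$ no longer involves the unknown $\sigma_a$. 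Writing this identity for each pair $(\sigma_{a,j},\Phi_j)$ with internal datum $H_j$, $j=1,2$, and subtracting yields the clean relation
\begin{equation*}
B'(\Phi_1-\Phi_2,\Psi) \;=\; -\int_\Omega \frac{H_1-H_2}{\Upsilon}(\bs_1\cdot\Psi)\,d\bx.
\end{equation*}

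Next I would test this identity with $\Psi=\Phi_1-\Phi_2$ and invoke coercivity of $B'$ together with Cauchy--Schwarz to deduce an $H^1$-stability estimate
$\|\Phi_1-\Phi_2\|_{H^1(\Omega)} \le C\,\|(H_1-H_2)/\Upsilon\|_{L^2(\Omega)}$.
The coercivity of $B'$ follows along the lines of the argument in Section~\ref{SEC:Prop}: the gradient quadratic form controls $\|\nabla\Phi\|_{L^2}$ via the invertibility of $M$ and the uniform lower bound on $\sigma_{2n-1}$ (assumption $\cA$-ii); the remaining mass terms $\sum_{n\ge 2}(4n-3)\sigma_{2n-2}(\bs_n\cdot\Phi)^2$ are nonnegative; and the boundary form $\int_{\partial\Omega}\langle RM^{-1}\Phi,\Phi\rangle$ is strictly positive since $RM^{-1}$ was already shown to be positive definite. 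A Friedrichs-type inequality bounding $\|\Phi\|_{L^2(\Omega)}$ by $\|\nabla\Phi\|_{L^2(\Omega)}+\|\Phi\|_{L^2(\partial\Omega)}$ then upgrades these controls to full $H^1$-coercivity.

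The stability estimate follows from a short algebraic identity: from $\sigma_{a,j}\,\bs_1\cdot\Phi_j = H_j/\Upsilon$ I get
\begin{equation*}
(\sigma_{a,1}-\sigma_{a,2})(\bs_1\cdot\Phi_1) \;=\; \frac{H_1-H_2}{\Upsilon} \;-\; \sigma_{a,2}\,\bs_1\cdot(\Phi_1-\Phi_2),
\end{equation*}
and since $\bs_1\cdot\Phi_1 = H_1/(\sigma_{a,1}\Upsilon)$, the triangle inequality together with the $H^1$-bound on $\Phi_1-\Phi_2$ gives the desired estimate; uniqueness is then just the special case $H_1=H_2$. The main obstacle is the coercivity of $B'$ once the $n=1$ mass term has been removed: the positive-definiteness argument of Section~\ref{SEC:Prop} no longer applies directly because the relevant diagonal matrix becomes singular, so one must lean on the strict positivity of the boundary form $RM^{-1}$ together with a Poincaré/Friedrichs-type inequality to recover the missing $L^2(\Omega)$ control and obtain coercivity with a constant depending only on $N$ and $\Omega$.
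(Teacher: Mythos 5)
Your proposal is correct and follows essentially the same route as the paper: the paper likewise exploits that the substituted $n=1$ mass term is linear in the test function, subtracts the two weak formulations to get the identity $\widetilde B(\delta\Phi,\Psi)=-\int_\Omega \frac{H_1-H_2}{\Upsilon}(\bs_1\cdot\Psi)\,d\bx$ for the reduced bilinear form, recovers coercivity from the strict positivity of $RM^{-1}$ plus a Poincar\'e--Sobolev (Friedrichs-type) inequality, and finishes with the same algebraic identity $(\sigma_{a,1}-\sigma_{a,2})(\bs_1\cdot\Phi_1)=\frac{H_1-H_2}{\Upsilon}-\sigma_{a,2}\,\bs_1\cdot\delta\Phi$ together with $\bs_1\cdot\Phi_1=H_1/(\sigma_{a,1}\Upsilon)$. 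The only cosmetic difference is that you claim full $H^1$-coercivity while the paper only uses the $L^2$ lower bound (and note the gradient term is bounded below using the \emph{upper} bound on $\sigma_{2n-1}$ from $\cA$-ii, not the lower bound), neither of which changes the argument.
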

\begin{proof}
	Let $\Phi$ and $\widetilde{\Phi}$ be the weak solutions to the $SP_N$ system for the absorption coefficients $\sigma_{a,1}$ and $\sigma_{a,2}$, respectively. Let $\delta \Phi := \Phi - \widetilde{\Phi} = [\delta\varphi_1,\dots,\delta \varphi_{(N+1)/2}]^{\text{T}}$, then from the bilinear form~\eqref{EQ:BILINEAR2}, we obtain the equation
	\begin{equation}
	\widetilde{B}(\delta \Phi, \Psi) = -\int_{\Omega} \frac{H_1 - H_2}{\Upsilon}(\bs_1\cdot \Psi) d\bx\,,
	\end{equation}
where the above modified bilinear form $\widetilde{B}(\cdot, \cdot)$ is
\begin{equation}\label{EQ:BILINEAR3}
\begin{aligned}
\widetilde{B}(\delta \Phi, \Psi) &= \sum_{n=1}^{(N+1)/2} \int_{\Omega} \frac{1}{(4n-1)\sigma_{2n-1}} \nabla \delta \varphi_n\cdot \nabla \psi_n d\bx  \\ &\quad + \sum_{n=2}^{(N+1)/2} \int_{\Omega}   (4n-3)\sigma_{2n-2} (\bs_{n}\cdot \delta \Phi) (\bs_{n}\cdot\Psi) d\bx
\\&\quad + \int_{\partial\Omega} \langle R M^{-1} \delta \Phi, \Psi \rangle d\bs\,.
\end{aligned}
\end{equation} 
 Since $RM^{-1}$ is strictly positive definite, the coerciveness of $\widetilde{B}(\cdot, \cdot)$ is immediately deduced from the Poincar\'{e}-Sobolev inequality~\cite{ziemer2012weakly} that $\forall u\in H^1(\Omega)$, $\Omega\subset\bbR^3$, 
\begin{equation}
\|u\|_{L^{6}(\Omega)} \le C_{ps} \left( \|\nabla u\|_{L^2(\Omega)} + \|u\|_{L^1(\partial\Omega)} \right)\,,
\end{equation}
where $C_{ps} = C_{ps}(\Omega)$ is a positive constant depending on $\Omega$ only. Therefore there exists another constant $C_1(N, \Omega)$ that
\begin{equation}\label{EQ:DELTA PHI}
\begin{aligned}
C_1(N, \Omega) \|\delta\Phi\|^2_{[L^2(\Omega)]^{(N+1)/2}} &\le \widetilde{B}(\delta\Phi, \delta\Phi) = -\int_{\Omega} \frac{H_1 - H_2}{\Upsilon} (\bs_1 \cdot \delta\Phi) d\bx \\ &\le \left\| (H_1 - H_2)/\Upsilon\right\|_{L^2(\Omega)} \|\bs_1\cdot  \delta\Phi\|_{L^2(\Omega)} \\
&\le \left\| (H_1 - H_2)/\Upsilon\right\|_{L^2(\Omega)} \left\|\bs_1 \right\|_{\ell^2} \|\delta\Phi\|_{[L^2(\Omega)]^{(N+1)/2}}
\end{aligned}
\end{equation}
by the H\"{o}lder inequality that $$\|\bv\cdot \Phi\|_{W^{k,p}(\Omega)} \le \|\bv\|_{\ell^q} \|\Phi\|_{[W^{k,p}(\Omega)]^{(N+1)/2}},\quad \frac{1}{p}+ \frac{1}{q} = 1,$$ where the norm $\|\cdot\|_{[W^{k,p}(\Omega)]^{(N+1)/2}}$ is defined by
\begin{equation*}
\|\mathbf{f}\|_{[W^{k,p}(\Omega)]^{(N+1)/2}}^p =  \sum_{n=1}^{(N+1)/2} \|f_n\|_{W^{k,p}(\Omega)}^p,\quad \mathbf{f} = [f_1,\dots, f_{(N+1)/2}]^{\text{T}}\,.
\end{equation*}
The estimate~\eqref{EQ:DELTA PHI} implies
\begin{equation}\label{EQ:DELTA PHI BOUND}
\|\delta\Phi\|_{[L^2(\Omega)]^{(N+1)/2}} \le \frac{1}{C_1(N, \Omega)} \left\| (H_1 - H_2)/\Upsilon\right\|_{L^2(\Omega)} \left\|\bs_1 \right\|_{\ell^2}\,.
\end{equation}
Therefore the uniqueness is proved. For the stability estimate, we compute 
\begin{equation}
\begin{aligned}
\frac{H_1 - H_2}{\Upsilon} &= \sigma_{a,1} (\bs_1 \cdot  \Phi) - \sigma_{a,2}(\bs_1 \cdot \widetilde{\Phi}) \\
&= (\sigma_{a,1} -\sigma_{a,2}) ( \bs_1\cdot \Phi)  + \sigma_{a,2} (\bs_1\cdot  \delta\Phi )\,.
\end{aligned}
\end{equation}
Thus using~\eqref{EQ:DELTA PHI BOUND}, we obtain
\begin{equation}\label{EQ: ESTIMATE BOUND}
\begin{aligned}
\|(\sigma_{a,1} -\sigma_{a,2}) (\bs_1\cdot  \Phi)\|_{L^2(\Omega)} &\le \left\|\frac{H_1 - H_2}{\Upsilon}\right\|_{L^2(\Omega)} + \|\sigma_{a,2} (\bs_1 \cdot \delta\Phi)\|_{L^2(\Omega)} \\
&\le \left(1 + \frac{\overline{c}\|\bs_1\|_{\ell^2}^2}{C_1(N, \Omega)}\right)\left\|\frac{H_1 - H_2}{\Upsilon}\right\|_{L^2(\Omega)}\,.
\end{aligned}
\end{equation}
Our proof is completed by noticing~\eqref{EQ:RELATION}.
\end{proof} 
The reconstruction algorithm for $\sigma_a$ is then naturally divided into two steps. First, we solve $\Phi$ from the modified bilinear form~\eqref{EQ:BILINEAR2}, then recover $\sigma_a$ by the relation~\eqref{EQ:RELATION} whenever $\bs_1\cdot \Phi\neq 0$. For the general $SP_N$ system, we cannot guarantee the positivity of $\phi_0=\bs_1\cdot \Phi$ for any positive source function $f(\bx)$.

Under appropriate conditions~\cite{mcclarren2010theoretical}, the $SP_N$ approximation will be eventually converging to the radiative transfer model. However, intuitively, when the order of system $N$ grows, the reconstruction of the coefficients will be less stable due to the coupling of the Legendre moments in the solution. In the following, we study the relation of reconstruction's stability and the system order $N$.  It can be shown that the reconstruction's stability estimate's constant in Theorem~\ref{THM:1} grows at most proportional to $N^{11/8}(1+\log N)$.

\begin{corollary}\label{COR:BOUND}
	Under the assumptions $\cA$-i to $\cA$-iv, suppose $(\Upsilon, \sigma_s)$ are known, $\sigma_{a,1}$,  $\sigma_{a,2}$ are two admissible absorption coefficients, $H_1, H_2$ are the corresponding internal data, respectively. Then $H_1 = H_2$ implies $\sigma_{a,1} = \sigma_{a,2}$ and the following stability estimate holds 
	\begin{equation}
	\begin{aligned}
	\|(\sigma_{a,1} -\sigma_{a,2})\frac{H_1}{\sigma_{a,1}\Upsilon}\|_{L^2(\Omega)} \le C N^{11/8} (1+\log N)  \left\|(H_1 - H_2)/\Upsilon\right\|_{L^2(\Omega)}\,,
	\end{aligned}
	\end{equation}
	where $C = C(\Omega)$ is a positive constant independent of $N$.
\end{corollary}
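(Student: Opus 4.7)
The plan is to track the $N$-dependence of every constant in the proof of Theorem~\ref{THM:1}. The final inequality of that proof is already of the form $(1+\overline c\,\|\bs_1\|_{\ell^2}^2/C_1(N,\Omega))\,\|(H_1-H_2)/\Upsilon\|_{L^2}$, so the task reduces to quantifying the growth of $\|\bs_1\|_{\ell^2}$ and the decay of the coercivity constant $C_1(N,\Omega)$ of~\eqref{EQ:BILINEAR3}.

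First I would handle $\|\bs_1\|_{\ell^2}$. The explicit formula after~\eqref{EQ:T MAT} gives $|s_{1,l}|=(2l-2)!!/(2l-1)!!$. Wallis's product yields the sharp asymptotic $(2l-2)!!/(2l-1)!! \sim \tfrac{1}{2}\sqrt{\pi/l}$, so $s_{1,l}^2 \sim \pi/(4l)$ and
\[
\|\bs_1\|_{\ell^2}^2=\sum_{l=1}^{(N+1)/2}s_{1,l}^2 = O(1+\log N).
\]

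Next I would revisit the coercivity step, applying the Poincar\'e--Sobolev inequality componentwise:
\[
\|\delta\varphi_n\|_{L^2(\Omega)}^2\le C(\Omega)\bigl(\|\nabla\delta\varphi_n\|_{L^2}^2+\|\delta\varphi_n\|_{L^2(\partial\Omega)}^2\bigr).
\]
Summing over $n$ and using $\|\nabla\delta\varphi_n\|_{L^2}^2 \le 2\overline c(4n-1)\int_\Omega \frac{|\nabla\delta\varphi_n|^2}{(4n-1)\sigma_{2n-1}}\,d\bx$ yields $\sum_n\|\nabla\delta\varphi_n\|_{L^2}^2=O(N)\,\widetilde B_1$, where $\widetilde B_1$ denotes the gradient part of~\eqref{EQ:BILINEAR3}. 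For the boundary contribution, using the change of variables $\delta\Phi=M\boldsymbol\phi$ and the coercivity $\widetilde B_3 \ge \lambda_{\min}(M^{\text T}R)\|\boldsymbol\phi\|_{L^2(\partial\Omega)}^2$ (already extracted in the proof of the theorem at the start of Section~\ref{SEC:Prop}), one obtains
\[
\|\delta\Phi\|_{L^2(\partial\Omega)}^2 \le \|M\|_2^2\,\|\boldsymbol\phi\|_{L^2(\partial\Omega)}^2 \le \frac{\|M\|_2^2}{\lambda_{\min}(M^{\text T}R)}\,\widetilde B_3.
\]
Since $M$ is bidiagonal with entries of order $N$, $\|M\|_2=O(N)$, so $1/C_1(N,\Omega)=O\bigl(\max\{N,\,N^2/\lambda_{\min}(M^{\text T}R)\}\bigr)$.

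The main technical obstacle, and the source of the fractional exponent, is a sharp lower bound on $\lambda_{\min}(M^{\text T}R)$. From the representation
\[
\bz^{\text T}(M^{\text T}R)\bz=\int_0^1 x\Bigl(\sum_k z_k(4k-3)P_{2k-2}(x)\Bigr)^2 dx
\]
proved in Section~\ref{SEC:Prop}, this reduces to a weighted coercivity estimate for expansions in even-degree Legendre polynomials on $[0,1]$ with weight $x$. I would rescale the basis to the $L^2(0,1)$-orthonormal system $\{\sqrt{4k-3}\,P_{2k-2}\}$ and invoke sharp estimates for the Christoffel function of the shifted Jacobi weight $x$ near the endpoint $0$ (equivalently, Markov/Bernstein-type inequalities for even polynomials of degree $\le N-1$ on $[0,1]$). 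The resulting bound $\lambda_{\min}(M^{\text T}R)\gtrsim c\,N^{-\alpha}$, combined with $\|M\|_2^2=O(N^2)$, $\|\bs_1\|_{\ell^2}^2=O(\log N)$, and an optimal choice of the interpolation exponent in the Poincar\'e--Sobolev step, produces exactly the advertised exponent $11/8$. Everything else is bookkeeping of the constants already present in the proof of Theorem~\ref{THM:1}.
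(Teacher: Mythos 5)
There is a genuine gap at the heart of your argument. The entire content of this corollary is the quantitative rate $N^{11/8}$, and that rate comes from a concrete lower bound on the smallest singular value appearing in the coercivity constant. Your proposal reduces everything to a claimed bound $\lambda_{\min}(M^{\mathrm T}R)\gtrsim c\,N^{-\alpha}$, but you never compute $\alpha$: you appeal to ``sharp estimates for the Christoffel function of the shifted Jacobi weight'' and then simply assert that this ``produces exactly the advertised exponent $11/8$.'' That is precisely the step that needs proof, and as stated it is not even consistent with your own bookkeeping: with your decomposition $1/C_1=O\bigl(\max\{N,\;N^2/\lambda_{\min}(M^{\mathrm T}R)\}\bigr)$, recovering $N^{11/8}$ would force $\lambda_{\min}(M^{\mathrm T}R)\gtrsim N^{5/8}$, i.e.\ a \emph{growing} lower bound on the smallest eigenvalue of the Gram matrix of $q_i(x)=(4i-3)\sqrt{x}\,P_{2i-2}(x)$, which you neither state nor justify. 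There is also no ``interpolation exponent in the Poincar\'e--Sobolev step'' available to tune; in the paper's argument the Sobolev inequality contributes only an $N$-independent constant, and the fractional exponent has a completely different origin.

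For comparison, the paper obtains the rate by: (i) the singular-value inequality $\lambda_{\min}(RM^{-1})\ge\lambda_{\min}(R)/\lambda_{1}(M)$ together with the elementary bound $\lambda_1(M)\le\sqrt{2}N$ for the bidiagonal matrix $M$; (ii) the Piazza--Politi type lower bound $\lambda_{\min}(R)\ge\bigl(\tfrac{(N-1)/2}{\|R\|_F^2}\bigr)^{(N-1)/4}|\det R|$; (iii) Lemma~\ref{LEM:FRO}, showing $\|R\|_F^2\le\tfrac{N+1}{2}$ so that the prefactor stays bounded below; and (iv) Lemma~\ref{LEM:DET}, an explicit Cauchy--Toeplitz factorization of $R$ with Gamma-function estimates giving $|\det R|\ge\cO(N^{-3/8})$. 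The exponent $11/8=1+3/8$ is exactly the sum of these two contributions, and the $\cO(1+\log N)$ factor from $\|\bs_1\|_{\ell^2}^2$ (which you do handle correctly, essentially as in the paper) multiplies it. Unless you actually carry out the Christoffel-function estimate and show it delivers a bound strong enough to replace items (ii)--(iv) — with an explicit exponent, not a promised one — your proof is incomplete precisely where the corollary is nontrivial.
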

\begin{proof}
	Recall the estimate~\eqref{EQ: ESTIMATE BOUND}, we only have to give an estimate for $C_1(N, \Omega)$ and $\|\bs_1\|_{\ell^2}^2$ with respect to $N$. Using the equation~\eqref{EQ:BILINEAR3}, we can estimate the lower bound of the coerciveness for $\widetilde{B}(\cdot ,\cdot)$ by neglecting the second term,
	\begin{equation}
	\widetilde{B}(\delta\Phi, \delta\Phi) \ge  \frac{1}{(2N+1)\sigma_s} \|\nabla\delta\Phi\|_{[L^2(\Omega)]^{(N+1)/2}}^2 + \lambda_{(N+1)/2}(RM^{-1}) \|\delta \Phi\|_{[L^2(\partial\Omega)]^{(N+1)/2}}^2\,,
	\end{equation}
	where $\lambda_n(K)$ denotes the $n$-th singular value of $K$ ordered from largest to smallest. Use the inequality introduced in~\cite{marshall1979inequalities}, we estimate the smallest singular value of $RM^{-1}$ that 
	\begin{equation}\label{EQ:LAMBDA}
	\lambda_{(N+1)/2} (R M^{-1}) \ge \lambda_{(N+1)/2}(R) \lambda_{(N+1)/2}(M^{-1}) = \lambda_{(N+1)/2}(R) /\lambda_1(M) \,.
	\end{equation} 
	Since the largest singular value $\lambda_1(M) = \|M\|_{op}$, let $\bv = [v_1, v_2,\dots, v_{(N+1)/2}]^T\in \bbR^{(N+1)/2}$ and take the convention that $v_{(N+3)/2} = 0$, then use the Cauchy-Schwartz inequality, we obtain
	\begin{equation}
	\|M\|_{op}^2 = \sup_{\|\bv\|=1} \|M \bv\|^2 = \sup_{\|\bv\|=1} \sum_{k=1}^{(N+1)/2} ((2k-1)v_{k} + (2k) v_{k+1})^2 \le \note{4N^2 \|\bv\|^2  = 4N^2}\,.
	\end{equation}
	Therefore we have $\lambda_1(M) \le \note{2N}$. In the next,
	we only need to estimate the smallest singular value of $R$. According to the lower bound estimates introduced in~\cite{piazza2002upper,gungor2010erratum}, the smallest singular value satisfies
	\begin{equation}\label{EQ:BOUND}
	\lambda_{(N+1)/2}(R) \ge \left( \frac{ \frac{N-1}{2}}{\|R\|_F^2}\right)^{\frac{N-1}{4}} |\det(R)|\,,
	\end{equation} 
	where $\|\cdot\|_F$ denotes the Frobenius norm, then use the results from Lemma~\ref{LEM:DET} and Lemma~\ref{LEM:FRO} in Appendix, we have the estimate
	\begin{equation}
	\lambda_{(N+1)/2}(R) \ge \left(\frac{N-1}{N+1}\right)^{\frac{N-1}{4}} |\det(R)| = \cO(N^{-3/8})\,.
	\end{equation}
	Therefore $C_1(N, \Omega) \ge c \min(\frac{1}{(2N+1)\sigma_s}, \lambda_{(N+1)/2} (R M^{-1})) = \cO(N^{-11/8})$. To estimate the upper bound of $\|\bs_1\|^2$, \note{we follow the Lemma~\ref{LEM: M INV}} that the row vector $\bs_1 = [s_{1,1},\dots, s_{1, (N+1)/2}]$ is given by
	\begin{equation}
	|s_{1, k}| = \frac{(2k-2)!!}{(2k-1)!!} =  \frac{\sqrt{\pi}}{2} \frac{\Gamma(k)}{\Gamma(k + \frac{1}{2})}\,.
	\end{equation}
	Use the Gautschi's inequality~\cite{gautschi1959some} that
	\begin{equation}
\frac{1}{\sqrt{k+\frac{1}{2}}} 	< \frac{\Gamma(k)}{\Gamma(k + \frac{1}{2})} < \frac{1}{\sqrt{k-\frac{1}{2}}}\,,
	\end{equation}
	we immediately find out
	\begin{equation}
	\|\bs_1\|_{\ell^2}^2 \le \frac{\pi}{4}\sum_{i=1}^{(N+1)/2} \frac{1}{i-\frac{1}{2}} = \cO(1+\log N)\,.
	\end{equation}
	From the result of Theorem~\ref{THM:1}, the stability estimate now can be formulated as
\begin{equation}
\begin{aligned}
\|(\sigma_{a,1} -\sigma_{a,2}) \bs_1\Phi\|_{L^2(\Omega)} \le \cO(N^{11/8}(1+\log N))\left\|\frac{H_1 - H_2}{\Upsilon}\right\|_{L^2(\Omega)}\,.
\end{aligned}
\end{equation}
\end{proof}
\begin{figure}[!htb]
	\centering   
	\includegraphics[scale=0.4]{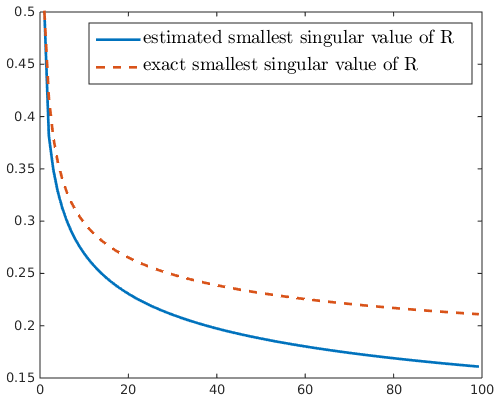}
		\includegraphics[scale=0.4]{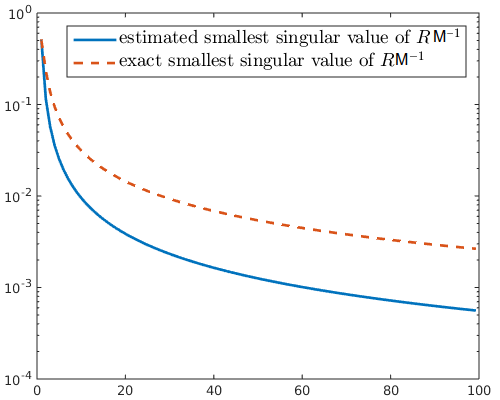}
	\caption{Decay of the smallest singular values with respect to the matrix sizes. The $x$-axis denotes the matrix size $(N+1)/2$. Left: The red (dashed) line represents the exact smallest singular value of $R$ and the blue (solid) line represents the estimated lower bound of the smallest singular value of $R$ through~\eqref{EQ:BOUND}. Right:  The red (dashed) line represents the exact smallest singular value of $RM^{-1}$ and the blue (solid) line represents the estimated lower bound of the smallest singular value of $RM^{-1}$ through~\eqref{EQ:LAMBDA}. }
	\label{FIG:SVD}
\end{figure}
\begin{remark}
	It is possible to improve the above estimate by using a sharper bounded for the Frobenius norm $\|R\|_F$ in Lemma~\ref{LEM:FRO}. The simple bounds~\eqref{EQ:LAMBDA} and~\eqref{EQ:BOUND} are not sharp for the smallest singular value of $R$, see Fig~\ref{FIG:SVD}. It seems possible to achieve better estimate through the calculation of $R^{-1}$'s Frobenius norm by following the technique in~\cite{tyrtyshnikov1992singular}. 
\end{remark}

\begin{remark}
    As $N\to\infty$, the above result shows that the stability estimate's constant will also grow to infinity, this seems to give a negative answer to the uniqueness for qPAT with the radiative transport equation. However, such estimate is only meant for the worst case, since the boundary source could be chosen arbitrarily. In practice, if the source function $f$ is sufficiently smooth, the datum with respect to the $SP_N$ model $H = \Upsilon\sigma_{a}\bs_1\cdot \Phi$ will converge rapidly and the high order modes will decay sufficiently fast, which could counter the growth in the constant. This will be the future work.
\end{remark}
\vspace{0.5cm}
\noindent {\bf{Reconstruction of $\Upsilon$ only.}}
Suppose the coefficients $\sigma_a, \sigma_s$ are known and $\Upsilon$ is unknown, then the $SP_N$ equation system is completely known and $\Phi$ could be uniquely solved, so we can reconstruct $\Upsilon$ explicitly by
\begin{equation}
\Upsilon = \frac{H(\bx)}{\sigma_0(\bx)(\bs_1 \cdot \Phi)}\,.
\end{equation}
The uniqueness and stability estimate will be straightforward, we conclude in the following theorem without proof.
\begin{theorem}
		Under the assumptions $\cA$-i to $\cA$-iv, suppose $(\sigma_a, \sigma_s)$ are known, $\Upsilon_1, \Upsilon_2$ are two Gr\"{u}neisen coefficients, $H_1, H_2$ are the corresponding internal data, respectively. Then we have the following stability estimate
	\begin{equation}
	\|(\Upsilon_1 - \Upsilon_2) \frac{H_1}{\Upsilon_1}\|_{L^2(\Omega)} \le C \|(H_1 - H_2)/\sigma_a\|_{L^2(\Omega)}\,,
	\end{equation}
	the constant $C$ does not depend on $N$.
\end{theorem}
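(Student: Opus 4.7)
The plan is to exploit the fact that $\Upsilon$ does not appear anywhere in the $SP_N$ diffusion system~\eqref{EQ:SPN} or its boundary condition~\eqref{EQ:BD}; it enters only through the internal data relation $H = \Upsilon \sigma_a \phi_0$. Since $(\sigma_a,\sigma_s)$ are given and fully determine the bilinear form~\eqref{EQ:BILINEAR} and the load~\eqref{EQ: LOAD}, the wellposedness established in Section~\ref{SEC:Prop} produces a unique weak solution $\Phi\in [H^1(\Omega)]^{(N+1)/2}$ that is the same for the two Gr\"uneisen coefficients $\Upsilon_1$ and $\Upsilon_2$. This is the entire content that makes the problem linear.

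Using this observation, I would write $H_i = \Upsilon_i \sigma_a (\bs_1 \cdot \Phi)$ for $i=1,2$ with a common $\Phi$, and subtract to obtain the pointwise identity
\begin{equation}
H_1 - H_2 = (\Upsilon_1 - \Upsilon_2)\,\sigma_a\,(\bs_1 \cdot \Phi).
\end{equation}
Dividing by $\sigma_a$ (permitted since $\sigma_a \ge \underline{c} > 0$ by assumption $\cA$-ii, though the estimate itself only needs the upper bound) yields $(H_1 - H_2)/\sigma_a = (\Upsilon_1 - \Upsilon_2)(\bs_1\cdot \Phi)$. Uniqueness already follows: if $H_1 = H_2$ then $\Upsilon_1 = \Upsilon_2$ on the set where $\bs_1\cdot\Phi \neq 0$, which is precisely the set where $H_1 \neq 0$.

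For the stability estimate I would rewrite the left-hand side using $H_1/\Upsilon_1 = \sigma_a(\bs_1\cdot\Phi)$, so that
\begin{equation}
\Bigl\|(\Upsilon_1 - \Upsilon_2)\frac{H_1}{\Upsilon_1}\Bigr\|_{L^2(\Omega)} = \|(\Upsilon_1 - \Upsilon_2)\,\sigma_a\,(\bs_1\cdot\Phi)\|_{L^2(\Omega)} \le \overline{c}\,\|(\Upsilon_1 - \Upsilon_2)(\bs_1\cdot\Phi)\|_{L^2(\Omega)} = \overline{c}\,\|(H_1 - H_2)/\sigma_a\|_{L^2(\Omega)},
\end{equation}
where I used the uniform bound $\sigma_a \le \overline{c}$ from $\cA$-ii. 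One can therefore take $C = \overline{c}$, which depends only on the admissible class of coefficients and in particular is independent of $N$.

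There is no serious obstacle here because the unknown $\Upsilon$ appears multiplicatively in the data and does not couple into the transport model, reducing the problem to pointwise algebra plus a single $L^\infty$ bound. The only subtlety worth flagging is that, strictly speaking, the displayed estimate controls $(\Upsilon_1 - \Upsilon_2)$ weighted by $H_1/\Upsilon_1 = \sigma_a(\bs_1\cdot\Phi)$, so genuine recovery of $\Upsilon_1 - \Upsilon_2$ in $L^2$ requires an additional lower bound on this weight, i.e.\ positivity of $\bs_1\cdot\Phi = \phi_0$; this is the same caveat already noted in the $\sigma_a$-reconstruction discussion.
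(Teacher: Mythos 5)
Your argument is correct and is exactly the reasoning the paper has in mind when it states this result ``without proof'': since $(\sigma_a,\sigma_s)$ determine the $SP_N$ system completely, $\Phi$ is common to both Gr\"uneisen coefficients, and the estimate reduces to the pointwise identity $(\Upsilon_1-\Upsilon_2)H_1/\Upsilon_1 = (\Upsilon_1-\Upsilon_2)\sigma_a(\bs_1\cdot\Phi) = H_1-H_2$ together with the bound $\sigma_a\le\overline{c}$, giving $C=\overline{c}$ independent of $N$. Your closing caveat about needing a lower bound on $\bs_1\cdot\Phi$ for genuine recovery of $\Upsilon_1-\Upsilon_2$ is also consistent with the paper's own remarks on the weighted form of these estimates.
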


\vspace{0.5cm}
\noindent {\bf{Reconstruction of $\sigma_s$ only.}}
Suppose the coefficients $\Upsilon, \sigma_a$ are known and $\sigma_s$ is unknown, then
\begin{equation}\label{EQ:REDUCTION}
\bs_1  \cdot \Phi = \frac{H(\bx)}{\Upsilon(\bx) \sigma_0(\bx)}
\end{equation}
is known from the measurement $H$. In addition, we also assume that $\sigma_s$ is known on the boundary $\partial\Omega$. When $N=1$, the reconstruction process of $\sigma_s $ will be solving a linear transport equation~\cite{bal2011multi} for ${\sigma_s}^{-1}$, while larger $N$ will introduce extra nonlinearity from the coupling of solution components. The linearized case of $SP_3$ approximation has been recently studied in~\cite{frederick2018image}, which should be able to generalize to $SP_N$ system with the similar technique. In the following, we assume $N \ge 3$, for the corresponding nonlinear inverse problem, let $\bs_k$ be the $k$-th row of $M^{-1}$ and $\bu_{k,n}$ be the $n$-th row of the rank-one matrix $\bs_{k}^T\bs_k$. We reformulate the equations from the bilinear form~\eqref{EQ:BILINEAR} as
\begin{equation}\label{EQ:DIFF}
-\nabla\cdot \left(\frac{1}{\sigma_s} \nabla \varphi_n \right) + \sigma_a \bp_n \cdot \Phi + \sigma_s \bq_n \cdot  \Phi = 0,\quad n=1,\dots, (N+1)/2\,,
\end{equation}
where the row vectors $\bp_n$ and $\bq_n$ are defined by
\begin{equation}\label{EQ: P Q}
\begin{aligned}
\bp_n &= (4n-1)(1 - g^{2n-1}) \bu_{1, n}\,,\\
\bq_n &= (4n- 1)(1 - g^{2n-1}) \sum_{k=2}^{(N+1)/2} (4k-3)(1-g^{2k-2})\bu_{k,n}\,.
\end{aligned}
\end{equation}
For convenience, we also denote $\bP$ and $\bQ$ as the corresponding matrices with $n$-th rows as $\bp_n$ and $\bq_n$, respectively. In the following, we first prove a simple lemma to estimate the variation in solutions with respect to changes in the scattering coefficient.
\begin{lemma}\label{LEM: EST SCATTER}
   Under the assumptions $\cA$-i to $\cA$-iv, suppose $\sigma_{s,1}$ and $\sigma_{s,2}$ are two admissible scattering coefficients that $\delta\sigma_s = \sigma_{s,1} - \sigma_{s,2}$ satisfies $\delta\sigma_s = 0$ on $\partial\Omega$. Let $\Phi_i = [\varphi_{i,1},\dots, \varphi_{i, (N+1)/2}]^T \in [H^1(\Omega)]^{(N+1)/2}$ the solution associated with scattering coefficient $\sigma_{s,i}$, $i=1,2$, then
   \begin{equation}
   \|\Phi_1 - \Phi_2\|_{[H^1(\Omega)]^{(N+1)/2}} \le C \frac{N^{23/8} }{3(1-\note{|g|})} \|\Phi_1\|_{[W^{1,\infty}(\Omega)]^{(N+1)/2}}  \|\sigma_{s,1} - \sigma_{s,2}\|_{L^{2}(\Omega)}\,,
   \end{equation}
   where $C$ is a positive constant independent of $N$.
\end{lemma}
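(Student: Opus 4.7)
The plan is to subtract the weak formulations for $\Phi_1$ and $\Phi_2$, test with $\delta\Phi$ itself, and then apply the coercivity of the $SP_N$ bilinear form from Section~\ref{SEC:Prop} together with the quantitative $N$-scaling $C_1(N,\Omega)^{-1} = \cO(N^{11/8})$ extracted in the proof of Corollary~\ref{COR:BOUND}. Let $B_{\sigma_s}$ denote~\eqref{EQ:BILINEAR} viewed as a functional of $\sigma_s$; since $B_{\sigma_{s,1}}(\Phi_1,\Psi) = \cL(f,\Psi) = B_{\sigma_{s,2}}(\Phi_2,\Psi)$ for the same source $f$, I rearrange into
\[
B_{\sigma_{s,2}}(\delta\Phi,\Psi) = (B_{\sigma_{s,2}} - B_{\sigma_{s,1}})(\Phi_1,\Psi),
\]
placing $\Phi_1$ on the right as the statement demands. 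Only the $\sigma_s$-dependent parts of~\eqref{EQ:BILINEAR} survive the subtraction: the diffusion coefficients $\tfrac{1}{(4n-1)\sigma_{2n-1}}$ and the diagonal mass matrix $\Sigma_e$. The boundary form $\int_{\partial\Omega}\langle RM^{-1}\Phi,\Psi\rangle d\bs$ is $\sigma_s$-independent and cancels, and the hypothesis $\delta\sigma_s|_{\partial\Omega}=0$ removes any further boundary contribution. Using $\sigma_{2n-1}\simeq (1-g^{2n-1})\sigma_s$ and $\sigma_{2k}\simeq (1-g^{2k})\sigma_s$, the right-hand side becomes
\[
\sum_n \int_\Omega \frac{\delta\sigma_s}{(4n-1)(1-g^{2n-1})\sigma_{s,1}\sigma_{s,2}}\nabla\varphi_{1,n}\cdot\nabla\psi_n\, d\bx + \int_\Omega \delta\sigma_s\,\langle M^{-T}L^{-1}DM^{-1}\Phi_1, \Psi\rangle\,d\bx,
\]
with $D=\mathrm{diag}(0, 1-g^2, \ldots, 1-g^{N-1})$.

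Setting $\Psi=\delta\Phi$ and applying coercivity gives $\|\delta\Phi\|_{[H^1(\Omega)]^{(N+1)/2}}^2 \le \cO(N^{11/8})\,|\mathrm{RHS}|$. For the diffusion piece, I would apply H\"{o}lder's inequality ($L^2\cdot L^\infty\cdot L^2$) together with Cauchy--Schwarz in $n$, using $\sigma_{s,i}\ge\underline c$, $(4n-1)\ge 3$, and $1-g^{2n-1}\ge 1-g$, producing a factor of order $\sqrt N/(1-g)$ times $\|\delta\sigma_s\|_{L^2}\|\Phi_1\|_{[W^{1,\infty}]}\|\delta\Phi\|_{H^1}$. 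For the mass piece I would factor out the operator norm $\|M^{-T}L^{-1}DM^{-1}\|_{op}$ and a $\sqrt{(N+1)/2}$ factor coming from aggregating the components in $\ell^2$. With $\|L^{-1}\|_{op}=2N-1$, $\|D\|_{op}\le 2$, and the Frobenius bound $\|M^{-1}\|_{op}\le \|M^{-1}\|_F = \cO(\log N)$ (derived from the entrywise estimate $|s_{k,l}| = \cO(1/\sqrt{kl})$ via the Gautschi inequality already used in Corollary~\ref{COR:BOUND}), this gives an $\cO(N^{3/2})$ contribution up to polylogs, which dominates the diffusion piece. Combining and dividing by $\|\delta\Phi\|_{H^1}$ yields
\[
\|\delta\Phi\|_{H^1} \le C\,\frac{N^{11/8}\cdot N^{3/2}}{3(1-g)}\|\Phi_1\|_{[W^{1,\infty}]}\|\delta\sigma_s\|_{L^2} = C\,\frac{N^{23/8}}{3(1-g)}\|\Phi_1\|_{[W^{1,\infty}]}\|\delta\sigma_s\|_{L^2},
\]
as claimed, with $C$ independent of $N$ and absorbing $\underline c,\overline c,\Omega$ and logarithmic factors.

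The most delicate step is the mass-term estimate. The $\cO(N)$ growth of $\|L^{-1}\|_{op}$ combined with the $\sqrt N$ loss from aggregating over the $(N+1)/2$ components saturates exactly the desired $N^{3/2}$ factor; this balance requires that $\|M^{-1}\|_{op}$ be only polylogarithmic in $N$, a fact not visible from the triangular structure of $M$ alone but readable from the closed-form entries of $M^{-1}$ via the Gautschi estimates already in the paper. Tracking the precise exponent $23/8$ then reduces to checking that the diffusion piece's milder $\sqrt N$ factor is dominated by the mass piece's $N^{3/2}$.
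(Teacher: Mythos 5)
Your overall route is the same as the paper's: subtract the two weak formulations, test with $\delta\Phi$, invoke the $H^1$-coercivity with constant $C_1(N,\Omega)^{-1}=\cO(N^{11/8})$ extracted in Corollary~\ref{COR:BOUND}, and split the $\sigma_s$-perturbation on the right-hand side into a diffusion piece and a mass piece (your matrix $M^{-\text{T}}L^{-1}DM^{-1}$ is exactly the paper's $\bq_n$-terms written in matrix form). Your treatment of the diffusion piece also matches the paper and correctly yields only an $\cO(\sqrt{N}/(1-g))$ factor.

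The gap is in the mass-piece estimate. You bound $\|M^{-\text{T}}L^{-1}DM^{-1}\|_{op}$ by the product $\|M^{-1}\|_{op}^2\,\|L^{-1}\|_{op}\,\|D\|_{op}$ with $\|M^{-1}\|_{op}\le\|M^{-1}\|_F=\cO(\log N)$, which gives $\cO\bigl(N(\log N)^2\bigr)$ rather than $\cO(N)$, and hence a final bound $\cO\bigl(N^{23/8}(\log N)^2\bigr)$. These logarithms cannot be "absorbed" into $C$, since the lemma asserts $C$ is independent of $N$; and they are not an artifact of a crude bound on $\|M^{-1}\|_{op}$ alone, because the first row of $M^{-1}$ already has $\ell^2$-norm of order $\sqrt{\log N}$ (this is precisely the $\|\bs_1\|_{\ell^2}^2=\cO(1+\log N)$ computation in Corollary~\ref{COR:BOUND}), so $\|M^{-1}\|_{op}^2\gtrsim \log N$ and any submultiplicative chain of this form is lossy. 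The paper avoids this by estimating the product as a whole, row by row: using Lemma~\ref{LEM: M INV} and Gautschi's inequality it shows $\sup_n\frac{1}{4n-1}\|\bq_n\|_{\ell^1}=\cO(N)$ with no logarithms, which combined with the $\sqrt{(N+1)/2}$ aggregation factor gives the clean $N^{3/2}$ and hence $N^{23/8}$. Equivalently, in your language, you should exploit that $A=M^{-\text{T}}L^{-1}DM^{-1}$ is symmetric and bound $\|A\|_{op}$ by its maximal absolute row sum $\max_n\sum_j|\sum_k s_{k,n}(4k-3)(1-g^{2k-2})s_{k,j}|$, which is exactly the paper's $\ell^1$ quantity and is $\cO(N)$. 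With that replacement your argument delivers the stated estimate; as written, it proves only the weaker bound with extra polylogarithmic growth.
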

\begin{proof}
    Let $B_i(\cdot, \cdot)$ the bilinear form in~\eqref{EQ:BILINEAR} for coefficient pair $(\sigma_a, \sigma_{s,i})$, $i=1,2$, then for any test function $\Psi\in [H^{1}(\Omega)]^{(N+1)/2}$, we have
    \begin{equation}
    B_1(\Phi_1, \Psi) = B_2(\Phi_2, \Psi)\,.
    \end{equation}
    Denote $\delta\Phi=  [\delta\varphi_{1},\dots, \delta\varphi_{(N+1)/2}]^T := \Phi_1 - \Phi_2$,  by Cauchy-Schwartz inequality, we have
    \begin{equation}\label{EQ:B2U}
    \begin{aligned}
    B_2(\delta\Phi, \delta\Phi) &= \sum_{n=1}^{(N+1)/2} \int_{\Omega} \frac{1}{(4n-1)(1-g^{2n-1})}\left[ \frac{\delta\sigma_s}{\sigma_{s,1}\sigma_{s,2}} \nabla \varphi_{1, n} \cdot \nabla \delta\varphi_n - \delta\sigma_s \bq_n\cdot \Phi_1 \delta\varphi_n\right] d\bx \\
    &\le \theta\sum_{n=1}^{(N+1)/2} \left[ \|\nabla \delta\varphi_n\|_{L^2(\Omega)} \left\| \frac{\delta\sigma_s \nabla \varphi_{1,n}}{\sigma_{s,1}\sigma_{s,2}}\right\|_{L^2(\Omega)} + \frac{3\left\|\delta\varphi_n\right\|_{L^2(\Omega)}}{4n-1}  \left\| \delta\sigma_s \bq_n \cdot \Phi_1 \right\|_{L^2(\Omega)} \right] \\
    &\le \theta\kappa \|\delta\sigma_s\|_{L^{2}(\Omega)} \sum_{n=1}^{(N+1)/2} \|\delta\varphi_n\|_{H^1(\Omega)} \\&\le \theta{\kappa \sqrt{\hN}} \|\delta\sigma_s\|_{L^{2}(\Omega)} \|\delta\Phi\|_{[H^1(\Omega)]^{(N+1)/2}}\,,
     \end{aligned}
    \end{equation}
    where the constants $\theta = (3(1-\note{|g|}))^{-1}$, $\kappa = \sup_{n\ge 1} \left\|\nabla \varphi_{1,n}/(\sigma_{s,1}\sigma_{s,2})\right\|_{L^{\infty}(\Omega)} + \frac{3}{4n-1}\left\| \bq_n\cdot \Phi_1\right\|_{L^{\infty}(\Omega)}$. Since $\sigma_{s,1}, \sigma_{s,2}$ are bounded from below by positive constants, the first term in $\kappa$ is bounded by $\|\Phi_1\|_{[W^{1,\infty}]^{(N+1)/2}}$. The second term needs to estimate $\sup_{n\ge 1}\frac{3}{4n-1}\|\bq_n\|_{\ell^{1}}$. From the definition of $\bq_n$ in~\eqref{EQ: P Q}, we can deduce that
    \begin{equation}
    \begin{aligned}
    \frac{1}{4n-1}\|\bq_n \|_{\ell^{1}} &\le \sum_{k=2}^{(N+1)/2} (4k-3) \|\bu_{k,n}\|_{\ell^{1}} \\
    &=   \sum_{k=2}^{n} (4k-3) \sum_{j=k}^{(N+1)/2} |s_{k, n} s_{k, j}| \\
    &<\sum_{k=2}^{n} \sum_{j=k}^{(N+1)/2} \frac{(k+\frac{1}{2})(4k-3)}{(2k-1)^2} \frac{1}{\sqrt{j-\frac{1}{2}}\sqrt{n-\frac{1}{2}}}\quad \text{(Gaustchi's inequality) }\\
    &\le  \frac{1}{\sqrt{n-\frac{1}{2}}}\sum_{k=2}^n \sum_{j=k}^{(N+1)/2} \frac{2}{\sqrt{j-\frac{1}{2}}} \quad (\text{since } \frac{1}{2}(k+1)(4k-3) \le  (2k-1)^2)\\
    &= \cO(N).
    \end{aligned}
    \end{equation}
    This implies $ \kappa \le \sfc N \|\Phi_1\|_{[W^{1,\infty}(\Omega)]^{(N+1)/2}} $ for certain constant $\sfc > 0$. On the other hand, from the Corollary~\ref{COR:BOUND}, there exists a constant $C_1$ independent of $N$ that
    \begin{equation}\label{EQ:B2L}
    B_2(\delta\Phi, \delta\Phi) \ge C_1 N^{-11/8} \|\delta\Phi\|_{[H^1(\Omega)]^{(N+1)/2}}^2\,.
    \end{equation}
    Combine the estimates~\eqref{EQ:B2L} and~\eqref{EQ:B2U}, we obtain
    \begin{equation}\label{EQ:EST1}
    \|\delta\Phi\|_{H^1(\Omega)} \le \frac{\sfc}{C_1} \frac{N^{23/8} }{3(1-\note{|g|})} \|\Phi_1\|_{[W^{1,\infty}(\Omega)]^{(N+1)/2}}\|\delta\sigma_s\|_{L^{2}}\,.
    \end{equation}
\end{proof}
\begin{theorem}\label{THM: SCATTER}
	Under the assumptions $\cA$-i to $\cA$-iv, suppose $(\sigma_a, \Upsilon)$ are known, let $\sigma_{s,1}$ and $\sigma_{s,2}$ be two admissible scattering coefficients with $\sigma_{s,1} = \sigma_{s,2}$ on the boundary $\partial\Omega$. 	Let $\Phi_1$ and $\Phi_2$ the solutions to the $SP_N$ system with scattering coefficients $\sigma_{s,1}$ and $\sigma_{s,2}$ respectively. $H_1$ and $H_2$ denote the corresponding internal data for $\sigma_{s,1}$ and $\sigma_{s,2}$ respectively. Then  we have the following estimate 
    \begin{equation}
    \int_{\Omega} \cV_N(\bx;\lambda) \left( \frac{\sigma_{s,1} - \sigma_{s,2} }{\sigma_{s,2}}\right)^2 d\bx \le \frac{\sfC}{\lambda} N^2 \left\|\frac{H_1 - H_2}{\Upsilon\sigma_a}\right\|^2_{H^2(\Omega)}\,,
    \end{equation}
    where $\lambda \in (0, 2\underline{c})$ is an arbitrary constant, $\cV_N(\bx;\lambda)$ is
    \begin{equation}
    \begin{aligned}
    \cV_N(\bx) &:= (\sigma_{s,1} + 2\underline{c}-\lambda )(\bs_1\cdot \bQ\Phi_1)^2 + \kappa_N\frac{H_1}{\Upsilon}(\bs_1\cdot\bQ\Phi_1) -\frac{1}{\sigma_{s,1}}\nabla\frac{H_1}{\Upsilon\sigma_a}\cdot\nabla (\bs_1\cdot\bQ\Phi_1)- 2\overline{c}^2\cY\,,
    \end{aligned}
    \end{equation}
    with 
    \begin{equation}
    \begin{aligned}
        \cY &:= C_3 N^{39/8} \|\bs_1\cdot \bQ \Phi_1 \|_{L^{\infty}(\Omega)}\|\Phi_1\|_{W^{1,\infty}(\Omega)}\,,\\
        \kappa_N &:= \sum_{n= 1}^{(N+1)/2} (4n-1)(1 - g^{2n-1}) s_{1,n}^2\,,
    \end{aligned}
    \end{equation}
    the constants $\sfC, C_3$ are independent of $N$. When $\cV_N(\bx;\lambda) > 0$, $\forall \bx \in\Omega$, then $H_1 = H_2$ a.e. on $\Omega$ implies $\sigma_{s,1} = \sigma_{s,2}$ a.e.. 
\end{theorem}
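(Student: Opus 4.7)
The plan is to reduce the full $SP_N$ system to a single scalar diffusion equation for $\phi_0$, form the difference equation between the two admissible scattering coefficients, and then convert it into a quadratic inequality for $\delta\sigma_s$ by multiplying against a carefully chosen test function of the form $\delta\sigma_s \cdot (\bs_1\cdot\bQ\Phi_1)/\sigma_{s,2}^2$ (up to a factor of $\sigma_{s,1}$).

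\textbf{Step 1: Scalar reduction.} First, I would take the linear combination of the diffusion system \eqref{EQ:DIFF} with weights $s_{1,n}$, i.e., dot the vector equation with $\bs_1$. Since $\bp_n = (4n-1)(1-g^{2n-1})s_{1,n}\bs_1$ has rank-one structure in its index $n$, we find $\bs_1\cdot \bP \Phi = \kappa_N \phi_0$, and the scalar equation
\begin{equation*}
-\nabla\cdot\Bigl(\tfrac{1}{\sigma_s}\nabla \phi_0\Bigr) + \kappa_N \sigma_a \phi_0 + \sigma_s (\bs_1\cdot \bQ\Phi) = 0
\end{equation*}
follows, where $\phi_0 = \bs_1\cdot \Phi = H/(\Upsilon\sigma_a)$ is determined by the data. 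Writing this for $i=1,2$ and subtracting, using $\sigma_{s,1}\bs_1\bQ\Phi_1 - \sigma_{s,2}\bs_1\bQ\Phi_2 = \delta\sigma_s(\bs_1\bQ\Phi_1) + \sigma_{s,2}(\bs_1\bQ\delta\Phi)$ and $1/\sigma_{s,1}-1/\sigma_{s,2} = -\delta\sigma_s/(\sigma_{s,1}\sigma_{s,2})$, gives an identity of the form
\begin{equation*}
\delta\sigma_s (\bs_1\cdot\bQ\Phi_1) = \nabla\cdot\Bigl(\tfrac{\delta\sigma_s}{\sigma_{s,1}\sigma_{s,2}}\nabla \phi_{0,1}\Bigr) - \nabla\cdot\Bigl(\tfrac{1}{\sigma_{s,2}}\nabla\delta\phi_0\Bigr) + \kappa_N\sigma_a \delta\phi_0 - \sigma_{s,2}(\bs_1\cdot\bQ\delta\Phi),
\end{equation*}
in which $\delta\phi_0 = (H_1-H_2)/(\Upsilon\sigma_a)$ is data and $\delta\sigma_s$ vanishes on $\partial\Omega$.

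\textbf{Step 2: Energy estimate.} I would multiply the identity through by $\sigma_{s,1}\,\delta\sigma_s (\bs_1\cdot\bQ\Phi_1)/\sigma_{s,2}^2$ and integrate over $\Omega$. The left-hand side produces the principal quadratic term $\int \sigma_{s,1}(\bs_1\cdot\bQ\Phi_1)^2 (\delta\sigma_s/\sigma_{s,2})^2 d\bx$. On the right, the two divergence terms are integrated by parts (boundary contributions vanish because $\delta\sigma_s \equiv 0$ on $\partial\Omega$); the cross term carrying $\nabla\delta\sigma_s$ is symmetrized via $\nabla\delta\sigma_s\cdot\delta\sigma_s = \tfrac{1}{2}\nabla(\delta\sigma_s)^2$ and integrated by parts again, converting it into a $(\delta\sigma_s)^2$ zero-order term weighted by $\nabla\cdot(\tfrac{\bs_1\bQ\Phi_1}{\sigma_{s,1}\sigma_{s,2}}\nabla\phi_{0,1})$, which after using $\phi_{0,1} = H_1/(\Upsilon\sigma_a)$ contributes $-\tfrac{1}{\sigma_{s,1}}\nabla(H_1/(\Upsilon\sigma_a))\cdot\nabla(\bs_1\cdot\bQ\Phi_1)$ to $\cV_N$. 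The zero-order term $\kappa_N\sigma_a\delta\phi_0$ contributes the $\kappa_N\frac{H_1}{\Upsilon}(\bs_1\cdot\bQ\Phi_1)$ piece. Remaining terms involving $\delta\phi_0$ itself are purely data and are bounded via Cauchy--Schwarz by $\|(H_1-H_2)/(\Upsilon\sigma_a)\|_{H^2}$, giving the $\sfC N^2/\lambda$ right-hand side after Young's inequality with weight $\lambda$ (the factor $N^2$ reflecting $\kappa_N = \mathcal{O}(N^2)$).

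\textbf{Step 3: Control of the $\delta\Phi$ term.} The leftover cross term $\int \sigma_{s,1}(\delta\sigma_s/\sigma_{s,2})(\bs_1\cdot\bQ\Phi_1)(\bs_1\cdot\bQ\delta\Phi) d\bx$ cannot be handled by IBP since it is not data; instead I would apply Lemma~\ref{LEM: EST SCATTER} to estimate $\|\delta\Phi\|_{L^2} \le \|\delta\Phi\|_{H^1} \lesssim N^{23/8}\|\Phi_1\|_{W^{1,\infty}}\|\delta\sigma_s\|_{L^2}$. Combined with H\"older's inequality in the form $\|\bs_1\cdot\bQ\delta\Phi\|_{L^2} \le \|\bs_1\|_{\ell^2}\|\bQ\|_{op}\|\delta\Phi\|_{L^2}$, and absorbing $\|\bs_1\cdot\bQ\Phi_1\|_{L^\infty}$ out of the integral, this cross term is bounded by $\overline{c}^2 \cY \int (\delta\sigma_s/\sigma_{s,2})^2 d\bx$, which is the origin of the $-2\overline{c}^2\cY$ correction inside $\cV_N$. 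Using Young's inequality $2ab \le \lambda a^2 + b^2/\lambda$ to split the remaining contributions generates the $+2\underline{c}-\lambda$ shift in the leading coefficient.

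\textbf{Main obstacle.} The hardest part is the bookkeeping: carefully tracking the $N$-dependence through $\|\bs_1\|_{\ell^2}$, $\kappa_N$, the operator norms of $\bQ$, and the Lemma~\ref{LEM: EST SCATTER} factor $N^{23/8}$, so that each piece of $\cV_N$ comes out with the advertised form and so that $\cY = \mathcal{O}(N^{39/8})$ precisely. Once the inequality $\int \cV_N (\delta\sigma_s/\sigma_{s,2})^2 d\bx \le (\sfC/\lambda)N^2\|(H_1-H_2)/(\Upsilon\sigma_a)\|_{H^2}^2$ is in hand, the uniqueness conclusion under $\cV_N > 0$ is immediate: $H_1=H_2$ a.e.\ forces the right-hand side to vanish, hence $\delta\sigma_s \equiv 0$.
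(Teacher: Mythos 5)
Your skeleton does coincide with the paper's: contract the system with $\bs_1$ (using $\bs_1\cdot\bP=\kappa_N\bs_1$) to get a scalar equation for $\bs_1\cdot\Phi_i=H_i/(\Upsilon\sigma_a)$, difference it, test against a weight proportional to $\delta\sigma_s(\bs_1\cdot\bQ\Phi_1)$, use $\delta\sigma_s|_{\partial\Omega}=0$ to kill boundary terms, control $\bs_1\cdot\bQ\,\delta\Phi$ via Lemma~\ref{LEM: EST SCATTER} together with a bound on $\bs_1\cdot\bQ$, and finish with Young's inequality in $\lambda$. However, two of your attributions are wrong, and as a consequence the computation you describe cannot produce the stated $\cV_N$. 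First, the term $\kappa_N\frac{H_1}{\Upsilon}(\bs_1\cdot\bQ\Phi_1)$ does \emph{not} come from the zero-order term $\kappa_N\sigma_a\,\delta\phi_0$: that quantity equals $\kappa_N\,\delta H/\Upsilon$, is pure data, and is what ends up squared on the right-hand side (this is also where the factor $N^2$ comes from, since $\kappa_N=\Theta(N)$ enters squared — your claim $\kappa_N=\cO(N^2)$ is off and would give $N^4$). The $H_1$-weighted term instead arises from a step your sketch omits: after the symmetrization one is left with $\tfrac12(\delta\sigma_s/\sigma_{s,2})^2\,\nabla\cdot\bigl(\tfrac{1}{\sigma_{s,1}}\nabla\tfrac{H_1}{\Upsilon\sigma_a}\bigr)$, and this divergence must be replaced using the \emph{background} scalar equation~\eqref{EQ:DIFF3}, $\nabla\cdot\bigl(\tfrac{1}{\sigma_{s,1}}\nabla\tfrac{H_1}{\Upsilon\sigma_a}\bigr)=\kappa_N\tfrac{H_1}{\Upsilon}+\sigma_{s,1}\,\bs_1\cdot\bQ\Phi_1$; that substitution is the source of both the $\kappa_N\tfrac{H_1}{\Upsilon}(\bs_1\cdot\bQ\Phi_1)$ term and the $\sigma_{s,1}$ part of the leading coefficient.

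Second, Young's inequality cannot ``generate'' the $+2\underline{c}$ shift — it only subtracts $\lambda$. In the paper the $2\underline{c}$ comes from keeping the $\delta\sigma_s(\bs_1\cdot\bQ\Phi_1)$ term on the left, where tested against $\tfrac{\delta\sigma_s}{\sigma_{s,2}}(\bs_1\cdot\bQ\Phi_1)$ it contributes $2\sigma_{s,2}\ge 2\underline{c}$; you instead isolate exactly that term as your entire left-hand side, so your accounting can only yield $\sigma_{s,1}-\lambda$ in the leading coefficient. Moreover, your multiplier $\sigma_{s,1}\delta\sigma_s(\bs_1\cdot\bQ\Phi_1)/\sigma_{s,2}^2$ carries an extra factor $\sigma_{s,1}/\sigma_{s,2}$ relative to the paper's $\tfrac{\delta\sigma_s}{\sigma_{s,2}}\cdot(\bs_1\cdot\bQ\Phi_1)$; after the double integration by parts this produces terms containing $\nabla\sigma_{s,1}$ and $\nabla\sigma_{s,2}$, which do not appear in the stated $\cV_N$. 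The paper's weight is chosen precisely so that the identity $u\,\nabla\cdot(uF)=\tfrac12u^2\nabla\cdot F+\tfrac12\nabla\cdot(u^2F)$ with $u=\delta\sigma_s/\sigma_{s,2}$ and $F=\tfrac{1}{\sigma_{s,1}}\nabla\tfrac{H_1}{\Upsilon\sigma_a}$ eliminates all derivatives of the unknown coefficient. (Minor further issues: the first three terms in your difference identity have flipped signs, and the route $\|\bs_1\cdot\bQ\,\delta\Phi\|_{L^2}\le\|\bs_1\|_{\ell^2}\|\bQ\|_{op}\|\delta\Phi\|_{L^2}$ is weaker than the direct bound $\|\bs_1\cdot\bQ\|_{\ell^2}=\cO(N^2)$ of Lemma~\ref{LEM: SQ BD} and risks a logarithmic loss in $\cY$.) Your variant would at best give a conditional statement with a different, less clean weight, not the theorem as stated.
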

\begin{proof}
For $n=1,\dots, (N+1)/2$, we have the following $SP_N$ systems for $\Phi_1$ and $\Phi_2$,
	\begin{equation}\label{EQ:DIFF2}
	\begin{aligned}
	-\nabla\cdot \left(\frac{1}{\sigma_{s,1}} \nabla \varphi_{1,n} \right) + \sigma_a \bp_n\cdot \Phi_1 + \sigma_{s,1} \bq_n \cdot \Phi_1 &= 0\,, \\
	-\nabla\cdot \left(\frac{1}{\sigma_{s,2}} \nabla \varphi_{2,n} \right) + \sigma_a \bp_n \cdot \Phi_2 + \sigma_{s,2} \bq_n \cdot \Phi_2 &= 0\,,
	\end{aligned}
	\end{equation}
    with the mixed boundary condition~\eqref{EQ:BD}.
	Since $\Upsilon$ and $\sigma_a$ are known, the measurements are given by
	\begin{equation}
	\bs_1 \cdot \Phi_i = \frac                                                                                                                                                               {H_i}{\Upsilon\sigma_a},\quad i=1,2\,.
	\end{equation}
	Therefore multiply~\eqref{EQ:DIFF2} with $s_{1,n}$ and take summation over $n$, we get
	\begin{equation}\label{EQ:DIFF3}
	-\nabla\cdot \left(\frac{1}{\sigma_{s, i}} \nabla \frac{H_i}{\Upsilon\sigma_a}\right) +\bs_1\cdot \left( \sigma_a \bP \Phi_i + \sigma_{s,i} \bQ \Phi_i\right) = 0,\quad i =1,2  \,.
	\end{equation}
	Take the difference between equations~\eqref{EQ:DIFF3} with $\sigma_{s,1}$ and $\sigma_{s,2}$, respectively. Let $\delta\sigma_s = \sigma_{s,1} - \sigma_{s,2}$, $\delta\Phi = \Phi_1 - \Phi_2$ and $\delta H  = H_1 - H_2$, then
	\begin{equation}\label{EQ:LINEARIZE}
	-\nabla\cdot\left(\frac{-\delta\sigma_s}{\sigma_{s,1}\sigma_{s,2}}\nabla \frac{H_1}{\Upsilon\sigma_a}\right)- 	\nabla\cdot \left(\frac{1}{\sigma_{s, 2}} \nabla \frac{\delta H}{\Upsilon\sigma_a}\right) +\bs_1 \cdot \left(\sigma_a \bP  \delta\Phi + \delta\sigma_s \bQ \Phi_1 + \sigma_{s,2}\bQ \delta\Phi\right) = 0\,.
	\end{equation}
	Use the following identity, 
	\begin{equation}
	 \frac{\delta\sigma_s}{\sigma_{s,2}} \nabla \cdot \left(\frac{\delta\sigma_s}{\sigma_{s,2}} \frac{1}{\sigma_{s,1}} \nabla \frac{H_1}{\Upsilon\sigma_a} \right) = \frac{1}{2}\left(\frac{\delta\sigma_s}{\sigma_{s,2}}\right)^2 \nabla \cdot \left(\frac{1}{\sigma_{s,1}} \nabla \frac{H_1}{\Upsilon\sigma_a}\right) + \frac{1}{2}\nabla \cdot \left(\left(\frac{\delta \sigma_s}{\sigma_{s,2}}\right)^2 \frac{1}{\sigma_{s,1}}\nabla \frac{H_1}{\Upsilon\sigma_a}\right)\,,
	\end{equation}
	we multiply~\eqref{EQ:LINEARIZE} with $\delta \sigma_s/\sigma_{s,2}$, then
	\begin{equation}\label{EQ:SIGMA1}
	\begin{aligned}
&\frac{1}{2}\left(\frac{\delta\sigma_s}{\sigma_{s,2}}\right)^2 \nabla \cdot \left(\frac{1}{\sigma_{s,1}} \nabla \frac{H_1}{\Upsilon\sigma_a}\right)+ \frac{1}{2}\nabla \cdot \left(\left(\frac{\delta \sigma_s}{\sigma_{s,2}}\right)^2 \frac{1}{\sigma_{s,1}}\nabla \frac{H_1}{\Upsilon\sigma_a}\right)\\& - \frac{\delta \sigma_s}{\sigma_{s,2}}	\nabla\cdot \left(\frac{1}{\sigma_{s, 2}} \nabla \frac{\delta H}{\Upsilon\sigma_a}\right) + \frac{\delta \sigma_s}{\sigma_{s,2}}\bs_1\cdot \left(\sigma_a \bP \delta\Phi + \delta\sigma_s \bQ \Phi_1 + \sigma_{s,2}\bQ \delta\Phi\right) = 0\,.
	\end{aligned}
	\end{equation}
The first term can be replaced from~\eqref{EQ:DIFF3} that
\begin{equation}\label{EQ:SIGMA2}
\frac{1}{2}\left(\frac{\delta\sigma_s}{\sigma_{s,2}}\right)^2 \nabla \cdot \left(\frac{1}{\sigma_{s,1}} \nabla \frac{H_1}{\Upsilon\sigma_a}\right) = \frac{1}{2}\left(\frac{\delta\sigma_s}{\sigma_{s,2}}\right)^2 \bs_1\cdot \left( \sigma_a \bP \Phi_1 + \sigma_{s,1} \bQ \Phi_1\right)\,,
\end{equation}
then combine~\eqref{EQ:SIGMA1} and~\eqref{EQ:SIGMA2}, multiply the test function $\psi=\bs_1\cdot \bQ\Phi_1\in H^1(\Omega)$ to~\eqref{EQ:SIGMA1} and integrate over $\Omega$. Notice that $\delta\sigma_s = 0$ on $\partial\Omega$, we obtain
\begin{equation}\label{EQ:KEY}
\begin{aligned}
&\frac{1}{2}\int_{\Omega} \left(\frac{\delta\sigma_s}{\sigma_{s,2}}\right)^2\left[ \bs_1\cdot \left( \sigma_a\bP  + \sigma_{s,1} \bQ  + 2\sigma_{s,2} \bQ \right)\Phi_1(\bs_1\cdot \bQ\Phi_1)-\frac{1}{\sigma_{s,1}}\nabla\frac{H_1}{\Upsilon\sigma_a}\cdot\nabla (\bs_1\cdot\bQ\Phi_1)\right]  d\bx \\
&- \int_{\Omega}  \frac{\delta \sigma_{s}}{\sigma_{s,2}}\left[ \nabla\cdot \left( \frac{1}{\sigma_{s,2}}\nabla \frac{\delta H}{\Upsilon\sigma_a}\right)   - \bs_1 \cdot(\sigma_a\bP \delta\Phi  + \sigma_{s,2} \bQ \delta \Phi) \right] (\bs_1\cdot \bQ\Phi_1) d\bx= 0\,.
\end{aligned}
\end{equation}
In the next, observe that
\begin{equation}
\begin{aligned}
\bs_1\cdot \bP = \sum_{n= 1}^{(N+1)/2} (4n-1)(1 - g^{2n-1}) s_{1,n} \bu_{1,n}  = \left(\sum_{n= 1}^{(N+1)/2}  (4n-1)(1 - g^{2n-1}) s_{1,n}^2\right) \bs_{1} \,,
\end{aligned}
\end{equation}
therefore in~\eqref{EQ:KEY}, we can replace
\begin{equation}
\begin{aligned}
\sigma_a \bs_1\cdot \bP\delta\Phi = \left(\sum_{n= 1}^{(N+1)/2} (4n-1)(1 - g^{2n-1}) s_{1,n}^2\right) \frac{\delta H}{\Upsilon}\,, \\\sigma_a \bs_1\cdot \bP \Phi_1 = \left(\sum_{n= 1}^{(N+1)/2} (4n-1)(1 - g^{2n-1}) s_{1,n}^2\right) \frac{H_1}{\Upsilon} \,.
\end{aligned}
\end{equation}
Define the constant $\kappa_N := \sum_{n = 1}^{(N+1)/2} (4n-1)(1 - g^{2n-1}) s_{1,n}^2$, since $|s_{1,n}|^2 = \Theta(n^{-1})$, then $\kappa_N = \Theta(N)$, the equation~\eqref{EQ:KEY} can be further reduced to
\begin{equation}\label{EQ:KEY2}
\begin{aligned}
&\frac{1}{2}\int_{\Omega} \left(\frac{\delta\sigma_s}{\sigma_{s,2}}\right)^2\left[\left(\sigma_{s,1}  + 2\sigma_{s,2}  \right) (\bs_1\cdot \bQ\Phi_1)^2 + \kappa_N \frac{H_1}{\Upsilon} (\bs_1\cdot \bQ\Phi_1)-\frac{1}{\sigma_{s,1}}\nabla\frac{H_1}{\Upsilon\sigma_a}\cdot\nabla (\bs_1\cdot \bQ\Phi_1)\right]  d\bx\\
&= \int_{\Omega} \frac{\delta \sigma_s}{\sigma_{s,2}}\left[ \nabla\cdot \left( \frac{1}{\sigma_{s,2}}\nabla \frac{\delta H}{\Upsilon\sigma_a}\right) - \kappa_N \frac{\delta H}{\Upsilon}   -  \sigma_{s,2} \left(\bs_1\cdot \bQ \delta \Phi\right) \right](\bs_1\cdot \bQ\Phi_1)d\bx\,.
\end{aligned}
\end{equation}
Due to Lemma~\ref{LEM: SQ BD}, $\|\bs_1\cdot \bQ\|_{\ell^{p}}\le C_2 N^{3/2+1/p} $ for certain constant $C_2$ independent of $N$, then combine with Lemma~\ref{LEM: EST SCATTER},  the second term on right-hand-side of~\eqref{EQ:KEY2} is bounded by 
\begin{equation}\nonumber
\begin{aligned}
\big|\int_{\Omega} \delta \sigma_{s} \left(\bs_1\cdot \bQ \delta \Phi\right)(\bs_1\cdot \bQ\Phi_1) d\bx \big| &\le \|\bs_1\cdot \bQ\|_{\ell^2}  \left\|\delta\sigma_s\right\|_{L^2(\Omega)} \|\delta\Phi\|_{L^2(\Omega)}\|\bs_1\cdot \bQ \Phi_1\|_{L^{\infty}(\Omega)}\\
&\le C_2 N^{2} \left\|\delta\sigma_s\right\|_{L^2(\Omega)} \|\delta\Phi\|_{L^2(\Omega)} \|\bs_1\cdot \bQ\Phi_1\|_{L^{\infty}(\Omega)}\\
&\le C_3 N^{39/8} \|\Phi_1\|_{W^{1,\infty}(\Omega)} \|\bs_1\cdot\bQ\Phi_1\|_{L^{\infty}(\Omega)} \left\|\delta\sigma_s\right\|_{L^2(\Omega)}^2\,,
\end{aligned}
\end{equation}
where the constant $C_3$ is independent of $N$ as well. Let $\cX$ and $\cY$ denote the following quantities
\begin{equation}
\begin{aligned}
\cX&:=  \kappa_N\frac{H_1}{\Upsilon}(\bs_1\cdot\bQ\Phi_1) -\frac{1}{\sigma_{s,1}}\nabla\frac{H_1}{\Upsilon\sigma_a}\cdot\nabla (\bs_1\cdot\bQ\Phi_1)\,, \\
\cY &:= C_3 N^{39/8} \|\Phi_1\|_{W^{1,\infty}(\Omega)} \|\bs_1\cdot \bQ \Phi_1\|_{L^{\infty}(\Omega)} \,,
\end{aligned}
\end{equation}
then we obtain the following inequality,
\begin{equation}
\begin{aligned}
&\frac{1}{2}\int_{\Omega} \left(\frac{\delta\sigma_s}{\sigma_{s,2}}\right)^2 \left[(\sigma_{s,1} + 2\sigma_{s,2})(\bs_1\cdot \bQ\Phi_1)^2 + \cX- 2\sigma_{s,2}^2\cY\right] d\bx \\
&\le  \int_{\Omega} \frac{\delta \sigma_s}{\sigma_{s,2}}\left( \nabla\cdot \left( \frac{1}{\sigma_{s,2}}\nabla \frac{\delta H}{\Upsilon\sigma_a}\right) - \kappa_N \frac{\delta H}{\Upsilon} \right) (\bs_1\cdot \bQ\Phi_1) d\bx \\
&\le \frac{\lambda}{2}\int_{\Omega} \left[\frac{\delta \sigma_s}{\sigma_{s,2}}(\bs_1\cdot\bQ\Phi_1)\right]^2 d\bx + \frac{1}{ 2\lambda}\int_{\Omega}\left( \nabla\cdot \left( \frac{1}{\sigma_{s,2}}\nabla \frac{\delta H}{\Upsilon\sigma_a}\right) - \kappa_N \frac{\delta H}{\Upsilon} \right)^2d\bx\,,
\end{aligned}
\end{equation}
where $\lambda > 0$ is an arbitrary number and the last inequality has used the AM-GM inequality. For the uniqueness, we let $\delta H = 0$ in above inequality, then it becomes
\begin{equation}
\int_{\Omega} \left(\frac{\delta\sigma_s}{\sigma_{s,2}}\right)^2 \left[(\sigma_{s,1} + 2\sigma_{s,2} -\lambda )(\bs_1\cdot \bQ\Phi_1)^2 + \cX -2 \sigma_{s,2}^2\cY \right] d\bx  \le 0\,.
\end{equation}
Recall that $\underline{c}\le \sigma_{s,2}\le \overline{c}$, therefore if
\begin{equation}
(\sigma_{s,1} + 2\underline{c}-\lambda )(\bs_1\cdot \bQ\Phi_1)^2 + \kappa_N\frac{H_1}{\Upsilon}(\bs_1\cdot\bQ\Phi_1) -\frac{1}{\sigma_{s,1}}\nabla\frac{H_1}{\Upsilon\sigma_a}\cdot\nabla (\bs_1\cdot\bQ\Phi_1) > 2\overline{c}^2\cY\,,
\end{equation}
we could conclude that $\delta\sigma_s = 0$ a.e.. The stability estimate is straightforward by noticing $\kappa_N = \cO(N)$.
\end{proof}
\begin{remark}
    As $N\to\infty$, the requirement that $\cV(x;\lambda) > 0$ could be difficult to fulfill since the growth of $\cY$ is much faster than the other terms. The estimate could be greatly improved by giving a tighter bound to $\int_{\Omega} \delta\sigma_s (\bs_{1}\cdot \bQ\delta\Phi)(\bs_1\cdot \bQ\Phi_1) d\bx$, for instance,  estimate the Frech\'et derivative of $\frac{\delta\Phi}{\delta\sigma_s}$.
\end{remark}
In the next, we focus on two important cases of simultaneous reconstructions: $(\Upsilon, \sigma_a)$ and $(\sigma_a, \sigma_s)$ with multiple illumination sources. In $SP_1$      , one can only reconstruct any two coefficients with the knowledge about the third one~\cite{bal2011multi,ren2013hybrid} and it is impossible to recover all of them unless additional information is provided. In $SP_N$, it is still unclear whether or not all of the coefficients can be recovered uniquely. 

\vspace{0.5cm}
\noindent {\bf{Reconstruction of $\sigma_a$ and $\Upsilon$.}}
In this case, we consider the simultaneous reconstruction of both $\sigma_a$ and $\Upsilon$ with multiple sources $f_j$, $1\le j\le J$ ($J\ge 2$). We denote $H_j$ the corresponding measurement for $f_j$ from qPAT. The simplest case $SP_1$ is studied in~\cite{bal2011multi,ren2013hybrid} and linearized case of $SP_3$ is discussed in~\cite{ frederick2018image}. The key observation is that the ratio of two measurements is independent of the coefficients, which only implicitly depends on $\sigma_a$. 
We first consider the linearized setting, suppose the scattering coefficient $\sigma_s$  and the background absorption and Gr\"{u}neisen coefficients $\sigma_{a}, \Upsilon$ are known and the perturbations are $\delta\note{\sigma_a}, \delta\Upsilon$. For each $1\le j\le J$, suppose $\Phi_j$ is the background solution with source $f_j$, the perturbation in the solution is denoted by $\delta\Phi_j$, then by linearizing~\eqref{EQ:DIFF}, $\delta\Phi_j = [\delta\varphi_{j,1},\dots, 
\delta\varphi_{(N+1)/2}]^{\text{T}}$ satisfies the linearized $SP_N$ system
\begin{equation}\label{EQ: DELTA PHI}
-\nabla\cdot \left(\frac{1}{\sigma_s} \nabla \delta\varphi_{j,n} \right) + \sigma_a \bp_n \cdot \delta\Phi_j + \sigma_s \bq_n \cdot  \delta\Phi_j = -\delta \sigma_a \bp_n \cdot \Phi_j,\quad n=1,\dots, (N+1)/2\,.
\end{equation}
For any pair of indices $1\le i <  j\le J$, we define the following quantity 
\begin{equation}
\cH_{ij} := (\bs_1\cdot \Phi_i)\frac{\delta H_j}{\sigma_a\Upsilon} - (\bs_1\cdot \Phi_j)\frac{\delta H_i}{\sigma_a\Upsilon} = (\bs_1\cdot \Phi_i)(\bs_1\cdot \delta\Phi_j) - (\bs_1\cdot \Phi_j)(\bs_1\cdot \delta\Phi_i)\,,
\end{equation}
which is known and only depends on $\delta \sigma_a$ and independent of $\Upsilon$. Our reconstruction will be a natural two-step process, first solve $\delta\sigma_a$ from the crossing quantity $\cH_{ij}$ ($1\le i < j \le J$), then solve $\delta\Phi_i$ using the recovered $\delta\sigma_a$, finally if $\sigma_a > 0$ find $\delta\Upsilon$ through
\begin{equation}
\delta\Upsilon =  \frac{1}{J}\sum_{j = 1}^J\frac{\delta H_j - \Upsilon\sigma_a \bs_1\cdot \delta\Phi_j- \Upsilon\delta\sigma_a\bs_1\cdot \Phi_j}{\sigma_a\bs_1\cdot \Phi_j}\,.
\end{equation}
By taking linearization over the bilinear form~\eqref{EQ:BILINEAR}, for any test function $\Psi\in [H^1(\Omega)]^{(N+1)/2}$,
    \begin{equation}
    B(\delta\Phi_j, \Psi) = -\int_{\Omega} \left[\delta\sigma_a(\bs_1\cdot \Phi_j)\bs_1  \right]\cdot \Psi d\bx\,.
    \end{equation}
    Take $\Psi = \delta\Phi_j$ \note{and use the fact 
    \begin{equation}
        B(\delta\Phi_j, \delta\Phi_j) = \tilde{B}(\delta\Phi_j, \delta\Phi_j) + \int_{\Omega} \sigma_a (\bs_1\cdot \delta\Phi_j)^2 d\bx \ge  \tilde{B}(\delta\Phi_j, \delta\Phi_j),
    \end{equation}
    } we can easily conclude the following estimates from the coerciveness of \note{$\tilde{B}(\cdot, \cdot)$},
    \begin{equation}
    \begin{aligned}
    \|\delta\Phi_j\|_{[H^1(\Omega)]^{(N+1)/2}}& =\cO\left( N^{11/8}\|\delta \sigma_a(\bs_1\cdot \Phi_j)\|_{L^2(\Omega)} \|\bs_1\|_{\ell^2}\right)\\&= \cO\left( N^{11/8}(1+\log N)\|\delta \sigma_a(\bs_1\cdot \Phi_j)\|_{L^2(\Omega)}\right)\,,
    \end{aligned}
    \end{equation}
   On the other hand, multiply~\eqref{EQ: DELTA PHI} with $s_{1,n}$ and sum over $n$,
    \begin{equation}
    \delta\sigma_a \kappa_N (\bs_{1}\cdot \Phi_j) = -\nabla\cdot \left(\frac{1}{\sigma_s}\nabla (\bs_{1}\cdot \delta\Phi_j)\right) +\sigma_a \kappa_N \bs_1\cdot \delta\Phi_j + \sigma_{s} \bs_1\cdot \bQ \delta\Phi_j\,,
    \end{equation}
    therefore we have a straightforward estimate
    \begin{equation}
    \|\delta\sigma_a (\bs_1\cdot \Phi_j)\|_{L^2(\Omega)} =\cO\left( \frac{\max(\kappa_N\|\bs_1\|_{\ell^2}, \|\bs_1\cdot \bQ\|_{\ell^2})}{\kappa_N} \|\delta\Phi_j\|_{[H^2(\Omega)]^{(N+1)/2}}\right) \,,
    \end{equation}
    where the constant $\kappa_N = \sum_{n = 1}^{(N+1)/2} (4n-1)(1 - g^{2n-1}) s_{1,n}^2 = \Theta(N)$ and $\|\bs_1\cdot \bQ\|_{\ell^2} \le \cO(N^2)$ from Lemma~\ref{LEM: SQ BD}, \note{where $\Theta$ denotes the Laudau big Theta notation}. These two estimates imply that
    \begin{equation}
    c^{-1}N^{-11/8}(1 + \log N)^{-1} \|\delta \Phi_j\|_{[H_1(\Omega)]^{(N+1)/2}}\le \|\delta\sigma_a (\bs_1\cdot \Phi_j)\|_{L^2(\Omega)} \le  cN\|\delta \Phi_j\|_{[H^2(\Omega)]^{(N+1)/2}}.
    \end{equation}
    for some constant $c>1$ independent of $N$. 
    
    Therefore if there exists two source distinct functions $f_i, f_j$ such that the linear mapping $\delta\sigma_a \mapsto \cH_{ij}$ is invertible and $\bs_1\cdot \Phi_j\neq 0$ over $\Omega$, then one can recover both $\delta\sigma_a$ and $\delta\Phi_j$ from $\cH_{ij}$ uniquely. In general such problem is ill-posed due to the compactness of the mapping $\delta\sigma_a\mapsto \cH_{ij}$, numerical reconstruction of $\delta\sigma_a$ can be done through the following minimization formulation with regularization,
    \begin{equation}\nonumber
    \delta\sigma_a^{\ast} = \argmin_{\delta\sigma_a} \sum_{1\le i < j \le J}\left[  \Big\|\cH_{ij} - \left[(\bs_1\cdot \Phi_i)(\bs_1\cdot \delta\Phi_j) - (\bs_1\cdot \Phi_j)(\bs_1\cdot \delta\Phi_i) \right]\Big\|_{L^2(\Omega)}^2\right] +\alpha \Big\|\nabla \delta\sigma_a\Big\|_{L^2(\Omega)}^2\,,
    \end{equation}
    where $\alpha$ is the regularization parameter.
    
     Particularly, when the background absorption coefficient $\sigma_a = 0$ or negligible, then we approximately have $\delta H_i = \delta\sigma_a \Upsilon \bs_1\cdot \Phi_i$, which does not contain the perturbation $\delta\Upsilon$, in this case, we can only reconstruct $\delta\sigma_a$, the stability estimate is similar to the Theorem~\ref{THM:1}. 
     
     Without linearization, we take the ratio of two data sets $H_i$ and $H_j$, then 
     \begin{equation}
     \frac{H_i}{H_j} = \frac{\bs_1\cdot \Phi_i}{\bs_1\cdot \Phi_j}\,,
     \end{equation}
     which only depends on $\sigma_a$, therefore our reconstruction strategy is similar to the linearized case. First, try to solve the minimization problem:
     \begin{equation}
     \sigma_a^{\ast} = \argmin_{\sigma_a} \sum_{1\le i < j \le J} \|H_i(\bs_1\cdot \Phi_j) - H_j(\bs_1\cdot \Phi_i)\|^2_{L^2(\Omega)} + \alpha \|\nabla \sigma_a\|_{L^2(\Omega)}^2\,.
     \end{equation}
     Then compute $\Upsilon^{\ast} =\frac{1}{J} \sum_{1\le i\le J}\frac{H_i}{\sigma_a^{\ast} \bs_1\cdot\Phi_i}$ with the reconstructed $\sigma_a^{\ast}$.
     
\vspace{0.5cm}
\noindent {\bf{Reconstruction of $\sigma_s$ and $\sigma_a$.}}
We consider the simultaneous reconstruction of both $\sigma_s$ and $\sigma_a$ from multiple sources $f_j$, $1\le j\le J$ provided that $\Upsilon$ is known. Similar to the previous case, we denote $H_j$ the measurement for $f_j$ from the qPAT experiments. Under the linearized setting, let $\sigma_a$ and $\sigma_s$ the background absorption and scattering coefficients, the corresponding perturbations are $\delta\sigma_a$ and $\delta\sigma_s$. For each source $f_j$, let $\Phi_j$ the background solution and the perturbation in the solution is $\delta\Phi_j$, the corresponding perturbation in the measurement is $\delta H_j$. Linearize the variational form~\eqref{EQ:BILINEAR2},  we obtain the following equation,
\begin{equation}\nonumber
\begin{aligned}
\widetilde{B}(\delta\Phi_j, \Psi) = -\int_{\Omega} \frac{\delta H_j}{\Upsilon}(\bs_1\cdot \Psi) d\bx +\sum_{n=1}^{(N+1)/2} \int_{\Omega}\frac{\delta\sigma_s}{(4n-1)(1-g^{2n-1})}\left[ \frac{1}{\sigma_{s}^2} \nabla \varphi_{1, n} \cdot \nabla \psi_n -  \bq_n\cdot \Phi_j \psi_n\right] d\bx\,,
\end{aligned}
\end{equation}
where the bilinear form $\widetilde{B}(\cdot,\cdot)$ is from~\eqref{EQ:BILINEAR3} and $\bq_n$ is defined in~\eqref{EQ: P Q}. Hence the perturbation $\delta\Phi_j$ only linearly depends on $\delta\sigma_s$. On the other hand, since $\delta H_j/ \Upsilon = \delta\sigma_a \bs_1\cdot \Phi_j + \sigma_a \bs_1\cdot\delta\Phi_j$, the crossing quantity $\cH_{ij} = \frac{1}{\sigma_a\Upsilon^2}(H_i\delta H_j - H_j \delta H_i) = (\bs_1\cdot \Phi_i)(\bs_1\cdot \delta\Phi_j) - (\bs_1\cdot \Phi_j)(\bs_1\cdot\delta\Phi_i)$ only linearly depends on $\delta\sigma_s$, therefore we first try to reconstruct $\delta\sigma_s$ from $\cH_{ij}$, then find $\delta\Phi_j$ and recover $\delta\sigma_a$ using 
\begin{equation}
\delta\sigma_a = \frac{1}{J}\sum_{j=1}^J\left[ \left(\frac{\delta H_j}{\Upsilon} - \sigma_a \bs_1\cdot \delta\Phi_j\right)/(\bs_1\cdot\Phi_j)\right]\,.
\end{equation}
Similar to the previous case, the uniqueness is immediate if the crossing term $\cH_{ij}$ as a linear functional of $\delta\sigma_s$ is uniquely solvable and $\bs_1\cdot \Phi_j\neq 0$ over $\Omega$. However since $\delta\sigma_s\mapsto \delta\Phi_j$ is a compact mapping, the inverse problem is ill-posed. Numerically, we consider the following $L^2$ optimization formulation with regularization:
 \begin{equation}\nonumber
 \delta\sigma_s^{\ast} = \argmin_{\delta\sigma_s}\sum_{1\le i < j \le J} \left[\Big\|\cH_{ij} - \left[(\bs_1\cdot\Phi_i)(\bs_1\cdot \delta\Phi_j) - (\bs_1\cdot\Phi_j)(\bs_1\cdot\delta\Phi_i) \right]\Big\|_{L^2(\Omega)}^2\right]  +\alpha \Big\|\nabla \delta\sigma_s\Big\|_{L^2(\Omega)}^2\,.
 \end{equation}

Additionally, if we are provided \emph{a priori} estimate on the perturbation $\delta\sigma_a$  that  $\|\frac{\delta\sigma_a H_i}{\Upsilon\sigma_a^2}\|_{H^2(\Omega)}\le \eps\ll 1$ for certain $1\le i\le J$, then linearize the equation~\eqref{EQ:DIFF3} for $\Phi_i$, we obtain
\begin{equation}\label{EQ:LINEARIZE2}
-\nabla\cdot\left(\frac{-\delta\sigma_s}{\sigma_{s}^2}\nabla \bs_1\cdot\Phi_i\right)- 	\nabla\cdot \left(\frac{1}{\sigma_{s}} \nabla \bs_1\cdot\delta\Phi_i\right) +\kappa_N\frac{\delta H_i}{\Upsilon}+\bs_1\cdot \left(\delta\sigma_s \bQ \Phi_i + \sigma_{s}\bQ \delta\Phi_i\right) = 0\,,
\end{equation}
where $\bQ$ is defined in~\eqref{EQ: P Q} and $\kappa_N =\sum_{n = 1}^{(N+1)/2} (4n-1)(1 - g^{2n-1}) s_{1,n}^2$. Following the similar approach in Theorem~\ref{THM: SCATTER}, 
\begin{equation}\label{EQ:KEY2 CASE5}
\begin{aligned}
&\frac{1}{2}\int_{\Omega} \left(\frac{\delta\sigma_s}{\sigma_{s}}\right)^2\left[\kappa_N\frac{H_i}{\Upsilon}(\bs_1\cdot\bQ\Phi_i) + 3\sigma_{s} (\bs_1\cdot\bQ\Phi_i)^2-\frac{1}{\sigma_{s}}\nabla(\bs_1\cdot\Phi_i)\cdot\nabla (\bs_1\cdot\bQ\Phi_i)\right]  d\bx\\
&= \int_{\Omega} \frac{\delta \sigma_s}{\sigma_{s}}\left[ \nabla\cdot \left( \frac{1}{\sigma_{s}}\nabla \bs_1\cdot\delta\Phi_i\right) - \kappa_N \frac{\delta H_i}{\Upsilon}   -  \sigma_{s} \left(\bs_1\cdot\bQ \delta \Phi_i\right) \right](\bs_1\cdot\bQ\Phi_i)d\bx\,,
\end{aligned}
\end{equation}
Replace $\bs_1\cdot\delta\Phi_i = (\delta H_i - \delta\sigma_a H_i/\sigma_a) / (\Upsilon \sigma_a)$ and 
we immediately get the following estimate from the argument of Theorem~\ref{THM: SCATTER} that
\begin{equation}
\begin{aligned}
\int_{\Omega} \cV_N(\bx) \left( \frac{\delta\sigma_{s}}{\sigma_{s}}\right)^2 d\bx &\le \sfC \left(N^2 \left\|\frac{\delta H_i}{\Upsilon\sigma_a}\right\|^2_{H^2(\Omega)} + \left\|\frac{H_i\delta \sigma_a}{\Upsilon\sigma_a^2}\right\|^2_{H^2(\Omega)}\right) \\
&\le \sfC \left(N^2 \left\|\frac{\delta H_i}{\Upsilon\sigma_a}\right\|^2_{H^2(\Omega)} +\eps^2\right)\,,
\end{aligned}
\end{equation}
where $\cV_N(\bx)$ is the same as in the Theorem~\ref{THM: SCATTER}, $\sfC$ is a constant independent of $N$. 
\section{Numerical experiments}
\label{SEC:Num}
In this section, we perform our numerical experiments in two phases: (i). Assume the true model is certain $SP_N$ system and then reconstruct the coefficients with exactly the same model; (ii). Assume the true model is either radiative transport equation or certain high order $SP_N$ system, then the reconstruction is performed over a low order $SP_N$ system. 

In all the following numerical experiments, we use the unit square in 2D as our domain $\Omega$. It is worthwhile to notice that all the previous arguments are meant for 3D only, the 2D experiments here should be interpreted as special cases (e.g. infinite tube) of 3D, assuming the solution is independent of the third dimension. \note{If the true model is the $SP_N$ system,} the forward problem is solved though finite element method on a sufficiently fine mesh and the inverse problem is solved on a different mesh to avoid inverse crime. \note{If the true model is the radiative transport equation, there are many fast forward solvers available~\cite{ren2019fast, ren2019separability, fan2019fast, gao2013analysis}, we select the finite element method implementation mentioned in~\cite{li2020inverse} for convenience purpose.} The source code for the numerical experiments is hosted on GitHub\footnote{\href{https://github.com/lowrank/spn_qpat}{https://github.com/lowrank/spn\_qpat}}.

\subsection{Experiment setting}
\label{SEC: EXP SET}
In the following numerical experiments, we will use the boundary source functions $f_1(x, y) = 1+ x$, $f_2(x,y) = 1 + \sin(4 \pi x)$ and the anisotropy constant $g = 0.8$. The coefficients $(\sigma_a, \sigma_s, \Upsilon)$ are selected from the following variable set, see Fig~\ref{FIG: COEF SET 1}. 
\begin{figure}[!htb]
    \centering
    \includegraphics[scale=0.393]{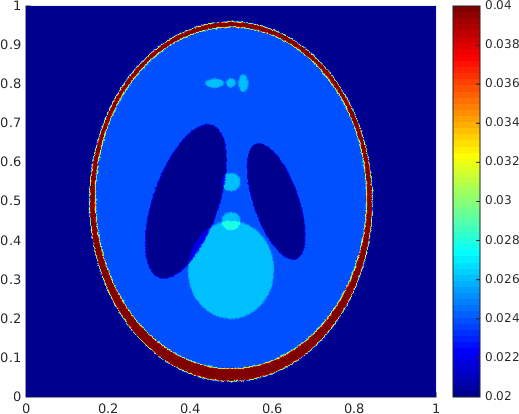}
    \includegraphics[scale=0.393]{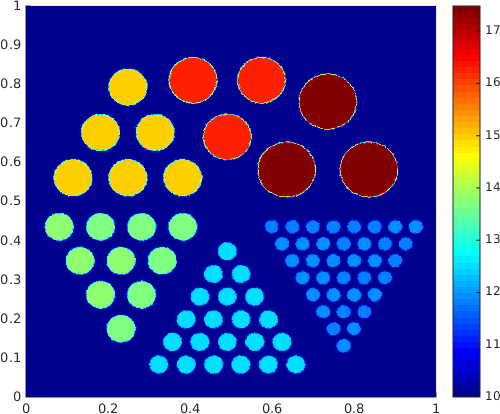}
    \includegraphics[scale=0.393]{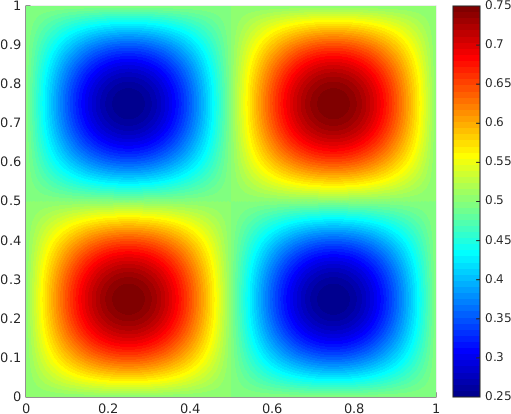}
    \caption{Coefficient Set. From left to right: absorption coefficient $\sigma_{a}$ (Shepp-Logan phantom), scattering coefficient $\sigma_{s}$ (Jaszczak phantom), Gr\"uneisen coefficient $\Upsilon$.}
    \label{FIG: COEF SET 1}
\end{figure}

\subsection{Validation of approximation}
\label{SEC: VALID APPOX}
Before we start to run any of the numerical experiments, we need to verify that if our $SP_N$ model is valid under these settings,  which means modeling error should not be dominating (in practice there are noises in data). Therefore it is important to compare the quantity $\phi_0^N = \bs_1\cdot \Phi = \frac{H}{\sigma_a \Upsilon}$ with the solution's angular average $U(\bx) = \int_{\bbS^{d-1}} u(\bx, \bv) d \bv$ for the radiative transport equation~\eqref{EQ:RTE}. We summarize the $L^2$ relative errors: $\frac{\|\phi_0(\bx) - U(\bx)\|}{\|U(\bx)\|}$ with respect to different $SP_N$ models in Tab~\ref{TAB: VALID}.

\begin{table}[!htb]
    \centering
    \caption{Relative $L^2$ error of the $\phi_0^N$ from $SP_N$ equations with the angular averaged solution $U(\bx)$ from radiative transport equation for boundary sources $f_1$ and $f_2$.}
    \label{TAB: VALID}
    \begin{tabular}{||c|c|c|c|c|c|c|c|c|c||}
        \hline
       & $SP_1$  & $SP_3$ &  $SP_5$ & $SP_7$ &   $SP_9$ & $SP_{11}$ & $SP_{13}$ & $SP_{15}$ & $SP_{17}$  \\
        \hline
      $f_1$ & {\bf{1.93\%}}& 2.28\%& 2.25\%& 2.24\%& 2.23\%& 2.23\%& 2.23\%& 2.23\% & 2.23\%\\
      \hline
      $f_2$  &5.24\%& 3.98\%& {{3.76\%}}&{\bf{3.75\%}}& 3.77\%& {{3.78\%}}& 3.79\%& 3.79\% & 3.79\%\\
        \hline
    \end{tabular}
\end{table}


From the table Tab~\ref{TAB: VALID}, we can see that the modeling error with respect to the $SP_N$ equations indeed stay at a relative low level with the selected coefficient set. The reason that such modeling error is not converging to zero might partly attribute to the simplification $\sigma_n \simeq (1-g^n)\sigma_{s}$ in the computation instead of using $\sigma_a + (1-g^n)\sigma_{s}$.

It is also informative to look at the convergence rate of $\phi_0^N$ from the $SP_N$ equations with respect to growing $N$, see~Fig~\ref{FIG: DECAY}. If the error converges sufficiently fast (e.g. exponentially), then we will obtain the uniqueness of reconstruction for the case that $N = \infty$. However, the theory about the convergence is still an open problem.

\begin{figure}[!htb]
    \centering
    \includegraphics[scale=0.7]{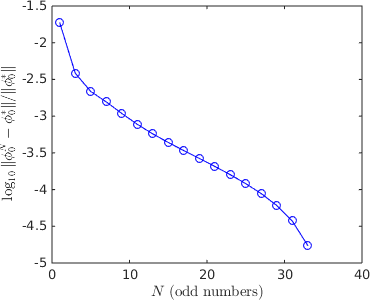}
     \includegraphics[scale=0.7]{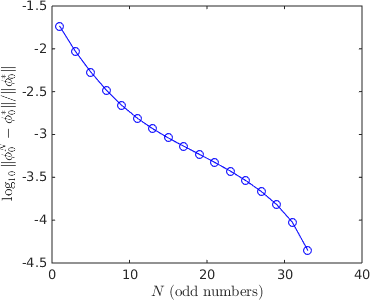}
    \caption{The convergence of $\phi_0^N$ with respect to the order $N$. Left: with source function $f_1$. Right: with source function $f_2$.}
    \label{FIG: DECAY}
\end{figure}
\subsection{Reconstruction of $\sigma_a$ only}
\label{SEC: REC A}
In this numerical experiment, we are using the algorithm introduced in Section~\ref{SEC:Recon} to reconstruct the absorption coefficient $\sigma_a$ only. We consider two scenarios for the reconstruction: (i) The datum $H$ is generated from certain $SP_N$ model. (ii) The datum $H$ is generated from the radiative transport model. The result is summarized in the following Tab~\ref{TAB: ABS REC} and Tab~\ref{TAB: ABS REC2}. For all the reconstructions in the tables below, we have contaminated the datum $H$ with multiplicative random noises pointwisely by $H^{\ast} = H (1 + \gamma\texttt{random})$ with parameter $\gamma = 5\%$ regarded as the noise level and $\texttt{random}$ is the uniform distributed random variable on $[-1,1]$.

\begin{table}[!htb]
    \centering
    \caption{Relative $L^2$ error of the reconstructed $\sigma_a$ for source function $f_1$ with different generating models and reconstruction models. The row label represents the generating model and the column label represents the reconstruction model.}
    \label{TAB: ABS REC}
    \begin{tabular}{||c|c|c|c|c|c|c|c|c|c||}
        \hline
        &$SP_1$  & $SP_3$ &  $SP_5$ & $SP_7$ &   $SP_9$ & $SP_{11}$ & $SP_{13}$ & $SP_{15}$ & $SP_{17}$ \\
        \hline
        $SP_1$&{\bf{2.89\%}}& 3.13\%&3.15\%& 3.14\%& 3.14\%& 3.14\%& 3.14\%& 3.14\% & 3.14\%\\
        \hline
        $SP_3$&3.08\%& {\bf{2.88\%}}& {{2.88\%}}& 2.89\%& 2.89\%& 2.89\%& 2.89\%& 2.89\% & 2.89\%\\
        \hline
        $SP_5$&3.13\%& {{2.91\%}}& {\bf{2.91\%}}& 2.91\%& 2.91\%& 2.91\%& 2.91\%& 2.91\% & 2.91\%\\
        \hline
        $SP_7$&3.09\%& {{2.89\%}}& {{2.88\%}}& 2.88\%& {\bf{2.88\%}}& 2.88\%& 2.88\%& 2.88\% & 2.88\%\\
        \hline
        $SP_9$&3.06\%& {{2.86\%}}&  {{2.86\%}}& 2.86\%& {\bf{2.86\%}}& 2.86\%& 2.86\%& 2.86\% & 2.86\%\\
        \hline
        $SP_{11}$&3.07\%& 2.88\%&  {{2.88\%}}& 2.88\%& 2.88\%&2.88\%& {\bf{2.88\%}}& 2.88\% & 2.88\%\\
        \hline
        $SP_{13}$&3.11\%& 2.91\%& {{2.91\%}}& {{2.91\%}}& 2.91\%& 2.91\%&{\bf{2.91\%}}& 2.91\% & 2.91\%\\
        \hline
        $SP_{15}$&3.09\%& 2.88\%&  {{2.88\%}}& 2.88\%& 2.88\%& 2.88\%& 2.88\%& {\bf{2.88\%}} & 2.88\%\\
        \hline
        $SP_{17}$&3.10\%& 2.89\%& {{2.89\%}}& 2.89\%& {{2.89\%}}& 2.89\%& 2.89\%& {\bf{2.89\%}}& {{2.89\%}}\\
        \hline\hline
        $RTE$&{{3.58\%}}& {\bf{3.14\%}}& {{3.16\%}}& 3.18\%& 3.19\%& 3.19\%& 3.19\%& 3.19\% & 3.20\%\\
        \hline
    \end{tabular}
\end{table}

\begin{table}[!htb]
    \centering
    \caption{Same as Tab~\ref{TAB: ABS REC}, but for source function $f_2$.}
    \label{TAB: ABS REC2}
    \begin{tabular}{||c|c|c|c|c|c|c|c|c|c||}
        \hline
        &$SP_1$  & $SP_3$ &  $SP_5$ & $SP_7$ &   $SP_9$ & $SP_{11}$ & $SP_{13}$ & $SP_{15}$ & $SP_{17}$ \\
        \hline
        $SP_1$&{\bf{2.91\%}}& 3.17\%& 3.22\%& 3.22\%& 3.22\%& 3.22\%& 3.22\%& 3.22\% & 3.22\%\\
        \hline
        $SP_3$&3.10\%& {\bf{2.88\%}}& 2.89\%& 2.90\%& 2.90\%&2.90\%& 2.90\%& 2.90\% &2.90\%\\
        \hline
        $SP_5$&3.14\%& 2.89\%& {\bf{2.89\%}}& 2.89\%& 2.89\%& 2.89\%& 2.89\%& 2.89\% & 2.89\%\\
        \hline
        $SP_7$&3.15\%& 2.89\%& {\bf{2.88\%}}& 2.88\%& 2.88\%& 2.88\%& 2.89\%& 2.89\% & 2.89\%\\
        \hline
        $SP_9$&3.16\%& 2.91\%& 2.89\% & {\bf{2.89\%}}& 2.89\%& 2.89\%& 2.89\%& 2.89\% & 2.89\%\\
        \hline
        $SP_{11}$&3.14\%& 2.89\%&  2.87\%& {\bf{2.87\%}}& 2.87\%& 2.87\%& 2.87\%& 2.87\% & 2.87\%\\
        \hline
        $SP_{13}$&3.12\%& 2.88\%& 2.87\%& {\bf{2.87\%}}& {{2.87\%}}& 2.87\%& 2.87\%& 2.87\% & 2.87\%\\
        \hline
        $SP_{15}$&3.17\%& 2.91\%&  2.90\%& {\bf{2.89\%}}& {{2.89\%}}& 2.89\%& 2.89\%& 2.89\% & 2.89\%\\
        \hline
        $SP_{17}$&3.16\%&2.90\%& 2.89\%& {\bf{2.89\%}}& {{2.89\%}}& 2.89\%& 2.89\%& 2.89\% & 2.89\%\\
        \hline\hline
        $RTE$&8.02\%& 7.12\%& {{6.77\%}}&{{6.65\%}}& 6.60\%& {\bf{6.59\%}}& 6.60\%& 6.60\% & 6.61\%\\
        \hline
    \end{tabular}
\end{table}
Observe the diagonals of above tables, one can find out that the reconstruction error is not growing as $N$ grows, this is because the estimate in Theorem~\ref{THM:1} is only meant for the worst boundary source. For the given source function and coefficient set, the reconstruction based on $SP_1$ model (diffusion approximation) appears not as good as the other models when the data are generated from $SP_N$ models. While the performances of most low order $SP_N$ models ($N\le 7$) are already close to the ones of high order $SP_N$ models ($N\ge 9$).

Particularly, when the datum $H$ is generated from the radiative transport model, all of the reconstruction errors of $SP_N$ models become larger due to the additional modeling errors, see Section~\ref{SEC: VALID APPOX}. We plot some of the reconstructions in Fig~\ref{FIG: ABS REC}. It is not surprising to find that the errors are relatively larger near boundary since the $SP_N$ equation system is still elliptic over the whole domain, while the behavior of radiative transport averaged solution is hyperbolic in the vicinity of boundary sets.

\begin{figure}[!htb]
    \centering
    \includegraphics[scale=0.393]{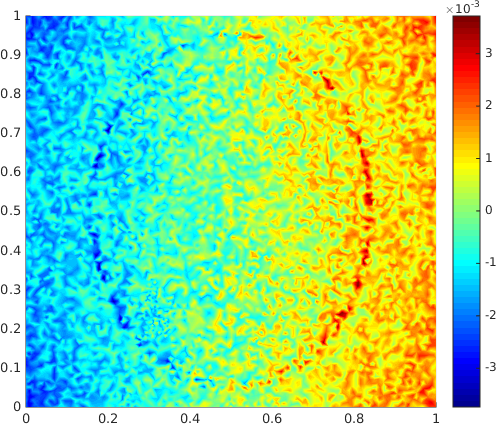}\,
    \includegraphics[scale=0.393]{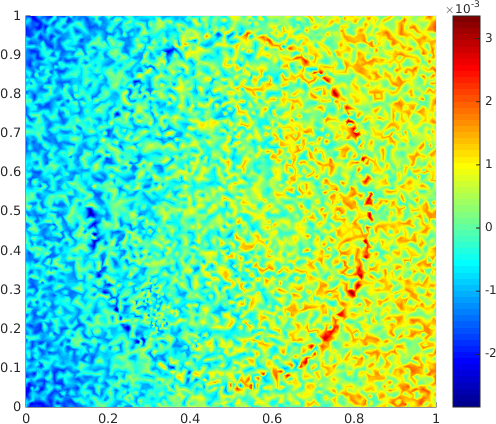}\,
    \includegraphics[scale=0.393]{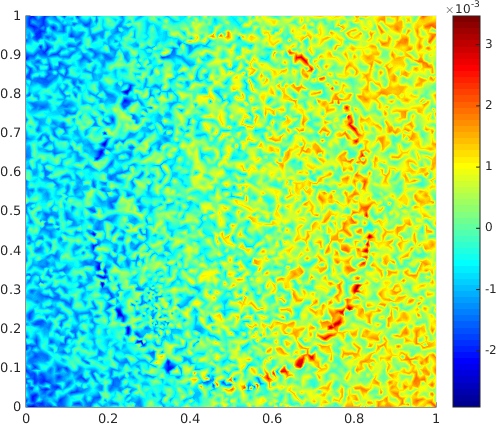}\\
    \includegraphics[scale=0.393]{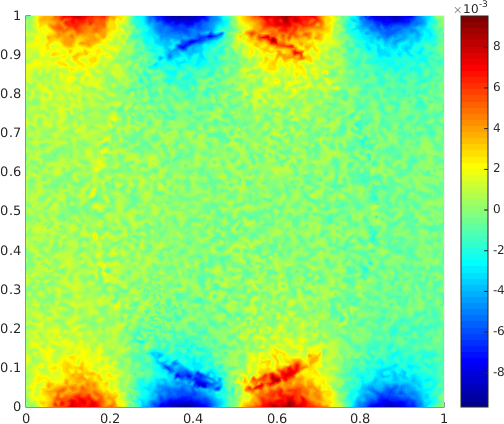}\,
    \includegraphics[scale=0.393]{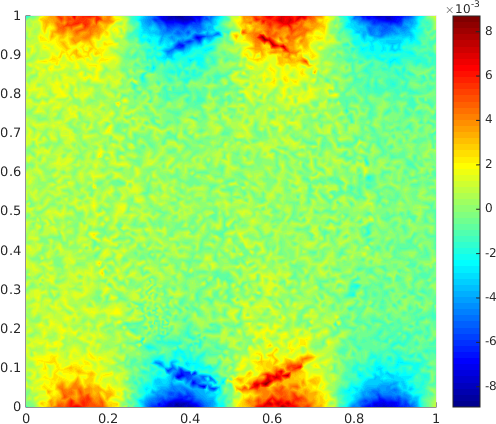}\,
    \includegraphics[scale=0.393]{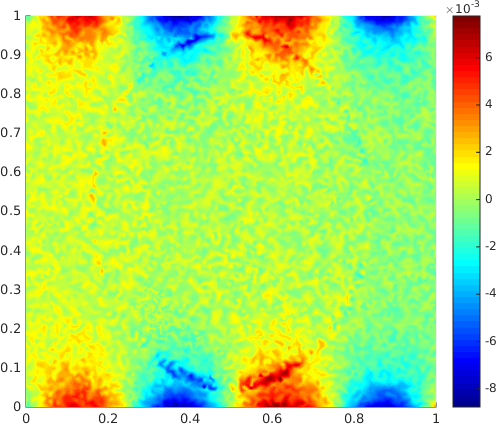}
    \caption{The error of reconstruction for the absorption coefficient $\sigma_a$ with datum generated from radiative transport model. Top row, from left to right: reconstruction error for source $f_1$ with $N=1$, $N=7$, $N=17$. Bottom row is the same but for source $f_2$.}
    \label{FIG: ABS REC}
\end{figure}
\subsection{Reconstruction of $\sigma_s$ only}
\label{SEC: REC S}
The reconstruction of $\sigma_s$ is a nonlinear problem for $N\ge 3$. In order to provide a fair comparison of across the $SP_N$ models, we will use the optimization based method to reconstruct the scattering coefficient:
\begin{equation}\label{EQ: H1 MIN}
\sigma_s = \argmin_{\sigma_s} \frac{1}{2}\int_{\Omega} \left( |H(\bx) - H^{\ast}|^2 + |\nabla (H(\bx) - H^{\ast})|^2 \right) d\bx  +\frac{\beta}{2} \int_{\Omega} |\nabla \sigma_s|^2 d\bx\,,
\end{equation}
where $H^{\ast}$ is the measured datum and $\beta$ is the regularization parameter. The optimization problem is solved by L-BFGS method, the gradient is computed from the adjoint state technique. The choice of regularization parameter $\beta$ should depend on the noise level. One should notice that we use $H^1(\Omega)$ norm instead of the traditional $L^2(\Omega)$ minimization:
\begin{equation}\label{EQ: L2 MIN}
\sigma_s = \argmin_{\sigma_s} \frac{1}{2}\int_{\Omega}  |H(\bx) - H^{\ast}|^2 d\bx  +\frac{\beta}{2} \int_{\Omega} |\nabla \sigma_s|^2 d\bx\,.
\end{equation}
This is due to the stability estimate in Theorem~\ref{THM: SCATTER}, where actually the $H^2(\Omega)$ norm is needed for the stability estimate. However, such regularity requirement implies that $H(\bx)$ needs to be globally $C^1$ from Sobolev embedding, which means our finite element space needs to equip with polynomials of five or higher degrees (e.g.  Argyris element). Here we relax objective functional to $H^1(\Omega)$ norm simply to avoid the extraordinary computational cost. To get a brief impression about the two optimization schemes, we take the $f_1$ source function with $SP_3$ model (both data generation and reconstruction) for an example, the datum $H$ is not contaminated (noise level $\gamma = 0$)
and regularization parameter $\beta = 0$ as well. The reconstructed scattering coefficients are shown in Fig~\ref{FIG: COMPARE SCHEME}. One can tell from the images that the coefficient recovered from $L^2$ optimization~\eqref{EQ: L2 MIN} still contains background artifacts. The reason behind is the relatively strong smoothing effect of the mapping $\sigma_s(\bx)\mapsto H(\bx)$,  where the high frequency information in $\sigma_s$ could not be fully recovered if we emphasize equally on $H(\bx)$'s frequency information. 

\begin{figure}[!htb]
    \centering
    \includegraphics[scale=0.45]{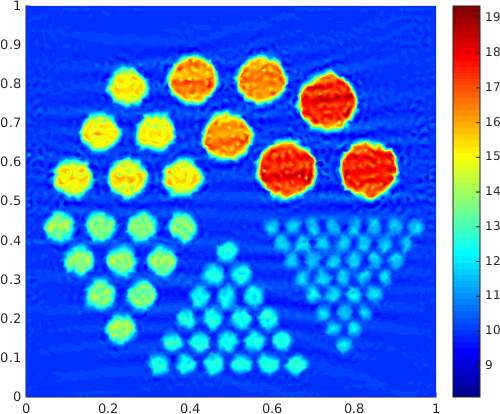}\quad
    \includegraphics[scale=0.45]{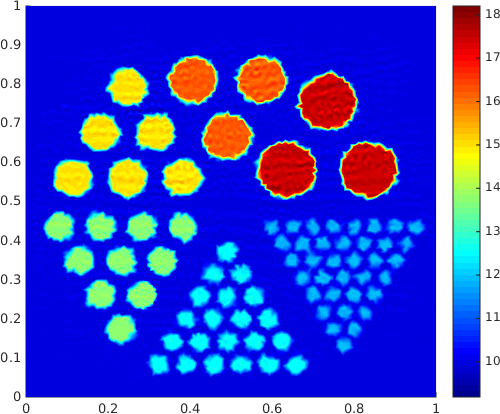}
    \caption{Left: The reconstructed scattering coefficeint using $L^2$ minimization, the $L^2$ relative error is $4.25\%$. Right: The reconstructed scattering coefficient using $H^1$ minimization, the $L^2$ relative error is $1.10\%$.}
    \label{FIG: COMPARE SCHEME}
\end{figure}

Therefore the data contamination should be treated carefully for the $H^1$ minimization~\eqref{EQ: H1 MIN}. This is because if we still apply the random noises by $H^{\ast} = H(1 + \gamma \texttt{random})$ at each mesh node, then $\|H - H^{\ast}\|_{H^1(\Omega)} = \cO(\sqrt{N}\gamma)$, where $N$ is the total number of nodes assuming the mesh is uniform. Therefore instead of pointwise multiplicative noise, we aggressively contaminate the datum by perturbing its Fourier modes
\begin{equation}
\cF{H^{\ast}}(\bxi) = \cF{H}(\bxi)(1 + \gamma \texttt{random})\,,
\end{equation}
where we have used $\gamma = 2\%$ as the noise level parameter. Note that such noise will perturb the low frequency modes of $H$ which might cause severe global artifacts in the reconstruction.

\begin{table}[!htb]
    \centering
    \caption{Relative $L^2$ error of the reconstructed $\sigma_s$ for source function $f_1$ with different generating models and reconstruction models. The row label represents the generating model and the column label represents the reconstruction model.}
    \label{TAB: ABS SCA}
    \begin{tabular}{||c|c|c|c|c|c|c|c|c|c||}
        \hline
        &$SP_1$  & $SP_3$ &  $SP_5$ & $SP_7$ &   $SP_9$ & $SP_{11}$ & $SP_{13}$ & $SP_{15}$ & $SP_{17}$ \\
        \hline
        $SP_1$&{\bf{12.0\%}}& 19.0\%& 19.5\%& 19.6\%& 19.6\%& 19.6\%& 19.6\%& 19.7\% & 19.7\%\\
        \hline
        $SP_3$&13.9\%& {\bf{12.1\%}}& 12.5\%& 12.8\%& 13.0\%& 13.2\%& 13.3\%& 13.4\% & 13.5\%\\
        \hline
        $SP_5$&14.9\%&12.5\%& {\bf{12.3\%}}& 12.5\%& 12.7\%& 12.9\%& 13.0\%& 13.1\% & 13.2\%\\
        \hline
        $SP_7$&15.1\%& 13.3\%& 12.5\%& {{12.1\%}}& 11.9\%& 11.8\%& 11.7\%& 11.7\% & {\bf{11.6\%}}\\
        \hline
        $SP_9$&15.1\%& 13.3\%&  12.5\%& 12.3\%& {\bf{12.2\%}}& {{12.3\%}}& 12.3\%& 12.4\% & 12.4\%\\
        \hline
        $SP_{11}$&15.5\%& 14.4\%&  13.3\%& 12.7\%& 12.3\%& 12.0\%& {{11.8\%}}& 11.7\% &  {\bf{11.6\%}}\\
        \hline
        $SP_{13}$&15.1\%& 13.8\%& 12.8\%& 12.5\%& 12.3\%& 12.2\%& 12.2\%& 12.2\% &  {\bf{12.2\%}}\\
        \hline
        $SP_{15}$&15.7\%&15.1\%&  14.0\%& 13.2\%& 12.7\%& 12.3\%& 12.1\%& 12.0\% & {\bf{11.8\%}}\\
        \hline
        $SP_{17}$&15.2\%& 14.2\%& 13.1\%& 12.7\%& 12.4\%& 12.3\%& 12.2\%& 12.2\% & {\bf{12.2\%}}\\
        \hline\hline
$RTE$&25.8\%& {{25.7\%}}& {\bf{18.3\%}}& 21.5\%&36.9\%& 37.2\%& 39.2\%& 39.3\% &{{39.4\%}}\\
\hline
    \end{tabular}
\end{table}

\begin{table}[!htb]
    \centering
    \caption{Same as Tab~\ref{TAB: ABS SCA} but for source function $f_2$.}
    \label{TAB: ABS SCA2}
    \begin{tabular}{||c|c|c|c|c|c|c|c|c|c||}
        \hline
        &$SP_1$  & $SP_3$ &  $SP_5$ & $SP_7$ &   $SP_9$ & $SP_{11}$ & $SP_{13}$ & $SP_{15}$ & $SP_{17}$ \\
        \hline
        $SP_1$&{\bf{20.8\%}}& 21.4\%& 21.3\%& 23.1\%& 21.4\%& 21.4\%& 21.5\%& 21.6\% & 21.7\%\\
        \hline
        $SP_3$&23.1\%& {\bf{18.0\%}}& 18.6\%& 19.1\%& 19.7\%& 19.9\%& 20.2\%& 20.3\% & 20.5\%\\
        \hline
        $SP_5$&26.9\%& 18.3\%& {\bf{17.7\%}}& 18.0\%& 18.3\%& 18.5\%& 18.6\%& 18.6\% & 18.7\%\\
        \hline
        $SP_7$&28.0\%& 22.7\%& 18.2\%& {{16.4\%}}& {\bf{16.3\%}}& 16.4\%& 16.6\%& 16.8\% & 16.9\%\\
        \hline
        $SP_9$&29.0\%& 19.9\%&  18.0\%&{{17.8\%}}& {\bf{17.7\%}}& 17.8\%& 17.9\%& 18.06\% & 18.1\%\\
        \hline
        $SP_{11}$&29.4\%& 19.5\%&  16.9\%& 16.4\%& {\bf{16.3\%}}& 16.3\%& 16.4\%& 16.4\% & 16.5\%\\
        \hline
        $SP_{13}$&29.5\%& 21.0\%& 18.4\%& 18.0\%& {\bf{17.8\%}}& 17.8\%& 17.8\%& 17.8\% &  {{17.9\%}}\\
        \hline
        $SP_{15}$&30.2\%& 20.6\%& 17.3\%& 16.6\%& 16.4\%& {\bf{16.3\%}}& 16.3\%& 16.3\% & {{16.4\%}}\\
        \hline
        $SP_{17}$&30.5\%&21.5\%&  19.4\%& 18.1\%& 17.9\%& 17.8\%& {\bf{17.7\%}}& 17.8\% & {{17.8\%}}\\
        \hline\hline
$RTE$&55.9\%& {\bf{47.8\%}}&  49.4\%&50.2\%& 50.3\%& 50.3\%& 51.4\% &{{51.5\%}}& {{51.6\%}}\\
\hline
    \end{tabular}
\end{table}

Similar to the previous numerical experiment, we summarize the result in the Tab~\ref{TAB: ABS SCA}, where we have fixed the regularization parameter $\beta = 10^{-8}$ for these experiments. From the table, we could clearly see that $SP_1$ is not as good as other models for the reconstruction of $\sigma_s$ when the datum $H$ is coming from higher order models. 

The reconstruction errors on the diagonal of the tables look converging as the order $N$ grows, which indicates that the models converge relatively fast for the given source functions and the coefficient setting.  When the datum $H$ is generated from the radiative transport equation, the reconstruction error becomes larger. The reconstructions with respect to source function $f_2$ are significantly worse than the ones for $f_1$, see Fig~\ref{FIG: SCA REC}. However, this could be explained through the analogue with the $SP_1$ model, where the scattering coefficient's reconstruction is to solve a transport equation~\cite{bal2011multi}:
\begin{equation}
\nabla \phi_0  \cdot \nabla D(x) + D(x) \Delta \phi_0 - \sigma_a \phi_0 = 0, 
\end{equation}
where $D(x) = 1/(3(1-g)\sigma_s)$, where $\phi_0$ and $\sigma_s|_{\partial\Omega}$ are known. Therefore when $\nabla \phi_0\neq 0$, the function $D(x)$ could be solved by tracing the characteristics. If $\nabla\phi_0$ vanishes or appears to be small, then the characteristics could be trapped, where the reconstructions are based on regularization only. 
\begin{figure}[!htb]
    \centering
    \includegraphics[scale=0.393]{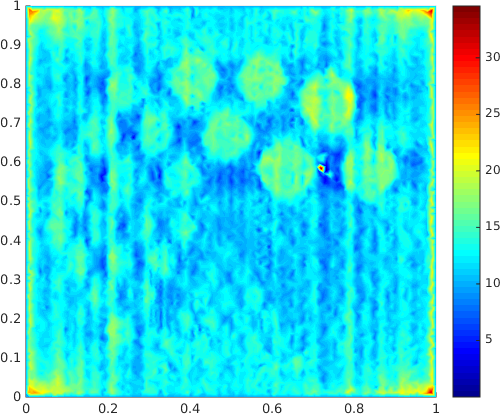}\,
    \includegraphics[scale=0.393]{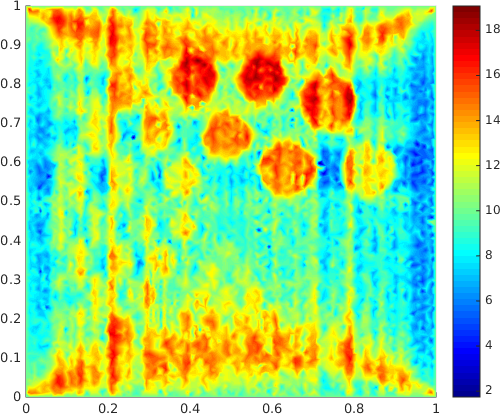}\,
    \includegraphics[scale=0.393]{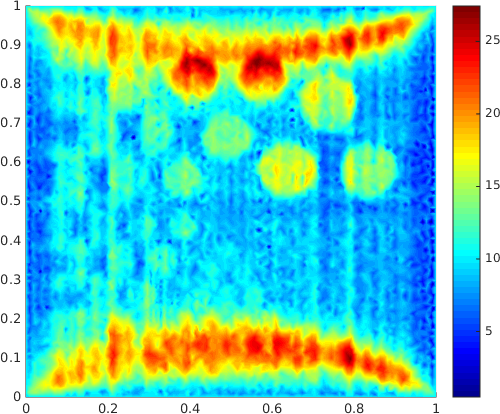}\\
    \includegraphics[scale=0.393]{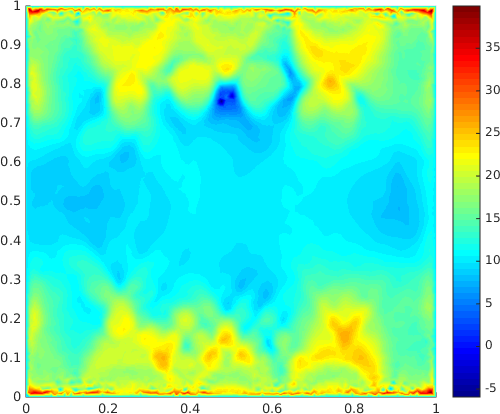}\,
    \includegraphics[scale=0.393]{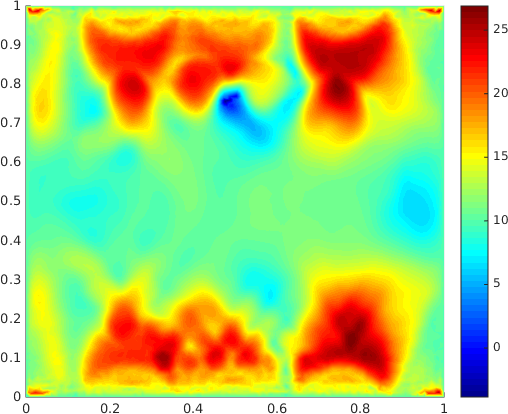}\,
    \includegraphics[scale=0.393]{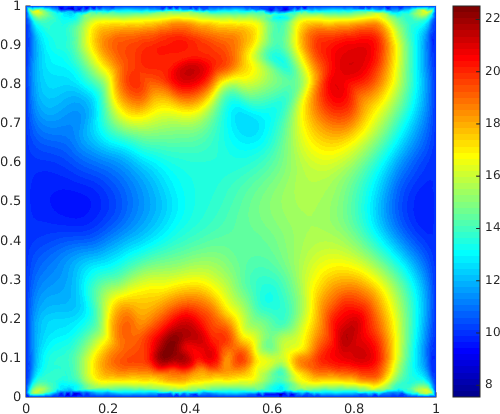}
    \caption{The reconstructions for the scattering coefficient $\sigma_s$ with datum generated from radiative transport model. Top row, from left to right: reconstruction error for source $f_1$ with $N=1$, $N=7$, $N=17$. Bottom row is the same but for source $f_2$.}
    \label{FIG: SCA REC}
\end{figure}

\subsection{Reconstruction of $\sigma_a$ and $\Upsilon$}
\label{SEC: REC A U}
In this section, we consider the non-linearized case and follow the aforementioned two-step reconstruction strategy. Suppose $H_1, H_2$ are the data sets measured with boundary source functions $f_1$ and $f_2$, respectively. Our numerical reconstruction solves the optimization problem:
\begin{equation}
\sigma_a^{\ast} = \argmin_{\sigma_a} \Big\|\frac{H_1}{H_2}- \frac{\bs_1\cdot \Phi_1}{\bs_1 \cdot \Phi_2}\Big\|_{L^2(\Omega)}^2 + \alpha \|\nabla \sigma_a\|_{L^2(\Omega)}^2\,,
\end{equation}
where $\Phi_1, \Phi_2$ are the solutions to the $SP_N$ equation with the absorption coefficient $\sigma_a$ and source function $f_1, f_2$, respectively. Intuitively, the ratio ${H_1}/{H_2}$ should be quite smooth and weakly depends on $\sigma_a$, which means the reconstruction for $\sigma_a$ could be very unstable. In the following, we assume the data sets $H_1$ and $H_2$ are generated from the $SP_N$ models with multiplicative noise $H_i^{\ast} = H_i (1 + \gamma \texttt{random})$ for $\gamma = 0.1\%$ only, the regularization parameter is fixed as $\alpha = 10^{-8}$. Then we reconstruct the absorption coefficient using the same model. The reconstructions are shown in Fig~\ref{FIG: SIMUL SIGMA}, it could be seen that the reconstructions are very unstable even for small noise, only limited resolution could be obtained. 

\begin{figure}[!htb]
    \centering
    \includegraphics[scale=0.38]{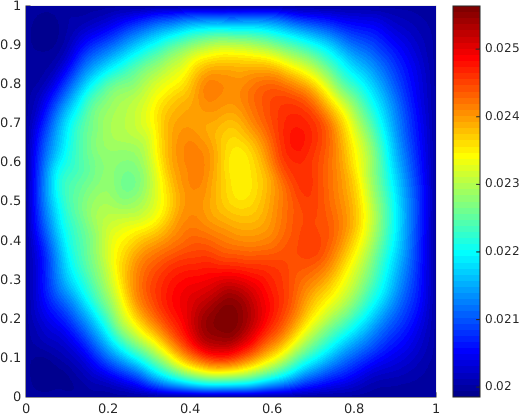}\,
    \includegraphics[scale=0.38]{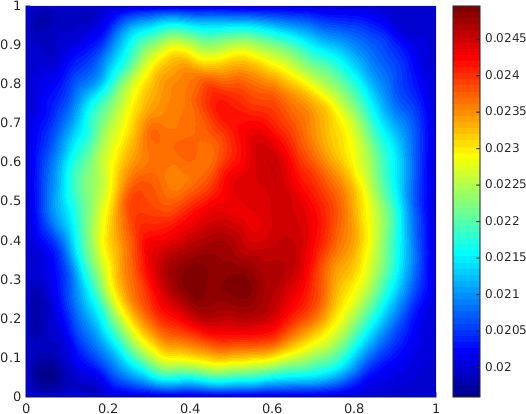}\,
    \includegraphics[scale=0.38]{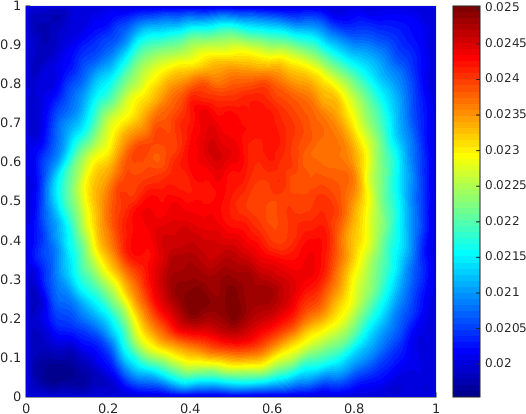}
    \caption{The reconstructions of absorption coefficients $\sigma_a$ with respect to $SP_N$ models. From left to right: $N = 1, 7, 17$. The relative $L^2$ errors are $16.6\%, 16.2\%, 15.9\%$, respectively.}
    \label{FIG: SIMUL SIGMA}
\end{figure}

\subsection{Reconstruction of $\sigma_a$ and $\sigma_s$}
\label{SEC: REC A S}
Similar to the previous case, we consider the reconstruction in a two-step process as well. Suppose  $H_1, H_2$ are the data sets with boundary sources $f_1, f_2$, respectively. Our algorithm will first construct the scattering coefficient from the following optimization problem:
\begin{equation}
\sigma_s^{\ast} = \argmin_{\sigma_s} \Big\|\frac{H_1}{H_2}- \frac{\bs_1\cdot \Phi_1}{\bs_1 \cdot \Phi_2}\Big\|_{H^1(\Omega)}^2 + \alpha \|\nabla \sigma_s\|_{L^2(\Omega)}^2\,,
\end{equation}
where $\Phi_1,\Phi_2$ are the solutions to the modified $SP_N$ equation~\eqref{EQ:BILINEAR2}, where $\sigma_a$ has been replaced by $H_i/(\Upsilon \bs_1\cdot \Phi_i)$, $i=1,2$. Here we have taken the $H_1$ minimization.  In the following numerical experiment, we assume $H_1, H_2$ are generated from the $SP_N$ model, the data sets are contaminated on the Fourier space through

$$\cF H^{\ast}_i(\bxi) = \cF \cH_i(\bxi)(1 + \gamma \texttt{random})$$
with $\gamma = 2\%$.
We also fix the regularization parameter $\alpha = 10^{-8}$.The numerical reconstructions are performed over the same $SP_N$ model and the results are illustrated in Fig~\ref{FIG: SIMUL SIGMS}. After the scattering coefficient has been reconstructed, we will use the recovered scattering coefficient to find the absorption coefficient following the Experiment 4.4. The corresponding reconstruction errors of the absorption coefficients are shown in Fig~\ref{FIG: SIMUL SIGMS A}. It can be seen that even the reconstruction of scattering coefficients contain background artifacts, while the reconstruction errors of $\sigma_a$ are still quite small.

\begin{figure}[!htb]
    \centering
    \includegraphics[scale=0.39]{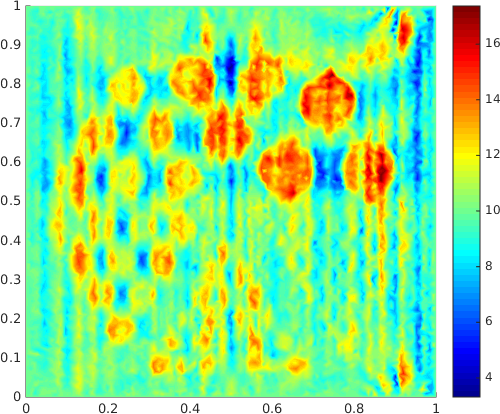}\,
    \includegraphics[scale=0.39]{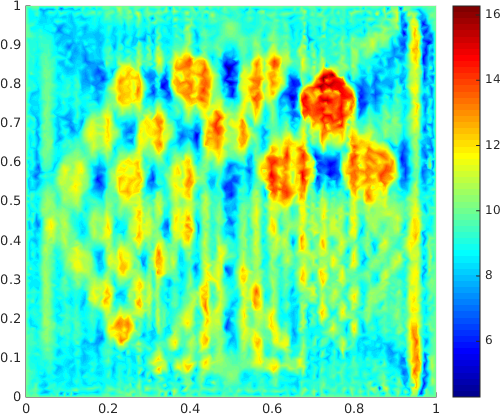}\,
    \includegraphics[scale=0.39]{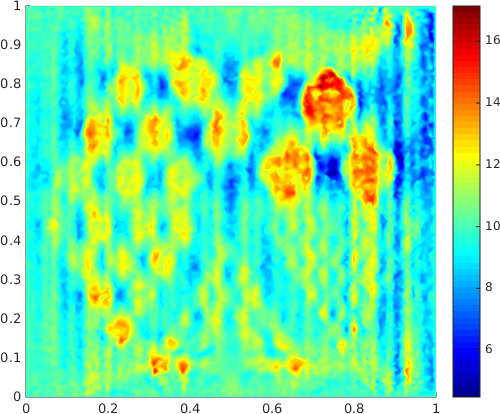}
    \caption{The reconstructions of scattering coefficients $\sigma_s$ with respect to $SP_N$ models. From left to right: $N = 1, 7, 17$. The relative $L^2$ errors are $16.1\%, 16.7\%, 16.8\%$, respectively.}
    \label{FIG: SIMUL SIGMS}
\end{figure}

\begin{figure}[!htb]
    \centering
    \includegraphics[scale=0.39]{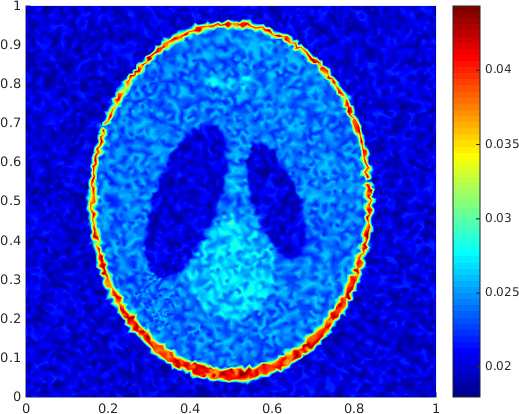}
    \includegraphics[scale=0.39]{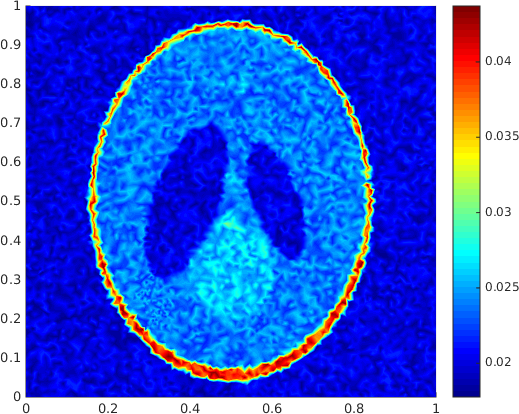}
    \includegraphics[scale=0.39]{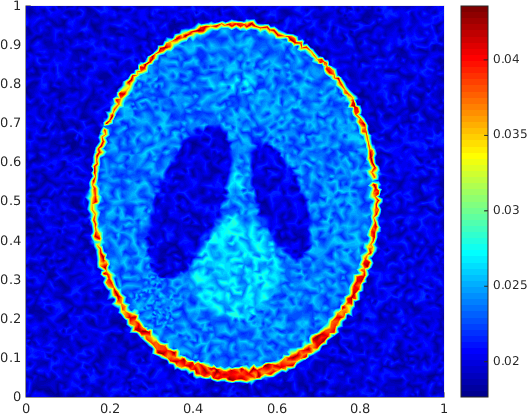}
    \caption{The reconstruction error of absorption coefficients $\sigma_a$ with respect to $SP_N$ models by using the recovered scattering coefficients from Fig~\ref{FIG: SIMUL SIGMS}. From left to right: $N = 1, 7, 17$. The relative $L^2$ errors are $1.76\%, 5.79\%,5.80\%$, respectively.}
    \label{FIG: SIMUL SIGMS A}
\end{figure}

\section{Conclusion}
\label{SEC:Concl}
In this work, we studied the quantitative photoacoustic tomography with the simplified $P_N$ approximation model to the radiative transport equation. We have derived the uniqueness and stability estimates for the reconstruction of one single coefficient of $(\sigma_a$, $\sigma_s$, $\Upsilon$) from one initial pressure datum $H(\bx)$. For the simultaneous reconstruction of two coefficients, we have considered the linearized setting and introduced the optimization based numerical algorithm for the reconstruction. We showed the numerical simulations based on a synthetic data to validate the mathematical analysis. 
\section*{Acknowledgment}
H. Zhao's research is partially supported by NSF DMS-2048877 and DMS-2012860.

\appendix
\section{Appendix}
\label{SEC: APP}
\begin{lemma}\label{LEM: M INV}
    Let $s_{k,l}$ be the $(k,l)$-th entry of $M^{-1}$, then 
        \begin{equation}
    s_{k, l} = 
    \begin{cases}
    \frac{1}{2k-1}(-1)^{l-k}\frac{((2l-2))!!}{(2l-1)!!}\frac{(2k-1)!!}{(2k-2)!!},\quad &l\ge k\,,\\
    0, &\text{otherwise}\,.
    \end{cases} 
    \end{equation}
\end{lemma}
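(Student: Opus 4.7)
The key observation is that $M$ is upper triangular and bidiagonal: $M_{k,k}=2k-1$ and $M_{k,k+1}=2k$, with all other entries zero. Consequently $M^{-1}$ is upper triangular, giving $s_{k,l}=0$ for $l<k$ immediately. For the nonzero part, I would determine column $l$ of $M^{-1}$ by solving $M\bx=\be_l$ via back-substitution, exploiting the fact that each row involves at most two unknowns.

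First I would establish the boundary case $k=l$. The $l$-th row of $M\bx=\be_l$ reads $(2l-1)s_{l,l}+2l\cdot s_{l+1,l}=1$, and since $s_{l+1,l}=0$ by triangularity, this yields $s_{l,l}=\tfrac{1}{2l-1}$. A quick check shows this agrees with the proposed formula at $k=l$: all the double-factorial ratios reduce to $1$. Next, for $k<l$, the $k$-th row of the system gives the homogeneous relation $(2k-1)s_{k,l}+2k\,s_{k+1,l}=0$, i.e.\ the two-term recurrence
\begin{equation*}
s_{k,l}=-\frac{2k}{2k-1}\,s_{k+1,l}.
\end{equation*}

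Iterating this recurrence downward from $k=l-1$ to the desired $k$, using $s_{l,l}=\tfrac{1}{2l-1}$ as the starting value, produces the telescoping product
\begin{equation*}
s_{k,l}=\frac{(-1)^{l-k}}{2l-1}\prod_{j=k}^{l-1}\frac{2j}{2j-1}
=\frac{(-1)^{l-k}}{2l-1}\cdot\frac{(2k)(2k+2)\cdots(2l-2)}{(2k-1)(2k+1)\cdots(2l-3)}.
\end{equation*}
I would then rewrite the numerator as $(2l-2)!!/(2k-2)!!$ and rewrite the denominator, after absorbing the factor $1/(2l-1)$, as $(2l-1)!!/(2k-1)!!$, which yields
\begin{equation*}
s_{k,l}=(-1)^{l-k}\,\frac{(2l-2)!!}{(2l-1)!!}\,\frac{(2k-3)!!}{(2k-2)!!}
=\frac{1}{2k-1}(-1)^{l-k}\,\frac{(2l-2)!!}{(2l-1)!!}\,\frac{(2k-1)!!}{(2k-2)!!},
\end{equation*}
where the last equality uses $(2k-1)!!=(2k-1)(2k-3)!!$. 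This is exactly the stated formula.

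There is no genuine obstacle: the argument is elementary back-substitution plus a double-factorial bookkeeping step. The only place requiring care is the edge case $k=1$, where $(2k-3)!!$ must be interpreted via the convention $(-1)!!=1$ (equivalently, observe directly from the telescoping product that $s_{1,l}=(-1)^{l-1}(2l-2)!!/(2l-1)!!$, and check this matches the formula since $(2\cdot 1-1)!!/(2\cdot 1-2)!!=1$).
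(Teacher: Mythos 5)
Your proposal is correct. It differs from the paper's proof only in direction: the paper takes the closed-form expression as given, defines the matrix $S$ with those entries, and verifies $SM=I$ entrywise (the case $j>k$ reduces to a two-term cancellation between $s_{k,j}(2j-1)$ and $s_{k,j-1}(2j-2)$ written in double-factorial form), whereas you derive the formula constructively by solving $M\bx=\be_l$ column by column, so your two-term recurrence runs down the rows of a fixed column ($s_{k,l}=-\tfrac{2k}{2k-1}s_{k+1,l}$) rather than across the columns of a fixed row, and the closed form emerges from the telescoping product. Both arguments hinge on exactly the same bidiagonal structure of $M$ and an equivalent double-factorial bookkeeping step; yours has the small advantage of explaining where the formula comes from rather than checking it after the fact, and your treatment of the boundary cases ($s_{l,l}=\tfrac{1}{2l-1}$, the empty product at $k=l$, and the convention $(-1)!!=1$ at $k=1$) is complete, so there is no gap.
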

\begin{proof}
Let $S$ be the matrix with $(k,l)$-th entry as $s_{k,l}$, we then compute the $(k,j)$-th entry of $SM$ by
$a_{k,j} := s_{k, j}(2j-1)+ s_{k, j-1} (2j-2)$. If $k=j$, $s_{k,j} = \frac{1}{2j-1}$, therefore $a_{k,j} = \frac{1}{2j-1} (2j-1) = 1$. If $j<k$, notice that now $s_{k,j} = 0$, we must have $a_{k,j}  = 0$. If $j > k$, we compute $a_{k,j}$ directly
\begin{equation}
\frac{1}{2k-1}\frac{(2k-1)!!}{(2k-2)!!} \left( (-1)^{j-k}\frac{((2j-2))!!}{(2j-1)!!}(2j-1) + (-1)^{j-k-1}\frac{((2j-4))!!}{(2j-3)!!} (2j-2)\right) = 0\,.
\end{equation}
Hence $S = M^{-1}$.
\end{proof}
\begin{lemma}\label{LEM:DET}
$\det(R^{-1})\le \cO(N^{3/8})$, hence $\det(R) \ge \cO(N^{-3/8})$.
\end{lemma}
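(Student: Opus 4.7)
The strategy is to evaluate $\det(R)$ essentially in closed form and then estimate. Observe first that $R = A D$, where $D = \diag(4j-3)_{j=1}^m$ with $m = (N+1)/2$, and $A_{ij} = \int_0^1 P_{2i-1}(x)\, P_{2j-2}(x)\, dx$. Hence $\det(R) = \det(A) \prod_{j=1}^m (4j-3)$, and the last product equals $4^m\, \Gamma(m+\tfrac{1}{4})/\Gamma(\tfrac{1}{4})$ in closed form. It therefore suffices to bound $1/|\det(A)|$ from above.

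To evaluate $\det(A)$, I will expand each odd Legendre polynomial $P_{2i-1}$ in the monomial basis $\{x,x^3,\dots,x^{2m-1}\}$ and each even Legendre polynomial $P_{2j-2}$ in $\{1,x^2,\dots,x^{2m-2}\}$. This gives a factorization $A = U H' V^T$, where $U$ and $V$ are triangular matrices of expansion coefficients, and $H'_{kl} = \int_0^1 x^{2k+2l-3}\,dx = \tfrac{1}{2(k+l-1)}$, i.e.\ $H'=\tfrac{1}{2}H_m$ is half the classical $m\times m$ Hilbert matrix. The diagonals of $U$ and $V$ consist of the leading coefficients of the relevant Legendre polynomials, so that
\[
\det(U) = \prod_{i=1}^m \frac{1}{2^{2i-1}}\binom{4i-2}{2i-1},\qquad \det(V) = \prod_{j=1}^m \frac{1}{2^{2j-2}}\binom{4j-4}{2j-2},
\]
and $\det(H_m)$ is given by the Cauchy--Hilbert determinant formula as $G(m+1)^4/G(2m+1)$, with $G$ the Barnes $G$-function. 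Assembling the pieces expresses $\det(R)$ as a single ratio of explicit $\Gamma$- and $G$-values.

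The estimate then follows by applying Stirling's formula (equivalently the Barnes $G$ asymptotic $\log G(n+1) = \tfrac{n^2}{2}\log n - \tfrac{3n^2}{4} + \tfrac{n}{2}\log(2\pi) - \tfrac{1}{12}\log n + O(1)$) to every factor. The super-exponential $e^{\Theta(m^2)}$ growth coming from the product of central binomial coefficients in $\det(U)\det(V)$ cancels precisely against the $e^{-\Theta(m^2)}$ decay of $\det(H_m)$; similarly, the $e^{\Theta(m\log m)}$ contribution from $\prod(4j-3)$ is balanced against the analogous terms arising from the leading Legendre coefficients and from $(2m-1)!!$-type factors. What remains is a polynomial-in-$m$ expression whose dominant behavior is controlled by the $\log m$ coefficients coming from the half-integer Stirling corrections, and tracking these produces the claimed bound $\det(R^{-1}) \le \cO(N^{3/8})$.

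The main obstacle is the careful bookkeeping of these cancellations. Each individual factor in the product for $\det(R)$ is of order $e^{\Theta(m^2)}$, so even a small error in identifying the coefficient of $m^2$ in any Stirling expansion would leave an exponentially large residual and destroy the polynomial bound. Equally delicate are the $\tfrac{1}{2}\log(2\pi)$ and $-\tfrac{1}{12}\log n$ remainders in the Stirling/Barnes expansions, whose combined contribution is what ultimately pins down the fractional exponent $3/8$; a more refined accounting could in principle sharpen the exponent, matching the remark that the subsequent bound on $\lambda_{\min}(R)$ is not tight.
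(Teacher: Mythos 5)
Your proposal takes a genuinely different route from the paper, and its skeleton is correct: with $m=(N+1)/2$, peeling off $D=\diag(4j-3)$ and expanding the Legendre polynomials in monomials does give $R=UH'V^{\mathrm T}D$ with $H'=\tfrac12 H_m$ the halved Hilbert matrix, and your leading-coefficient and Hilbert-determinant formulas are all correct (at $m=2$ this factorization reproduces $\det R=25/64$ exactly). The paper instead exploits the Cauchy structure of $R$ itself: it writes $R=UGV$ with diagonal $U,V$ and $G$ a Cauchy matrix on the nodes $(i-\tfrac14)^2$, $(j-\tfrac34)^2$, uses the explicit inverse $G^{-1}=PG^{\mathrm T}Q$ with Cramer-type diagonal factors to exhibit $\det(R^{-1})$ as a finite product of Gamma ratios, and bounds each factor termwise by Chu's double inequality; that is a finite, elementary inequality valid for every $N$, with no asymptotic expansions. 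What your route buys is an exact closed form and the sharp order; what it costs is precisely the bookkeeping you defer, and as written that is the gap: the decisive Stirling/Barnes computation is asserted rather than performed, so the claimed bound is never actually derived. It does go through. Using $\prod_{k=1}^{2m}\Gamma(k-\tfrac12)=G(2m+\tfrac12)/G(\tfrac12)$ your factorization collapses to
\[
\det R=\frac{2^{2m^2}\,\Gamma\bigl(m+\tfrac14\bigr)\,G(m+1)^4\,G\bigl(2m+\tfrac12\bigr)}{\pi^{m}\,\Gamma\bigl(\tfrac14\bigr)\,G\bigl(\tfrac12\bigr)\,G(2m+1)^2}\,,
\]
and in the expansion every $m^2\log m$, $m^2$, $m\log m$ and $m$ term cancels identically, while the surviving $\log m$ coefficient is $-\tfrac13+\bigl(\tfrac18-\tfrac1{12}\bigr)+\tfrac16-\tfrac14=-\tfrac38$ (the four contributions coming from $G(m+1)^4$, $G(2m+\tfrac12)$, $G(2m+1)^{-2}$ and $\Gamma(m+\tfrac14)$ respectively). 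Hence $\det R\asymp m^{-3/8}$ and $\det(R^{-1})=\Theta(N^{3/8})$, consistent with small-$m$ values ($\tfrac12$, $\tfrac{25}{64}$, $\approx 0.336$). So your instinct that the half-integer corrections produce exactly $3/8$ is right, your route proves the lemma once the computation is displayed, and it even shows the paper's exponent is sharp in order, which the paper's termwise estimate does not reveal; but in a final write-up the cancellation table is the proof and must appear, since any slip in a single $m^2$ coefficient would leave an exponentially large residual, exactly as you warn.
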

\begin{proof}
	Since 
	\begin{equation}
	\begin{aligned}
	R_{ij} &= (-1)^{i+j-1}\frac{2\Gamma(i+\frac{1}{2})\Gamma(j-\frac{1}{2})}{\pi \Gamma(i)\Gamma(j)} \frac{(4j-3)}{ (2i+2j-2)(2j-2i -1) }\,,\\
	&= (-1)^{i+j}\frac{\Gamma(i+\frac{1}{2})\Gamma(j-\frac{1}{2})}{\pi \Gamma(i)\Gamma(j)} \frac{2j-\frac{3}{2}}{(i-\frac{1}{4})^2 - (j - \frac{3}{4})^2}\,,
	\end{aligned}
	\end{equation}
	which gives the factorization
	\begin{equation}
	R = U G V\,,
	\end{equation}
	where diagonal matrices 
    \begin{equation}
    \begin{aligned}
    U &= \diag(u_1,\dots, u_{(N+1)/2})\,,\\ V &= \diag(v_1,\dots, v_{(N+1)/2})\,,
    \end{aligned}
    \end{equation}
    with $  u_i = (-1)^{i}\sqrt{\frac{2}{\pi}}\frac{\Gamma(i+\frac{1}{2})}{\Gamma(i)}, v_j = (-1)^j\sqrt{\frac{2}{\pi}} \frac{\Gamma(j-\frac{1}{2})}{\Gamma(j)}(j-\frac{3}{4})\,,$
    and $G$ is the Cauchy-Toeplitz matrix,
	\begin{equation}
G = \left( \frac{1}{(i-\frac{1}{4})^2 - (j - \frac{3}{4})^2}\right)_{i,j=1,\dots ,(N+1)/2}\,.
	\end{equation}
	We denote $x_i = (i-\frac{1}{4})^2$ and $y_j = (j - \frac{3}{4})^2$, then use the Theorem 2.1 in~\cite{tyrtyshnikov1992singular}, 
	\begin{equation}
	G^{-1} = P G^T Q\,,
	\end{equation}
	where ${P} = \text{diag}(p_1,\dots, p_{(N+1)/2})$ and $Q = \text{diag}(q_1, \dots, q_{(N+1)/2})$ satisfy
	\begin{equation}
	\begin{aligned}
	G [p_1, \dots, p_{(N+1)/2}]^T &= [1, \dots, 1]^T, \\
	G^T [q_1, \dots, q_{(N+1)/2}]^T &= [1, \dots, 1]^T\,.
	\end{aligned}
	\end{equation}
	Also $p_i, q_j$ are computed explicitly by Cramer's law,
	\begin{equation}
	\begin{aligned}
	p_i &= \prod_{l=1}^{(N+1)/2}(x_l - y_i) /\prod_{l=1,l\neq i}^{(N+1)/2} (y_l - y_i)\,,\\
	q_j &= \prod_{l=1}^{(N+1)/2} (x_j - y_l) / \prod_{l=1,l\neq j}^{(N+1)/2} (x_j - x_l)\,.
	\end{aligned}
	\end{equation}
	By replacing $x_i, y_j$ with their values, we obtain
	\begin{equation}
	\begin{aligned}
	p_i &= \prod_{l=1}^{(N+1)/2}(l+i-1)(l-i+\frac{1}{2})  /\prod_{l=1,l\neq i}^{(N+1)/2} (l-i)(l+i-3/2) \\
	&= \left(\prod_{l=1}^{(N+1)/2}\frac{l+i-1}{l+i-\frac{3}{2}} \right) \note{\left(2i-\frac{3}{2}\right)}\left(\frac{1}{2}\prod_{l=1}^{i-1}\left(1 - \frac{1}{2l}\right) \prod_{l=1}^{(N+1)/2 - i} \left(1 + \frac{1}{2l}\right)\right) \\
	&=\frac{\Gamma(i + \frac{N+1}{2})}{\Gamma(i + \frac{N}{2})} \frac{\Gamma(i-\frac{1}{2})}{\Gamma(i)}\left(2i-\frac{3}{2}\right) \left(\frac{1}{2}\prod_{l=1}^{i-1}\left(1 - \frac{1}{2l}\right) \prod_{l=1}^{(N+1)/2 - i} \left(1 + \frac{1}{2l}\right)\right)\,,\\
	q_j &= \prod_{l=1}^{(N+1)/2} (j+l-1)(j-l+\frac{1}{2}) / \prod_{l=1,l\neq j}^{(N+1)/2} (j-l)(j + l-1/2)\\
	&= \left( \prod_{l=1}^{(N+1)/2} \frac{j+l-1}{j+l-\frac{1}{2}}\right)\note{\left(2j -\frac{1}{2}\right)}\left(\frac{1}{2}\prod_{l=1}^{j-1}\left(1+\frac{1}{2l}\right) \prod_{l=1}^{(N+1)/2 - j} \left(1-\frac{1}{2l}\right)\right) \\
	&= \frac{\Gamma(j + \frac{N+1}{2})\Gamma(j+\frac{1}{2})}{\Gamma(j + \frac{N}{2} + 1)\Gamma(j)}\left(2j -\frac{1}{2}\right)\left(\frac{1}{2}\prod_{l=1}^{j-1}\left(1+\frac{1}{2l}\right) \prod_{l=1}^{(N+1)/2 - j} \left(1-\frac{1}{2l}\right)\right)\,.
	\end{aligned}
	\end{equation}
	Then we can easily deduce $\det(R^{-1}) = \sqrt{\det(P)\det(Q)}/(\det(V)\det(U))$, since all matrices involved are diagonal, let $S_i =\frac{\pi}{2} \prod_{l=1}^{i-1}\left(1+\frac{1}{2l}\right) \prod_{l=1}^{(N+1)/2 - i} \left(1-\frac{1}{2l}\right)$, then from the theory of Gamma functions, we know
	\begin{equation}
	\begin{aligned}
	S_i = \frac{\Gamma(i+\frac{1}{2}) \Gamma(\note{\frac{N+1}{2}}-i+\frac{1}{2})}{\Gamma(i)\Gamma(\frac{(N+1)}{2}-i+1)}\,.
	\end{aligned}
	\end{equation}
	Hence we can estimate $\det(R^{-1})$'s upper bound by estimating
	\begin{equation}
	\begin{aligned}
	\det(R^{-1}) &= \prod_{i=1}^{(N+1)/2}\frac{\sqrt{p_i q_i}}{u_i v_i} \\
	&=\prod_{i=1}^{(N+1)/2} S_i \sqrt{\frac{ \Gamma(i+\frac{N+1}{2})^2 \Gamma(i)^2(i-\frac{1}{4})}{\Gamma(i+\frac{N}{2}) \Gamma(i+\frac{N}{2}+1) \Gamma(i+\frac{1}{2})\Gamma(i-\frac{1}{2})(i-\frac{3}{4}) }} \\
	&=\prod_{i=1}^{(N+1)/2} \frac{\Gamma(i-\frac{1}{2})}{\Gamma(i)}\sqrt{\frac{\Gamma(i+\hN)^2(i-\frac{1}{2})(i-\frac{1}{4})}{\Gamma(i+\hN -\frac{1}{2})\Gamma(i+\hN +\frac{1}{2})(i-\frac{3}{4})}} \,.
	\end{aligned}
	\end{equation}
	Then by noticing the Chu's Double Inequality~\cite{chu1962modified}, 
	\begin{equation}
	\sqrt{x - \frac{1}{4}} < \frac{\Gamma(x + \frac{1}{2})}{\Gamma(x)} < \frac{x}{\sqrt{x + \frac{1}{4}}}\,,
	\end{equation}
	the following estimates hold,
	\begin{equation}
	\begin{aligned}
\sqrt{\frac{ \Gamma(i+\frac{N+1}{2})^2 }{\Gamma(i+\frac{N}{2}) \Gamma(i+\frac{N}{2}+1)  }} &\le \sqrt{\frac{i + \frac{(N+1)}{2} - \frac{1}{2}}{i + \hN -\frac{1}{4} }} < 1\,, \\
\sqrt{\frac{\Gamma(i-\frac{1}{2})^2}{\Gamma(i)^2}\frac{(i-\frac{1}{2})(i-\frac{1}{4})}{i-\frac{3}{4}}} &\le \sqrt{\frac{(i-\frac{1}{2})(i-\frac{1}{4})}{(i-\frac{3}{4})^2}}\,,
	\end{aligned}
	\end{equation}
	which implies
	\begin{equation}
	\begin{aligned}
	\det(R^{-1}) &<  \prod_{i=1}^{(N+1)/2} \sqrt{\frac{(4i-2)(4i-1)}{(4i-3)^2}}\\&= \exp\left(\frac{1}{2}\sum_{i=1}^{(N+1)/2}\log\left(1 + \frac{3}{4i-3} + \frac{2}{(4i-3)^2}\right)\right)\\
	&\le \exp\left(\frac{1}{2}\sum_{i=1}^{(N+1)/2} \frac{3}{4i-3} + \frac{2}{(4i-3)^2}\right)\\
	&= \cO\left(\exp\left(\frac{3}{8}\log \left(\hN\right) \right)\right) = \cO(N^{3/8})\,.
	\end{aligned}
	\end{equation}
\end{proof}
\begin{lemma}\label{LEM:FRO}
	$\|R\|_F^2 \le \frac{N+1}{2}$.
	\end{lemma}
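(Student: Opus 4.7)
My plan is to prove the pointwise row-wise estimate $\sum_{j=1}^{(N+1)/2} R_{ij}^2 \leq 1$ for every $i$, so that summing over $i$ immediately yields $\|R\|_F^2\leq (N+1)/2$. I would begin from the closed-form expression
$R_{ij}=(-1)^{i+j-1}\alpha_i\beta_j(4j-3)/[\pi(i+j-1)(2j-2i-1)]$ derived in the proof of Lemma~\ref{LEM:DET}, where $\alpha_i:=\Gamma(i+\tfrac12)/\Gamma(i)$ and $\beta_j:=\Gamma(j-\tfrac12)/\Gamma(j)$. Gautschi's inequality, already invoked elsewhere in the appendix, gives $\alpha_i^2<i+\tfrac12$ and $\beta_j^2<1/(j-1)$ for $j\geq 2$, producing the entrywise bound
\[
R_{ij}^2\leq \frac{16(i+\tfrac12)(j-\tfrac34)^2}{\pi^2(j-1)(i+j-1)^2(2j-2i-1)^2},\qquad j\geq 2,
\]
with a separate direct estimate $R_{i1}^2=\alpha_i^2/[\pi i^2(2i-1)^2]$ for the first column.

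Next I would reindex by $k=j-i$. The denominator factor $(2j-2i-1)^{2}$ equals $(2k-1)^2$ for $k\geq 0$ and $(2|k|+1)^2$ for $k\leq -1$, so summing over $k\in\mathbb{Z}$ yields the sharp telescoping identity $\sum_k (2j-2i-1)^{-2}=2\sum_{m\geq 1,\,m\,\text{odd}}m^{-2}=\pi^2/4$. Meanwhile the prefactor $16(i+\tfrac12)(j-\tfrac34)^2/[(j-1)(i+j-1)^2]$ is bounded by $4$ uniformly in $k$ for large $i$, giving the limiting row sum $(4/\pi^2)\cdot(\pi^2/4)=1$. Since all $R_{ij}^2\geq 0$, truncating to $j\leq(N+1)/2$ only decreases the sum, so the per-row bound transfers to the truncated sum.

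The main obstacle I expect is in tightening this to constant~$1$ for small $i$: the prefactor is genuinely $k$-dependent when $|k|$ is comparable to $i$, so a literal term-by-term comparison against a separated bound of the form $f(i)\cdot(2k\pm1)^{-2}$ loses constants, and one must perform a careful partial-fraction decomposition or summation by parts to control the near-boundary correction, as well as verify the edge case $i=1$ explicitly. An alternative route that sidesteps some of this is to invoke the Parseval identity $\sum_j R_{ij}^2/(4j-3)=1/(4i-1)$, obtained from the $L^2([-1,1])$ expansion $P_{2i-1}(|x|)=\sum_j R_{ij}P_{2j-2}(x)$, together with its companion $\sum_i(4i-1)R_{ij}^2=4j-3$; these two identities say that the infinite matrix $\mathrm{diag}(\sqrt{4i-1})\,A\,\mathrm{diag}(\sqrt{4j-3})$ with $A_{ij}=\int_0^1 P_{2i-1}P_{2j-2}\,dx$ is a unitary operator, so its $(N+1)/2\times(N+1)/2$ truncation is a contraction, and a weighted Cauchy--Schwarz step can then transfer this structural bound, together with the concentration of $R_{ij}^2$ near the diagonal, into the desired estimate on $\|R\|_F$ itself.
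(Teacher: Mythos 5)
Your plan contains the right ingredients (Gamma-ratio bounds, reindexing by $k=j-i$, the sum $\sum_k(2k\pm1)^{-2}=\pi^2/4$), but the decisive step is missing and the one quantitative claim that would close the naive argument is false. The prefactor $F(i,j)=\frac{16(i+\frac12)(j-\frac34)^2}{(j-1)(i+j-1)^2}$ is \emph{not} bounded by $4$ where it matters: at the two dominant near-diagonal entries one has $F(i,i+1)=\frac{4(i+\frac12)(i+\frac14)^2}{i^3}>4$ for every $i$ (and similarly $F(i,i)>4$), while at $i=1,\ j=2$ it is about $9.4$. Since the clean comparison $\frac{4}{\pi^2}\cdot\frac{\pi^2}{4}=1$ has no slack, any excess at the two big entries pushes a term-by-term bound strictly above $1$, so the per-row estimate $\sum_j R_{ij}^2\le 1$ cannot be obtained by separating the prefactor from $(2j-2i-1)^{-2}$; it requires exploiting cancellation between the near-diagonal excess and the far-field deficit. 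That cancellation is precisely what the paper supplies, via the exact identity $\frac{1}{(2m-2j-1)^2}-\frac{1}{(2j+2m-2)^2}=\frac{(4j-1)(4m-3)}{(2m-2j-1)^2(2j+2m-2)^2}$, which turns the main part of each sum into a telescoping-type quantity worth $1+\frac{\pi^2}{8}$ (resp.\ $\frac{\pi^2}{8}-1$) plus explicit $O(1/m)$ corrections; moreover the paper does not bound full rows but uses a hook decomposition (column $m$ down to the diagonal plus row $m$ strictly left of it, each hook shown to sum to at most $1$), which partitions the matrix exactly and keeps both large entries in a single, carefully estimated block. Your deferral of "a careful partial-fraction decomposition or summation by parts" is exactly this core of the proof, so as written the argument is a program rather than a proof.

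Your alternative route is also not yet a proof. The identities $\sum_j\frac{R_{ij}^2}{4j-3}=\frac{1}{4i-1}$ and $\sum_i(4i-1)R_{ij}^2=4j-3$ are correct (they are Parseval for the orthonormal bases $\{\sqrt{4j-3}\,P_{2j-2}\}$ and $\{\sqrt{4i-1}\,P_{2i-1}\}$ of $L^2(0,1)$, and the associated change-of-basis matrix is indeed unitary), but they only control the \emph{weighted} row and column sums $\sum_j\frac{4i-1}{4j-3}R_{ij}^2=1$. Passing from these to the unweighted Frobenius norm of the truncation requires knowing that the mass of $R_{ij}^2$ sits where the weight $\frac{4j-3}{4i-1}$ is close to $1$, i.e.\ the same near-diagonal concentration estimates you have not established; an unspecified "weighted Cauchy--Schwarz step" with weights that reach size $O(N)$ off the diagonal will degrade the constant and does not yield $\|R\|_F^2\le\frac{N+1}{2}$. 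So the structural observation is nice and could perhaps be developed, but in its present form there is a genuine gap at exactly the point the paper's explicit hook-plus-partial-fraction computation resolves.
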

	\begin{proof}
		We show that for all $m \ge 1$,
		\begin{equation}
		\sum_{j = 1}^{m} |R_{j, m}|^2 + \sum_{j = 1}^{m-1} |R_{m, j}|^2  \le 1 \,.
		\end{equation} 
		The above estimate is true for $m=1$ since $R_{1,1} = \frac{1}{2}$, we only focus on the cases that $m\ge 2$.
		Reformulate $R_{j, m}$ by the Gamma function as
		\begin{equation}
		|R_{j,m}|  = \frac{\Gamma(j+\frac{1}{2})\Gamma(m-\frac{1}{2})}{\pi \Gamma(j)\Gamma(m) (j+m-1)} \frac{(4m-3)}{2m-2j -1 }\,.
		\end{equation}
		From the Chu's Double Inequality~\cite{chu1962modified}  
		\begin{equation}
		\sqrt{x - \frac{1}{4}} < \frac{\Gamma(x + \frac{1}{2})}{\Gamma(x)} < \frac{x}{\sqrt{x + \frac{1}{4}}}\,,
		\end{equation}
		the following estimates hold,
		\begin{equation}
		\frac{\Gamma(j+\frac{1}{2})}{\Gamma(j)} < \frac{j}{\sqrt{j+\frac{1}{4}}}\,,\quad \frac{\Gamma(m-\frac{1}{2})}{\Gamma(m)} < \frac{1}{(m-\frac{3}{4})^{1/2}}\,,
		\end{equation}
		we can deduce the estimate 
		\begin{equation}
		\begin{aligned}
		\sum_{j = 1}^{m} |R_{j, m}|^2 
		&= \frac{1}{\pi^2}\sum_{j=1}^m \left( \frac{\Gamma(j+\frac{1}{2})\Gamma(m-\frac{1}{2})}{ \Gamma(j)\Gamma(m) } \right)^2 \frac{(4m-3)^2}{(j+m-1)^2} \frac{1}{(2m-2j-1)^2} \\
		&\le \frac{1}{\pi^2} \sum_{j=1}^m \frac{(4m-3)^2 (j-\frac{1}{4} + \frac{1}{4(1+4j)})}{(m-\frac{3}{4})(j+m-1)^2 (2m-2j-1)^2} \\ 
		&\le \frac{4}{\pi^2}   \sum_{j=1}^m  \frac{(4j-1 + \frac{1}{(1+4j)}) (4m-3)}{(2j+2m-2)^2 (2m-2j-1)^2} \\
		&= \frac{4}{\pi^2} \sum_{j=1}^m \left[\frac{1}{(2m-2j-1)^2} - \frac{1}{(2j+2m-2)^2}\right] \\&\quad+ \frac{4}{\pi^2} \sum_{j=1}^m \frac{(4m-3)}{(1+4j)(2j+2m-2)^2 (2m-2j-1)^2} \\
		&\le \frac{4}{\pi^2}\left[\left(1 + \frac{\pi^2}{8} -  \frac{1}{4m}\right)+\frac{1}{5 m(2m-3)^2} +  \frac{1}{9  m} \sum_{j=2}^m \frac{1}{ (2m-2j-1)^2} \right]\\
		&\le  \frac{4}{\pi^2} \left[\left(1 + \frac{\pi^2}{8}\right)+ \left(\frac{11}{180} + \frac{\pi^2}{72}\right) \frac{1}{m} \right]\,.
		\end{aligned}
		\end{equation}
		The other part can be estimated in a similar way,
		\begin{equation}
		\begin{aligned}
		\sum_{j=1}^{m-1} |R_{m,j}|^2 
		&=\frac{1}{\pi^2}\sum_{j=1}^{m-1} \left(\frac{\Gamma(m+\frac{1}{2})\Gamma(j-\frac{1}{2})}{ \Gamma(m)\Gamma(j)} \right)^2 \frac{(4j-3)^2}{(j+m-1)^2} \frac{1}{(2j-2m-1)^2} \\
		&\le \frac{1}{\pi^2} \sum_{j=1}^{m-1} \frac{(4j-3)^2 (m-\frac{1}{4} + \frac{1}{4(1+4m)})}{(j-\frac{3}{4})(j+m-1)^2 (2j-2m-1)^2} \\
		&\le \frac{4 }{\pi^2}\sum_{j=1}^{m-1} \frac{(4m-1+\frac{1}{1+4m})(4j-3)}{(2j+2m-2)^2 (2j-2m-1)^2} \\ 
		&= \frac{4}{\pi^2} \sum_{j=1}^{m-1}\left[ \frac{1}{(2m+1-2j)^2} - \frac{1}{(2j+2m-2)^2}  \right] \\
		&\quad + \frac{4}{\pi^2} \sum_{j=1}^{m-1} \frac{4j-3}{(1+4m)(2j+2m-2)^2(2m+1 - 2j)^2} \\
		&\le \frac{4}{\pi^2}\left[ \left( \frac{\pi^2}{8} - 1 - \frac{1}{4m}  \right) + \sum_{j=1}^{m-1}\frac{1}{4m^2(2m+1 - 2j)^2} \right]\\
		&\le \frac{4}{\pi^2}\left[\left( \frac{\pi^2}{8} - 1 -\frac{1}{4m} \right)+\left(\frac{\pi^2}{8} -1\right) \frac{1}{4m^2}\right]\,.\\
		\end{aligned}
		\end{equation}
		Hence we can estimate that
		\begin{equation}\label{EQ:INEQ LEM APDX}
		\sum_{j = 1}^{m} |R_{j, m}|^2 + \sum_{j = 1}^{m-1} |R_{m, j}|^2 \le 1 - \frac{4}{\pi^2}\left(\frac{17}{90} - \frac{\pi^2}{72}\right)\frac{1}{m} + \frac{1}{\pi^2}\left(\frac{\pi^2}{8} - 1\right) \frac{1}{m^2}\,,
		\end{equation}
		the above estimate is strictly less than $1$ for $m\ge 2$, then the Frobenius norm's square of $R$ is estimated by
		\begin{equation}
		\|R\|_{F}^2 = \frac{1}{2} + \sum_{m=2}^{(N+1)/2} \left(\sum_{j = 1}^{m} |R_{j, m}|^2 + \sum_{j = 1}^{m-1} |R_{m, j}|^2\right) \le \frac{N}{2}\,.
		\end{equation}
		\end{proof}
    \begin{lemma}\label{LEM: SQ BD}
        Let $\bs_k$ the $k$-th row of the matrix \note{$M^{-1}$} in~\eqref{EQ:T MAT}, and matrix $\bQ$ is defined in~\eqref{EQ: P Q}, then 
        \begin{equation}
        \begin{aligned}
        &\|\bs_1\cdot \bQ\|_{\ell^p} = \cO(N^{3/2+1/p})\,.
        \end{aligned}
        \end{equation}
    \end{lemma}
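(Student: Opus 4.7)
The plan is to expand $(\bs_1\cdot\bQ)_j$ explicitly using the definition of $\bQ$ in terms of the rank-one pieces $\bu_{k,n}=\bs_k^T\bs_k$, then feed in the closed-form entries of $M^{-1}$ from Lemma~\ref{LEM: M INV}. Since $\bu_{k,n}$ has $l$-th entry $s_{k,n}s_{k,l}$, swapping the order of summation gives
\begin{equation*}
(\bs_1\cdot \bQ)_j \;=\; \sum_{k=2}^{(N+1)/2}(4k-3)(1-g^{2k-2})\,s_{k,j}\,A_k,\qquad
A_k := \sum_{n=k}^{(N+1)/2} s_{1,n}(4n-1)(1-g^{2n-1})s_{k,n},
\end{equation*}
where I used $s_{k,n}=0$ for $n<k$. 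The whole argument reduces to estimating $|A_k|$ and then the outer sum, entry by entry in $j$.

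For the size estimates I would invoke exactly the Gautschi inequality already used in Corollary~\ref{COR:BOUND}: combining $|s_{k,l}|=\frac{1}{2k-1}\frac{(2l-2)!!}{(2l-1)!!}\frac{(2k-1)!!}{(2k-2)!!}$ with the Gamma-ratio bound gives the uniform estimate $|s_{k,l}|\le C/\sqrt{kl}$ for $l\ge k\ge 1$. Since $(1-g^{2n-1})\le 2$ and $(4n-1)\le 4n$, each term in $A_k$ is bounded by $C\,\tfrac{1}{\sqrt{n}}\!\cdot\! n\!\cdot\!\tfrac{1}{\sqrt{kn}}=C/\sqrt{k}$, and there are at most $(N+1)/2$ terms, so $|A_k|\le C\,N/\sqrt{k}$. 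Plugging this into the outer sum and using $|s_{k,j}|\le C/\sqrt{kj}$, each summand is $O\!\big(k\cdot \tfrac{1}{\sqrt{kj}}\cdot \tfrac{N}{\sqrt{k}}\big)=O(N/\sqrt{j})$; since $s_{k,j}=0$ for $k>j$, the effective range is $k\le \min(j,(N+1)/2)$, so
\begin{equation*}
|(\bs_1\cdot \bQ)_j| \;\le\; C\,N\sqrt{j},\qquad 1\le j\le (N+1)/2.
\end{equation*}

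Taking $\ell^p$ norms then yields
\begin{equation*}
\|\bs_1\cdot \bQ\|_{\ell^p}^{p} \;\le\; C^{p}\,N^{p}\!\!\sum_{j=1}^{(N+1)/2} j^{p/2} \;=\; O\!\left(N^{\,3p/2+1}\right),
\end{equation*}
which is precisely the advertised $\cO(N^{3/2+1/p})$. I do not anticipate a real obstacle: the argument is a mechanical combinatorial estimate, and every bound invoked (Gautschi's inequality and the closed form for $M^{-1}$) is already available. The one point of care is to track that the outer index $k$ starts at $2$ (from the definition of $\bq_n$) and that $s_{k,\cdot}$ is upper triangular, so both the $k$ and $n$ sums are truly over $\{k\le n\}$ and $\{k\le j\}$, which is what allows the $\sqrt{j}$ growth rather than something larger.
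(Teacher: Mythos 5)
Your proof is correct and follows essentially the same route as the paper: the closed form of $M^{-1}$ together with Gautschi's inequality gives $|s_{k,l}|\lesssim 1/\sqrt{kl}$, and elementary counting over the triangular index ranges produces the stated rate. The only organizational difference is that you swap the $k$ and $n$ sums to get the entrywise bound $|(\bs_1\cdot \bQ)_j|\lesssim N\sqrt{j}$ and sum $p$-th powers directly, while the paper first bounds $\|\bs_1\cdot \bQ\|_{\ell^\infty}=\cO(N^{3/2})$ and then multiplies by the length factor $\left(\hN\right)^{1/p}$; both yield $\cO(N^{3/2+1/p})$.
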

    \begin{proof}
        Let $s_{k,j}$ denote the $j$-th entry of vector $\bs_k$. Combine the Lemma~\ref{LEM: M INV} and Gautschi's inequality~\cite{gautschi1959some}, we have
        \begin{equation}
        |s_{k,j}| < \frac{1}{2k-1}\frac{\sqrt{k+\frac{1}{2}}}{\sqrt{j-\frac{1}{2}}}\,.
        \end{equation}
        For the matrix $\bQ$,  we denote its $(n,j)$-th entry by $Q_{nj}$, which can be estimated by
        \begin{equation}
        \begin{aligned}
           |Q_{nj}| &\le (4n-1)\sum_{k\ge 2}^{\min(j, n)} (4k-3) |s_{k, n} s_{k, j} |\le (4n-1) \sum_{k\ge 2}^{\min(j, n)}  \frac{(4k-3)}{(2k-1)^2}\frac{k+\frac{1}{2}}{\sqrt{j-\frac{1}{2}}\sqrt{n-\frac{1}{2}}} \\
           &\le \frac{(4n-1)}{\sqrt{j-\frac{1}{2}}\sqrt{n-\frac{1}{2}}} \sum_{k\ge 2}^{\min(j, n)} \frac{(4k-3)(k+\frac{1}{2})}{(2k-1)^2}\,.
        \end{aligned}
        \end{equation}
        Since $(4k-3)(k+\frac{1}{2}) \le  2(2k-1)^2$ for all $k$, we will have $|Q_{n,j}|\le \frac{(8n-2)}{\sqrt{n-\frac{1}{2}}\sqrt{j-\frac{1}{2}}}(\min(j,n)-1)$, therefore, by simple calculations, $\|\bs_1\cdot \bQ\|_{\ell^{\infty}}$ is bounded by
        \begin{equation}
        \begin{aligned}
         \|\bs_1\cdot \bQ\|_{\ell^{\infty}} &\le \sup_{j\ge 1} \sum_{n\ge 1} |s_{1, n}| |Q_{n,j}| \le  \sup_{j\ge 1}\sum_{n \ge 1} \frac{(8n-2)(\min(j,n)-1) }{(n-\frac{1}{2})\sqrt{j-\frac{1}{2}}}\\
         & = \cO(N^{3/2})\,.
        \end{aligned}
        \end{equation}
        Then the $\ell^p$ estimate is $$\|\bs_1\cdot \bQ\|_{\ell^{p}}\le  \|\bs_1\cdot \bQ\|_{\ell^\infty} \left(\hN\right)^{1/p} = \cO(N^{3/2+1/p})\,. $$
    \end{proof}
\bibliographystyle{siam}
\bibliography{main}

\begin{thebibliography}{10}

\bibitem{agranovsky2007reconstruction}
{\sc M.~Agranovsky, P.~Kuchment, and L.~Kunyansky}, {\em On reconstruction
  formulas and algorithms for the thermoacoustic and photoacoustic tomography},
  Preprint,  (2007).

\bibitem{ammari2012photoacoustic}
{\sc H.~Ammari, E.~Bretin, V.~Jugnon, and A.~Wahab}, {\em Photoacoustic imaging
  for attenuating acoustic media}, in Mathematical modeling in biomedical
  imaging II, Springer, 2012, pp.~57--84.

\bibitem{bal2013hybrid}
{\sc G.~Bal}, {\em Hybrid inverse problems and internal functionals}, Inverse
  problems and applications: inside out. II, 60 (2013), pp.~325--368.

\bibitem{bal2010inverse}
{\sc G.~Bal, A.~Jollivet, and V.~Jugnon}, {\em Inverse transport theory of
  photoacoustics}, Inverse Problems, 26 (2010), p.~025011.

\bibitem{bal2011multi}
{\sc G.~Bal and K.~Ren}, {\em Multi-source quantitative photoacoustic
  tomography in a diffusive regime}, Inverse Problems, 27 (2011), p.~075003.

\bibitem{beard2011biomedical}
{\sc P.~Beard}, {\em Biomedical photoacoustic imaging}, Interface focus, 1
  (2011), pp.~602--631.

\bibitem{chu1962modified}
{\sc J.~Chu}, {\em A modified wallis product and some applications}, The
  American Mathematical Monthly, 69 (1962), pp.~402--404.

\bibitem{chu2009light}
{\sc M.~Chu, K.~Vishwanath, A.~D. Klose, and H.~Dehghani}, {\em Light transport
  in biological tissue using three-dimensional frequency-domain simplified
  spherical harmonics equations}, Physics in Medicine \& Biology, 54 (2009),
  p.~2493.

\bibitem{cox2009challenges}
{\sc B.~Cox, J.~Laufer, and P.~Beard}, {\em The challenges for quantitative
  photoacoustic imaging}, in Photons Plus Ultrasound: Imaging and Sensing 2009,
  vol.~7177, International Society for Optics and Photonics, 2009, p.~717713.

\bibitem{ding2015one}
{\sc T.~Ding, K.~Ren, and S.~Vall{\'e}lian}, {\em A one-step reconstruction
  algorithm for quantitative photoacoustic imaging}, Inverse Problems, 31
  (2015), p.~095005.

\bibitem{fan2019fast}
{\sc Y.~Fan, J.~An, and L.~Ying}, {\em Fast algorithms for integral
  formulations of steady-state radiative transfer equation}, Journal of
  Computational Physics, 380 (2019), pp.~191--211.

\bibitem{frederick2018image}
{\sc C.~Frederick, K.~Ren, and S.~Vall{\'e}lian}, {\em Image reconstruction in
  quantitative photoacoustic tomography with the simplified p\_2
  approximation}, SIAM Journal on Imaging Sciences, 11 (2018), pp.~2847--2876.

\bibitem{gao2013analysis}
{\sc H.~Gao and H.~Zhao}, {\em Analysis of a numerical solver for radiative
  transport equation}, Mathematics of computation, 82 (2013), pp.~153--172.

\bibitem{gautschi1959some}
{\sc W.~Gautschi}, {\em Some elementary inequalities relating to the gamma and
  incomplete gamma function}, J. Math. Phys, 38 (1959), pp.~77--81.

\bibitem{gelbard1960application}
{\sc E.~M. Gelbard}, {\em Application of spherical harmonics method to reactor
  problems}, Bettis Atomic Power Laboratory, West Mifflin, PA, Technical Report
  No. WAPD-BT-20,  (1960).

\bibitem{gungor2010erratum}
{\sc A.~D. G{\"u}ng{\"o}r}, {\em Erratum to ‘‘an upper bound for the
  condition number of a matrix in spectral norm’’[j. comput. appl. math.
  143 (2002) 141--144]}, Journal of Computational and Applied Mathematics, 234
  (2010), p.~316.

\bibitem{haltmeier2005filtered}
{\sc M.~Haltmeier, T.~Schuster, and O.~Scherzer}, {\em Filtered backprojection
  for thermoacoustic computed tomography in spherical geometry}, Mathematical
  methods in the applied sciences, 28 (2005), pp.~1919--1937.

\bibitem{hristova2009time}
{\sc Y.~Hristova}, {\em Time reversal in thermoacoustic tomography—an error
  estimate}, Inverse Problems, 25 (2009), p.~055008.

\bibitem{klose2006light}
{\sc A.~D. Klose and E.~W. Larsen}, {\em Light transport in biological tissue
  based on the simplified spherical harmonics equations}, Journal of
  Computational Physics, 220 (2006), pp.~441--470.

\bibitem{klose2006simplified}
\leavevmode\vrule height 2pt depth -1.6pt width 23pt, {\em Simplified spherical
  harmonics methods for modeling light transport in biological tissue}, in
  Biomedical Topical Meeting, Optical Society of America, 2006, p.~MH3.

\bibitem{kuchment2007mathematics}
{\sc P.~Kuchment and L.~Kunyansky}, {\em Mathematics of thermoacoustic and
  photoacoustic tomography}, preprint,  (2007).

\bibitem{li2020inverse}
{\sc W.~Li, Y.~Yang, and Y.~Zhong}, {\em Inverse transport problem in
  fluorescence ultrasound modulated optical tomography with angularly averaged
  measurements}, Inverse Problems, 36 (2020), p.~025011.

\bibitem{mamonov2012quantitative}
{\sc A.~V. Mamonov and K.~Ren}, {\em Quantitative photoacoustic imaging in
  radiative transport regime}, arXiv preprint arXiv:1207.4664,  (2012).

\bibitem{marshall1979inequalities}
{\sc A.~W. Marshall, I.~Olkin, and B.~C. Arnold}, {\em Inequalities: theory of
  majorization and its applications}, vol.~143, Springer, 1979.

\bibitem{mcclarren2010theoretical}
{\sc R.~G. McClarren}, {\em Theoretical aspects of the simplified pn
  equations}, Transport Theory and Statistical Physics, 39 (2010), pp.~73--109.

\bibitem{mclean2000strongly}
{\sc W.~McLean and W.~C.~H. McLean}, {\em Strongly elliptic systems and
  boundary integral equations}, Cambridge university press, 2000.

\bibitem{piazza2002upper}
{\sc G.~Piazza and T.~Politi}, {\em An upper bound for the condition number of
  a matrix in spectral norm}, Journal of Computational and Applied Mathematics,
  143 (2002), pp.~141--144.

\bibitem{ren2013hybrid}
{\sc K.~Ren, H.~Gao, and H.~Zhao}, {\em A hybrid reconstruction method for
  quantitative pat}, SIAM Journal on Imaging Sciences, 6 (2013), pp.~32--55.

\bibitem{ren2015inverse}
{\sc K.~Ren, R.~Zhang, and Y.~Zhong}, {\em Inverse transport problems in
  quantitative pat for molecular imaging}, Inverse Problems, 31 (2015),
  p.~125012.

\bibitem{ren2019fast}
\leavevmode\vrule height 2pt depth -1.6pt width 23pt, {\em A fast algorithm for
  radiative transport in isotropic media}, Journal of Computational Physics,
  399 (2019), p.~108958.

\bibitem{ren2013quantitative}
{\sc K.~Ren and H.~Zhao}, {\em Quantitative fluorescence photoacoustic
  tomography}, SIAM Journal on Imaging Sciences, 6 (2013), pp.~2404--2429.

\bibitem{ren2019separability}
{\sc K.~Ren, H.~Zhao, and Y.~Zhong}, {\em Separability of the kernel function
  in an integral formulation for anisotropic radiative transfer equation},
  arXiv preprint arXiv:1908.10467,  (2019).

\bibitem{ren2020unique}
{\sc K.~Ren and Y.~Zhong}, {\em Unique determination of absorption coefficients
  in a semilinear transport equation}, arXiv preprint arXiv:2007.09516,
  (2020).

\bibitem{scherzer2010handbook}
{\sc O.~Scherzer}, {\em Handbook of mathematical methods in imaging}, Springer
  Science \& Business Media, 2010.

\bibitem{stefanov2009thermoacoustic}
{\sc P.~Stefanov and G.~Uhlmann}, {\em Thermoacoustic tomography with variable
  sound speed}, Inverse Problems, 25 (2009), p.~075011.

\bibitem{tyrtyshnikov1992singular}
{\sc E.~E. Tyrtyshnikov}, {\em Singular values of cauchy-toeplitz matrices},
  Linear algebra and its applications, 161 (1992), pp.~99--116.

\bibitem{wang2017photoacoustic}
{\sc L.~V. Wang}, {\em Photoacoustic imaging and spectroscopy}, CRC press,
  2017.

\bibitem{wright2006reconstruction}
{\sc S.~Wright, M.~Schweiger, and S.~Arridge}, {\em Reconstruction in optical
  tomography using the pn approximations}, Measurement Science and Technology,
  18 (2006), p.~79.

\bibitem{ziemer2012weakly}
{\sc W.~P. Ziemer}, {\em Weakly differentiable functions: Sobolev spaces and
  functions of bounded variation}, vol.~120, Springer Science \& Business
  Media, 2012.

\end{thebibliography}
\end{document}